\newcommand{\J}{\ensuremath{\mathcal{J}}}
\newcommand{\R}{\ensuremath{\mathcal{R}}}
\renewcommand{\L}{\ensuremath{\mathcal{L}}}
\renewcommand{\H}{\ensuremath{\mathcal{H}}}
\newcommand{\freeprod}{\mathop{\mbox{\Large$*$}}}
\renewcommand{\geq}{\geqslant}
\renewcommand{\leq}{\leqslant}
\renewcommand{\ge}{\geqslant}
\let\op=\llbracket
\let\cl=\rrbracket
\def\pv#1{\ensuremath{\mathsf{#1}}}
\def\Om#1#2{\ensuremath{\overline{\Omega}_{#1}{\pv{#2}}}}
\def\loc1{\ell \pv{I}}
\def\krho{{\mathrm{KR}}}
\let\cal=\mathcal
\def\Cl#1{\ensuremath{\cal{#1}}}
\newcommand\malcev{%
\mathbin{\hbox{$\bigcirc$\kern-9pt\raise1.2pt\hbox{\scriptsize$m$}\,}}}
\newcommand\smalcev{%
\mathbin{\hbox{$\bigcirc$\kern-8pt\raise1.2pt\hbox{\tiny$m$}\,}}}
\newtheorem{Thm}{Theorem}[section]
\newtheorem{Prop}[Thm]{Proposition}
\newtheorem{Lemma}[Thm]{Lemma}
\newtheorem{Problem}[Thm]{Problem}
\theoremstyle{definition}
\newtheorem{Def}[Thm]{Definition}
\newtheorem{Remark}[Thm]{Remark}
\newtheorem{Examp}[Thm]{Example}
\numberwithin{equation}{section}
\begin{document}

\title { Equidivisibility and profinite coproduct } %
\thanks{The work of J.~Almeida, was partially supported by CMUP, which
  is funded by FCT (Portugal) by national funds through the project
  UID/MAT/00144/2020. It was developed in part at Masaryk University,
  whose hospitality is gratefully acknowledged, with the support of
  the FCT sabbatical scholarship SFRH/BSAB/142872/2018. %
  The work of A.~Costa was carried out in part at Masaryk University
  and at City College of New York. The hospitality of these
  institutions is gratefully acknowledged. The first visit had the
  support of the research grant 19-12790S of the Grant Agency of the
  Czech Republic, and the second visit had the support of the FCT
  sabbatical scholarship SFRH/BSAB/150401/2019. His work was also
  supported by the Centre for Mathematics of the University of Coimbra
  - UIDB/00324/2020, funded by the Portuguese Government through
  FCT/MCTES%
}

\author{Jorge Almeida}
\address{CMUP, Departamento de Matem\'atica,
  Faculdade de Ci\^encias, Universidade do Porto, 
  Rua do Campo Alegre 687, 4169-007 Porto, Portugal.}
\email{jalmeida@fc.up.pt}

\author{Alfredo Costa}
\address{University of Coimbra, CMUC, Department of Mathematics,
  Apartado 3008, EC Santa Cruz,
  3001-501 Coimbra, Portugal.}
\email{amgc@mat.uc.pt}

\begin{abstract}
  The aim of this work is to investigate the behavior of
  equidivisibility under coproduct in the category of pro-\pv V
  semigroups, where \pv V is a pseudovariety of finite semigroups.
  Exploring the relationship with the two-sided Karnofsky--Rhodes
  expansion, the notions of KR-cover and strong KR-cover for profinite
  semigroups are introduced. The former is stronger than
  equidivisibility and the latter provides a characterization of
  equidivisible profinite semigroups with an extra mild condition,
  so-called letter super-cancellativity. Furthermore, under the
  assumption that \pv V is closed under two-sided Karnofsky--Rhodes expansion,
  closure of some classes of equidivisible pro-\pv V semigroups under
  (finite) \pv V-coproduct is established.
\end{abstract}

\keywords{Profinite semigroup, equidivisible, free product, coproduct,
Karnofsky--Rhodes expansion}

\makeatletter
\@namedef{subjclassname@2020}{%
  \textup{2020} Mathematics Subject Classification}
\makeatother
\subjclass[2020]{Primary 20M07, 20M05}

\maketitle


\section{Introduction}
\label{sec:introduction}

A semigroup is \emph{equidivisible} if any two factorizations of every
element have a common refinement. The class of equidivisible
semigroups was introduced and studied in~\cite{McKnight&Storey:1969}
as a natural common generalization of free semigroups and completely
simple semigroups. More recently, this property has appeared as a
useful tool in profinite semigroup theory, beginning
with~\cite{Almeida&ACosta:2007a,Henckell&Rhodes&Steinberg:2010b},
where it was noted, independently, that for several important
pseudovarieties of finite semigroups (like that of all finite
semigroups, or that of all finite aperiodic semigroups), the
corresponding finitely generated relatively free profinite semigroups
are equidivisible. Other recent papers where the equidivisibility of
relatively free profinite semigroups is applied or deserves some kind
of attention
include~\cite{Almeida&Klima:2020a,Almeida&ACosta&Costa&Zeitoun:2019,Gool&Steinberg:2019}.
A complete characterization of the pseudovarieties for which the
corresponding finitely generated relatively free profinite semigroups
are equidivisible appears in~\cite{Almeida&ACosta:2017}.

As observed in~\cite{McKnight&Storey:1969}, the class of equidivisible
semigroups is closed under taking free products, that is, coproducts
in the category of semigroups. In this paper, we investigate an analog
for profinite semigroups. For that purpose, we introduce \pv
V-coproducts of pro-\pv V semigroups with respect to a pseudovariety
of semigroups \pv V, extending what was done in
\cite{Ribes&Zalesskii:2010} for the pseudovariety of finite groups. We
give simple conditions on \pv V guaranteeing that the free product of
pro-\pv V semigroups embeds naturally in their \pv V-coproduct.

We introduce a restricted form of projectivity. The profinite
semigroups with this property are called KR-covers and turn out to be
equidivisible semigroups. We show that the class of pro-\pv V
KR-covers is closed under \pv V-coproduct when \pv V is closed under
two-sided Karnofsky--Rhodes expansion. This expansion is a two-sided
analog of the so-called Karnofsky--Rhodes expansion
\cite{Elston:1999,Rhodes&Steinberg:2001}, which has recently found new
applications beyond semigroup theory
\cite{Rhodes&Schilling:2019b,Rhodes&Schilling:2019a}.

One of the motivations for searching for new examples of equidivisible
profinite semigroups comes from the fact that several results
in~\cite{Almeida&ACosta&Costa&Zeitoun:2019} were stated for
equidivisible profinite semigroups, frequently with the additional
requirement that they satisfy a certain cancellation property.
Semigroups satisfying this cancellation property are called letter
super-cancellative in~\cite{Almeida&ACosta:2017} and finitely
cancellable in~\cite{Almeida&ACosta&Costa&Zeitoun:2019}. They include
the finitely generated relatively free profinite semigroups that are
equidivisible but not completely simple.
In this paper we provide a characterization of the class of all
finitely generated equidivisible letter super-cancellative profinite
semigroups, involving the notion of strong KR-cover, which we
introduce. We show that the subclass consisting of pro-\pv V
semigroups is also closed under taking finite \pv V-coproducts when
\pv V is closed under two-sided Karnofsky--Rhodes expansion. We also
exhibit an element of the class that is not relatively free (the
existence of such an example was left open
in~\cite{Almeida&ACosta&Costa&Zeitoun:2019}).

\section{Preliminaries}
\label{sec:prelims}

The reader is referred to standard references for general background
on profinite semigroups and pseudovarieties
\cite{Almeida:1994a,Almeida:2003cshort,Rhodes&Steinberg:2009qt}. For
the remainder of the section, we introduce briefly specific notions
and terminology needed in the sequel.

For a semigroup $S$, let $S^I$ be the semigroup which is obtained by
adjoining a new identity element, denoted $I$, even if $S$ is itself a
monoid. The semigroup $S$ is \emph{equidivisible} when, for every
$u,v,x,y\in S$, the equality $uv=xy$ implies the existence of $t\in
S^I$ such that $x=ut$ and $ty=v$, or $xt=u$ and $y=tv$. Equivalently,
any two factorizations of the same element of~$S$ have a common
refinement.

A \emph{pseudovariety (of semigroups)} is a class of finite semigroups
closed under taking homomorphic images, subsemigroups and finite
direct products. For the remainder of the paper, \pv V denotes an
arbitrary pseudovariety.

A \emph{topological semigroup} is a semigroup $S$ endowed with a
topology such that the semigroup multiplication $S\times S\to S$ is
continuous. We then say that a mapping $\varphi:X\to S$, with $X$ a
nonempty set, is a \emph{generating mapping} if $\varphi(X)$ generates
a dense subsemigroup of~$S$. When the generating mapping is
understood, we may simply say that $S$ is \emph{$X$-generated}. When
no topology is mentioned, we consider semigroups endowed with the
discrete topology.

Throughout the paper, when we consider compact spaces, we assume that
they are Hausdorff. A \emph{pro-\pv V semigroup} is a compact
semigroup which is residually \pv V. A \emph{profinite semigroup} is a
pro-\pv S semigroup for the pseudovariety \pv S of all finite
semigroups. Since in a finite semigroup, for every element $s$ the
sequence $(s^{n!})_n$ converges, and the limit is idempotent, the same
holds in an arbitrary profinite semigroup; the limit of the sequence
is denoted $s^\omega$. More generally, the sequence
$(s^{n!+k})_{n\ge|k|}$ converges for every integer $k$ and the limit
is denoted $s^{\omega+k}$.

Pro-\pv V semigroups may be
alternatively described as the inverse limits of inverse systems of
finite semigroups. Here, by an \emph{inverse system}, we mean a family
$(S_i)_{i\in I}$ of compact semigroups, where $I$ is an upper directed
set, together with continuous homomorphisms $\varphi_{i,j}:S_i\to S_j$
whenever $i\ge j$ such that $\varphi_{i,i}$ is the identity mapping
and $\varphi_{j,k}\circ\varphi_{i,j}=\varphi_{i,k}$ whenever $i\ge
j\ge k$. We say that the inverse system is an \emph{inverse quotient
  system} if every mapping $\varphi_i$ is onto. The inverse limit
$\varprojlim_{i\in I}S_i$ of the inverse system $(S_i)_{i\in I}$ is
the subsemigroup of the direct product $\prod_{i\in I}S_i$ consisting
of the elements $(s_i)_{i\in I}$ of such that $\varphi_{i,j}(s_i)=s_j$
whenever $i\ge j$. The restriction to $\varprojlim_{i\in I}S_i$ of the
$j$-component projection is a continuous homomorphism
$\varphi_j:\varprojlim_{i\in I}S_i\to S_j$. In case the $S_i$ are
pro-\pv V semigroups, so is $\varprojlim_{i\in I}S_i$. In the case of
an inverse quotient system, we also say that $\varprojlim_{i\in I}S_i$
is an \emph{inverse quotient limit}; in this case, the mappings
$\varphi_i$ are onto.

Given a nonempty set $X$, there is a \emph{free pro-\pv V semigroup on
  $X$} given by a mapping $\iota:X\to\Om XV$ with the following universal
property: for every mapping $\varphi:X\to S$ into a pro-\pv V
semigroup $S$, there is a unique continuous homomorphism
$\hat{\varphi}:\Om XV\to S$ such that
$\hat{\varphi}\circ\iota=\varphi$. Such semigroups are well known to
exist and may be constructed as inverse limits of $X$-generated
semigroups from~\pv V or, in the case $X$ is finite, as completions of
the free semigroup $X^+$ on $X$ with respect to a natural
pseudometric. From the universal property of \Om XV it follows
immediately that $\iota$ is a generating mapping and, unless \pv V is
the trivial pseudovariety, consisting only of singleton semigroups,
the mapping $\iota$ is injective and we then identify each element
$x\in X$ with its image $\iota(x)$.

A homomorphism $\psi:S\to T$ of finite semigroups is said to be a
\emph{$\pv V$-morphism} if $\varphi^{-1}(e)\in\pv V$ for every
idempotent $e$ of $T$. For pseudovarieties \pv V and \pv W, their
\emph{Mal'cev product} is the pseudovariety $\pv{V\malcev W}$
generated by the class of all finite semigroups $S$ for which there is
a \pv V-morphism $S\to T$ with $T\in\pv W$. For example, it is well
known that $\pv J=\pv{N\malcev Sl}$, where \pv J, \pv N and \pv{Sl}
are, respectively, the pseudovarieties of all finite \Cl J-trivial
semigroups nilpotent semigroups and semilattices.

\section{Coproduct of profinite semigroups}
\label{sec:copr-prof-semigr}

Let \pv V be a pseudovariety of finite semigroups. Given a nonempty family
$(S_i)_{i\in I}$ of pro-\pv V semigroups,
their \emph{\pv V-coproduct} is a pro-\pv V semigroup $S$ together
with a collection of continuous homomorphisms $\varphi_i:S_i\to S$
such that the following universal property holds: for every pro-\pv V
semigroup $T$ and every collection of continuous homomorphisms
$\psi_i:S_i\to T$, there is a unique continuous homomorphism
$\psi:S\to T$ such that Diagram~\ref{eq:definition-of-coproduct}
commutes for every $i\in I$.
\begin{equation}
  \label{eq:definition-of-coproduct}
  \begin{split}
    \xymatrix@C=3mm{
  &S_i \ar[ld]_{\varphi_i} \ar[rd]^{\psi_i} & \\
  S \ar@{-->}[rr]^\psi
  && **[r]{T .} }
  \end{split}
\end{equation}

Note that, by the usual ``abstract nonsense'', if such a pro-\pv V
semigroup~$S$ exists, then it is unique up to isomorphism. It is then
denoted $\coprod_{i\in I}^{\pv V}S_i$. In the case of a finite family
$(S_i)_{i=1,\ldots,n}$ we also write $S_1\amalg^{\pv
  V}\cdots\amalg^{\pv V} S_n$ to denote $\coprod_{i=1}^{\pv V}S_i$.

\subsection{Construction and basic properties of \pv V-coproducts}
\label{sec:constr-basic-props-V-coproducts}

The following is the extension to arbitrary profinite semigroups of
the special case of profinite groups considered
in~\cite[Proposition~9.1.2]{Ribes&Zalesskii:2010}.

\begin{Prop}
  \label{p:existence-coproduct}
  Let $(S_i)_{i\in I}$ be a nonempty family of pro-\pv V semigroups. Then the
  \pv V-coproduct $\coprod_{i\in I}^{\pv V}S_i$ exists.
\end{Prop}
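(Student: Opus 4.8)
The plan is to mimic the completion construction used for profinite groups in \cite[Proposition~9.1.2]{Ribes&Zalesskii:2010}, with congruences whose quotients lie in \pv V playing the role of normal subgroups of finite index. First I would form the abstract coproduct $P=\freeprod_{i\in I}S_i$ in the category of all semigroups, with the canonical homomorphisms $\nu_i:S_i\to P$; recall that $P$ is generated by $\bigcup_{i\in I}\nu_i(S_i)$ and has the universal property with respect to arbitrary (not necessarily continuous) semigroup homomorphisms out of the $S_i$. The topologies of the $S_i$, discarded when passing to $P$, are reinstated only through a suitable family of congruences on the combinatorial object $P$.

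Concretely, let $\Theta$ be the set of all congruences $\theta$ on $P$ such that $P/\theta\in\pv V$ and, for every $i\in I$, the composite $q_\theta\circ\nu_i:S_i\to P/\theta$ is continuous, where $q_\theta:P\to P/\theta$ is the quotient map and $P/\theta$ is given the discrete topology. This is a genuine set, being a subset of the power set of $P\times P$, which sidesteps the set-theoretic difficulty of quantifying over all cones into finite semigroups. I would check that $\Theta$ is nonempty (it contains the congruence with one-element quotient, as \pv V contains the trivial semigroup) and closed under finite intersections: given $\theta_1,\theta_2\in\Theta$, the semigroup $P/(\theta_1\cap\theta_2)$ embeds as a subsemigroup of $P/\theta_1\times P/\theta_2$, hence lies in \pv V by closure under finite products and subsemigroups, while each $q_{\theta_1\cap\theta_2}\circ\nu_i$ is continuous because both of its components are. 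Consequently, ordered by reverse inclusion, $\Theta$ is directed, and $(P/\theta)_{\theta\in\Theta}$, with the natural surjections $P/\theta\to P/\theta'$ for $\theta\subseteq\theta'$, is an inverse system of finite semigroups from \pv V; I set $S=\varprojlim_{\theta\in\Theta}P/\theta$, which is pro-\pv V. For each $i$ the compatible family $(q_\theta\circ\nu_i)_{\theta}$ induces a continuous homomorphism $\varphi_i:S_i\to S$, the continuity being exactly what the defining condition of $\Theta$ guarantees.

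It then remains to verify the universal property of Diagram~\ref{eq:definition-of-coproduct}. For existence I would first treat a finite target $T\in\pv V$: given continuous $\psi_i:S_i\to T$, the universal property of $P$ yields a unique homomorphism $\bar\psi:P\to T$ with $\bar\psi\circ\nu_i=\psi_i$, and its kernel congruence $\theta$ satisfies $P/\theta\cong\bar\psi(P)\le T$, so $P/\theta\in\pv V$, while $q_\theta\circ\nu_i$ is continuous; thus $\theta\in\Theta$, and composing the projection $S\to P/\theta$ with the inclusion $\bar\psi(P)\hookrightarrow T$ produces the desired $\psi$. A general pro-\pv V target $T$ is handled by writing $T=\varprojlim_k T_k$ with $T_k\in\pv V$ finite, solving the problem for each $T_k$ to get maps $\psi^{(k)}:S\to T_k$, and passing to the limit, the compatibility of the $\psi^{(k)}$ following from the uniqueness already obtained in the finite case.

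The crux is uniqueness, which rests on a density statement I would establish first: the images $\varphi_i(S_i)$ together generate a dense subsemigroup of $S$. This holds because $P$, and hence every quotient $P/\theta$, is generated by the images of the factors, so the closed subsemigroup of $S$ generated by $\bigcup_{i\in I}\varphi_i(S_i)$ surjects onto each $P/\theta$ and is therefore all of $S$, by the standard compactness argument for closed subsemigroups of inverse limits. Granting this, any two continuous homomorphisms $S\to T$ agreeing on each $\varphi_i(S_i)$ agree on the dense subsemigroup they generate, hence everywhere since $T$ is Hausdorff. The main obstacle I anticipate is not a single hard step but the disciplined bookkeeping of topologies: one must treat $P$ and the $S_i$ as abstract (discrete) semigroups at the combinatorial level while recording their actual topologies solely through the continuity condition defining $\Theta$, and it is precisely that condition whose stability under intersections and under kernel congruences must be confirmed.
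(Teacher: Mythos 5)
Your proposal is correct and follows essentially the same route as the paper: form the abstract free product, pass to the inverse limit over the directed family of congruences with quotient in \pv V and continuous composites, verify the universal property first for finite targets via the kernel congruence, and extend to arbitrary pro-\pv V targets by writing them as inverse limits (the paper isolates this last reduction as a separate proposition, but the content is identical). The density argument you give for uniqueness is also the one used in the paper.
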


In the proof of Proposition~\ref{p:existence-coproduct}
we use the following alternative characterization of
the \pv V-coproduct, consisting in replacing ``pro-\pv V semigroup $T$'' by ``semigroup $T$ from \pv V''
in the above definition of \pv V-coproduct.

\begin{Prop}\label{p:alternative-definition-of-coproduct}
  Consider a nonempty family
  $(S_i)_{i\in I}$ of pro-\pv V semigroups. The \mbox{pro-\pv V}
  semigroup $S$, together with the collection of continuous
  homomorphisms $\varphi_i:S_i\to S$,
  is the \pv V-coproduct of the family $(S_i)_{i\in I}$
  if and only if for every semigroup $T$ from \pv V and every collection of continuous homomorphisms $\psi_i:S_i\to T$, there is a unique continuous homomorphism
$\psi:S\to T$ such that Diagram~\eqref{eq:definition-of-coproduct}
commutes for every $i\in I$.
\end{Prop}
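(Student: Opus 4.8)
The forward implication I would dispose of immediately: a finite semigroup in \pv V is in particular pro-\pv V, so the universal property against all pro-\pv V targets specializes to the one against targets in \pv V. The substance is the converse, and my plan is to use the fact (recorded in Section~\ref{sec:prelims}) that every pro-\pv V semigroup is an inverse limit of finite semigroups from \pv V, then lift the universal property from the finite quotients to the whole target by a gluing argument driven by uniqueness.

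Concretely, assuming the stated property for targets in \pv V, I would take an arbitrary pro-\pv V semigroup $T$ with continuous homomorphisms $\psi_i\colon S_i\to T$ and write $T=\varprojlim_{\lambda\in\Lambda}T_\lambda$ with $T_\lambda\in\pv V$, continuous connecting homomorphisms $\rho_{\lambda,\mu}\colon T_\lambda\to T_\mu$ for $\lambda\ge\mu$, and continuous projections $\pi_\lambda\colon T\to T_\lambda$. For each $\lambda$, the composites $\pi_\lambda\circ\psi_i\colon S_i\to T_\lambda$ land in a member of \pv V, so the hypothesis yields a unique continuous homomorphism $\psi_\lambda\colon S\to T_\lambda$ with $\psi_\lambda\circ\varphi_i=\pi_\lambda\circ\psi_i$ for all $i$.

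The crux is to verify that $(\psi_\lambda)_\lambda$ is compatible with the inverse system. For $\lambda\ge\mu$, both $\rho_{\lambda,\mu}\circ\psi_\lambda$ and $\psi_\mu$ are continuous homomorphisms $S\to T_\mu$ that satisfy the equation defining $\psi_\mu$ (here one uses $\rho_{\lambda,\mu}\circ\pi_\lambda=\pi_\mu$), so the uniqueness clause forces them to coincide. The compatible family then induces, by the universal property of the inverse limit in the category of compact semigroups, a continuous homomorphism $\psi\colon S\to T$ with $\pi_\lambda\circ\psi=\psi_\lambda$; composing $\psi\circ\varphi_i$ with each $\pi_\lambda$ and using that the projections separate the points of $T$ gives $\psi\circ\varphi_i=\psi_i$.

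Uniqueness of $\psi$ I would handle the same way: any competitor $\psi'$ with $\psi'\circ\varphi_i=\psi_i$ satisfies $(\pi_\lambda\circ\psi')\circ\varphi_i=\pi_\lambda\circ\psi_i$, so finite-level uniqueness gives $\pi_\lambda\circ\psi'=\psi_\lambda=\pi_\lambda\circ\psi$ for every $\lambda$, whence $\psi'=\psi$. I expect the only genuinely non-mechanical point to be the compatibility step: it is precisely the uniqueness in \pv V that lets the finite-level solutions be glued coherently, while everything else is the routine bookkeeping of continuous homomorphisms into inverse limits of compact semigroups.
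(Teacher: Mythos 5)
Your proof is correct and follows essentially the same route as the paper's: write the pro-\pv V target as an inverse limit of members of \pv V, solve at each finite level, use the uniqueness clause to check compatibility of the family $(\psi_\lambda)_\lambda$, and glue via the universal property of the inverse limit. You even make explicit the uniqueness of the resulting $\psi$, which the paper leaves implicit.
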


\begin{proof}
  The ``only if'' part of the proposition is trivial.

  Conversely, let us assume that the restricted version of the
  universal property holds for $S$ whenever $T$ belongs to~\pv V.
  Take now an inverse limit
  $T=\varprojlim_{\lambda\in\Lambda}T_\lambda$ of semigroups
  $T_\lambda$ from \pv V. For each $\lambda\in\Lambda$, denote by
  $\pi_\lambda$ the associated projection $T\to T_\lambda$, and for
  $\mu,\lambda\in\Lambda$ such that $\lambda\leq \mu$, let
  $\pi_{\mu,\lambda}$ be the connecting homomorphism $T_\mu\to
  T_\lambda$. Consider a collection of continuous homomorphisms
  $\psi_i:S_i\to T$. By hypothesis, for each $\lambda\in\Lambda$,
  there is a continuous homomorphism $\psi_\lambda:S\to T_\lambda$
  such that Diagram~\ref{eq:alternative-definition-of-coproduct-1}
  commutes for every $i\in I$.
  \begin{equation}
    \label{eq:alternative-definition-of-coproduct-1}
  \begin{split}
    \xymatrix@C=3mm{
  &S_i \ar[ld]_{\varphi_i} \ar[rd]^{\pi_\lambda\circ\psi_i} & \\
  S \ar@{-->}[rr]^{\psi_\lambda}
  && **[r]{T_\lambda .} }
  \end{split}
\end{equation}
Given $\mu,\lambda\in\Lambda$
such that $\lambda\leq\mu$,
we have
\begin{equation*}
  \pi_{\mu,\lambda}\circ\psi_\mu\circ\varphi_i=\pi_{\mu,\lambda}\circ\pi_\mu\circ\psi_i=\pi_\lambda\circ\psi_i
\end{equation*}
for every $i\in I$. Since $\psi_\lambda$ is the unique
continuous homomorphism for which Diagram~\eqref{eq:alternative-definition-of-coproduct-1} commutes for all $i\in I$,
we deduce that $\pi_{\mu,\lambda}\circ\psi_\mu=\psi_\lambda$.
By the definition of inverse limit, we conclude that
there is a continuous homomorphism $\psi:S\to T$ for which
Diagram~\eqref{eq:alternative-definition-of-coproduct-2}
commutes whenever $\mu,\lambda\in\Lambda$ satisfy $\mu\leq\lambda$.
\begin{equation}\label{eq:alternative-definition-of-coproduct-2}
\begin{split}
  \xymatrix@R=3mm{
    &&T_\mu\ar[dd]\ar[dd]^{\pi_{\mu,\lambda}}\\
    S
    \ar@/^5mm/[rru]^{\psi_\mu}
    \ar@{-->}[r]^{\psi}
    \ar@/_5mm/[rrd]_{\psi_\lambda}
    &T\ar[ru]^{\pi_\mu}\ar[rd]_{\pi_\lambda}&\\
    &&T_\lambda
  }
\end{split}  
\end{equation}
Let $i\in I$. Since for every $\lambda\in\Lambda$ we have
$\pi_\lambda\circ(\psi\circ\varphi_i)=\psi_\lambda\circ\varphi_i
=\pi_\lambda\circ\psi_i$, we conclude that $\psi\circ\varphi_i=\psi_i$,
thus establishing the proposition.
\end{proof}

We may now proceed with the proof of
Proposition~\ref{p:existence-coproduct}. It is well known that
coproducts exist in the category of semigroups (see
\cite[Theorem~I.13.5]{Petrich:1984}). We denote the coproduct, also
known as free product, of a nonempty family $(S_i)_{i\in I}$ of
semigroups by $S=\freeprod_{i\in I}S_i$.

\begin{proof}[Proof of Proposition~\ref{p:existence-coproduct}]
  We start by taking the free product $S=\freeprod_{i\in I}S_i$
  in the category of semigroups and the corresponding natural
  homomorphisms $\tilde{\varphi}_i:S_i\to S$. We consider the family
  \Cl C of all congruences $\theta$ on $S$ such that $S/\theta\in\pv
  V$ and, for each $i\in I$, the congruence
  $(\tilde{\varphi}_i\times\tilde{\varphi}_i)^{-1}(\theta)$ is a
  clopen subset of $S_i\times S_i$. The latter condition expresses
  that the composite of $\tilde{\varphi}_i$ followed by the natural
  quotient mapping $q_\theta:S\to S/\theta$ is continuous. Given
  $\theta,\rho\in\Cl C$, note that $\theta\cap\rho\in\Cl C$ since \pv
  V is closed under taking subsemigroups and finite direct products.
  Hence, the quotients $S/\theta$ with $\theta\in\Cl C$ form an
  inverse system, for which we may consider the inverse limit, which
  we denote $S_{\pv V}$. Thus, $S_{\pv V}$ is a pro-\pv V semigroup.
  Let $\iota:S\to S_{\pv V}$ be the natural homomorphism and let
  $\varphi_i=\iota\circ\tilde{\varphi}_i$ ($i\in I$).

  For every $\theta\in\Cl C$,
  the natural projection $\pi_\theta:S_{\pv V}\to S/\theta$
  satisfies
  $\pi_\theta\circ\iota=q_\theta$,
  and so we have the equalities
  \begin{equation*}
    \pi_\theta\circ\varphi_i=\pi_\theta\circ \iota\circ\tilde\varphi_i
    =q_\theta\circ\tilde\varphi_i,
  \end{equation*}
  for which the reader may wish to refer to
  Diagram~\eqref{eq:existence-coproduct-1} below.  
  Since, by the definition of $\Cl C$,
  the mapping $q_\theta\circ\tilde\varphi_i$
  is continuous for every $\theta\in\Cl C$,
 it follows that each mapping $\varphi_i$ is continuous.

 Suppose that $T\in\pv V$ and $(\psi_i)_{i\in I}$ is a family of
 continuous homomorphisms $\psi_i:S_i\to T$. By the universal property
 of the free product $S$, there exists a unique homomorphism
 $\gamma:S\to T$ such that $\gamma\circ\tilde{\varphi}_i=\psi_i$
 ($i\in I$). Let~$\theta$ be the kernel of $\gamma$. Then
 $\theta\in\Cl C$ and there is a unique homomorphism
 $\beta:S/\theta\to T$ such that $\beta\circ q_\theta=\gamma$, and so
 the non-dashed part of Diagram~\eqref{eq:existence-coproduct-1} is
 commutative.
  \begin{equation}\label{eq:existence-coproduct-1}
  \begin{split}  
   \xymatrix@C=5mm{
     &S_i \ar[ld]_{\tilde{\varphi}_i} \ar[rd]^{\psi_i}
    \ar@/_15mm/[ldd]_{\varphi_i} & \\
    S \ar[rr]^\gamma \ar[d]_\iota \ar[rrd]^(.35){q_\theta} && T \\
    S_{\pv V} \ar[rr]^{\pi_\theta} \ar@{-->}[rru]^(.25)\psi
    && S/{\theta}\ar[u]_{\beta}
  } %
 \end{split}
\end{equation}
Therefore, for the continuous homomorphism $\psi=\beta\circ \pi_\theta$,
we have $\psi\circ \varphi_i=\psi_i$, for every $i\in I$.

Since the restriction to $\iota(S)$ of each $\pi_\theta$ is onto, as
so is $q_\theta$, and the preimages of points under the $\pi_\theta$
form a base of the topology of~$S_{\pv V}$, we see that $\iota(S)$ is
dense in $S_{\pv V}$. On the other hand, $S$ is generated by
$\bigcup_{i\in I}\tilde\varphi_i(S_i)$. Thus,
$\psi$ is completely determined by its restriction to the union
$\bigcup_{i\in I}\varphi_i(S_i)$, and so $\psi$ is the unique
continuous homomorphism from $S_{\pv V}$ to $T$ such that, for every
$i\in I$, the following diagram commutes:
\begin{equation*}
    \xymatrix@C=3mm{
  &S_i \ar[ld]_{\varphi_i} \ar[rd]^{\psi_i} & \\
  S_{\pv V} \ar@{-->}[rr]^\psi
  && **[r]{T.} }
\end{equation*}
Since $T$ is an arbitrary semigroup from~\pv V, it follows from
Proposition~\ref{p:alternative-definition-of-coproduct} that the
profinite semigroup $S_{\pv V}$, together with the family of
continuous homomorphisms $(\varphi_i)_{i\in I}$, provides a \pv
V-coproduct of the family $(S_i)_{i\in I}$.
\end{proof}

Let $k\in I$. The continuous semigroup homomorphism $\varphi_k:S_k\to\coprod^\pv V_{i\in I}S_i$
introduced in
the definition of \pv V-coproduct
is said to be the \emph{natural}
mapping from $S_k$ into $\coprod^\pv V_{i\in I}S_i$.

\begin{Prop}\label{p:the-factors-embed-in-the-free-product}
  If $(S_i)_{i\in I}$ is a nonempty family
  of pro-\pv V semigroups then, for every $k\in I$, the natural mapping
  $\varphi_k:S_k\to\coprod^\pv V_{i\in I}S_i$ is injective, whence an
  embedding of topological semigroups.
\end{Prop}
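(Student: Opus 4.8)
The plan is to prove injectivity of $\varphi_k$ by separating points, taking advantage of the reformulation recorded in Proposition~\ref{p:alternative-definition-of-coproduct}, which lets us test the universal property against target semigroups drawn from \pv V itself. Concretely, I would fix two distinct elements $s,t\in S_k$ and aim to produce a continuous homomorphism $\psi\colon\coprod^{\pv V}_{i\in I}S_i\to T$, with $T\in\pv V$, satisfying $\psi\circ\varphi_k=\psi_k$ for some continuous homomorphism $\psi_k\colon S_k\to T$ that already distinguishes $s$ from $t$. Once such a $\psi$ is available, the chain $\psi\bigl(\varphi_k(s)\bigr)=\psi_k(s)\neq\psi_k(t)=\psi\bigl(\varphi_k(t)\bigr)$ forces $\varphi_k(s)\neq\varphi_k(t)$, yielding injectivity.

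To build the required data I would first invoke residual finiteness: since $S_k$ is a pro-\pv V semigroup and $s\neq t$, there exist $T\in\pv V$ and a continuous homomorphism $\psi_k\colon S_k\to T$ with $\psi_k(s)\neq\psi_k(t)$ (so in particular $|T|\ge 2$). The point requiring a small idea is how to supply continuous homomorphisms $\psi_i\colon S_i\to T$ for the remaining indices $i\neq k$, so that the full family $(\psi_i)_{i\in I}$ can be fed into the universal property. Here I would exploit that $T$, being a finite semigroup, contains an idempotent $e$, and take $\psi_i$ to be the constant map with value $e$; this is a continuous homomorphism precisely because $e^2=e$, and crucially it keeps the target inside \pv V. Proposition~\ref{p:alternative-definition-of-coproduct} then delivers the desired continuous homomorphism $\psi$ with $\psi\circ\varphi_i=\psi_i$ for every $i$, and in particular $\psi\circ\varphi_k=\psi_k$.

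Finally, I would upgrade injectivity to a topological embedding by the standard compactness argument: $S_k$ is compact, the coproduct is pro-\pv V and hence compact Hausdorff, and a continuous injection from a compact space into a Hausdorff space is automatically a homeomorphism onto its (closed) image. As $\varphi_k$ is moreover a homomorphism, it is an embedding of topological semigroups. The only genuinely non-formal step is the choice of the auxiliary maps $\psi_i$ on the factors $i\neq k$; the constant-idempotent trick is what allows one to apply the \pv V-valued form of the universal property without enlarging $T$ outside \pv V, and I expect this to be the main (if modest) obstacle.
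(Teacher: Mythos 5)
Your proof is correct. The central device is the same as the paper's --- feed the universal property a family of test maps in which every factor $S_i$ with $i\neq k$ is sent constantly to an idempotent of the target --- but you diverge in the choice of target. The paper simply takes $T=S_k$ itself (legitimate, since $S_k$ is pro-\pv V and the definition of the coproduct quantifies over all pro-\pv V targets), with $\psi_k=\mathrm{Id}_{S_k}$; the resulting $\psi$ satisfies $\psi\circ\varphi_k=\mathrm{Id}_{S_k}$, so $\varphi_k$ is split injective in one line, with no appeal to residual finiteness or to Proposition~\ref{p:alternative-definition-of-coproduct}. Your version instead separates a fixed pair of points through a finite quotient $T\in\pv V$; this costs an extra invocation of residual \pv V-ness but has the mild virtue of only ever using the restricted, \pv V-valued form of the universal property (and of not needing an idempotent in $S_k$ itself, only in the finite $T$ --- though every nonempty compact semigroup has one, so this is not a real saving). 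Both arguments are sound; the paper's retraction argument is shorter and gives slightly more, namely that $\varphi_k$ admits a continuous left inverse. Your concluding compactness argument for upgrading injectivity to a topological embedding is standard and correct.
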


\begin{proof}
  Let $k\in I$. Since $S_k$ is pro-$\pv V$, in
  Diagram~\ref{eq:definition-of-coproduct} we may take $T=S_k$, choose
  $\psi_k$ as the identity $\mathrm{Id}_{S_k}$ on $S_k$, and if $i\in
  I\setminus\{k\}$, we choose $\psi_i$ as any constant mapping from
  $S_i$ onto an idempotent of $S_k$. We may then consider the
  continuous homomorphism $\psi$ as in
  Diagram~\ref{eq:definition-of-coproduct}, for which we have in
  particular $\psi\circ\varphi_k=\mathrm{Id}_{S_k}$, and so the
  proposition holds.
\end{proof}

In view of Proposition~\ref{p:the-factors-embed-in-the-free-product},
we may henceforth see each $S_i$ as a closed subsemigroup of
$\coprod^\pv V_{i\in I}S_i$, with $\varphi_i$ being the inclusion.

Let \pv{Sl} denote the pseudovariety of all finite semilattices.
The following technical observation will be convenient later on.

 \begin{Lemma}
  \label{l:traces-of-generating-sets}
  Suppose that \pv V contains \pv{Sl}. Let $(S_i)_{i\in I}$ be a
  nonempty family of nontrivial pro-$\pv V$ semigroups. Let $A$ be a
  generating subset of the \pv V-coproduct $\coprod^\pv V_{i\in I}S_i$
  as a topological semigroup. Then $A\cap S_i$ generates $S_i$ as a
  topological semigroup for every $i\in I$.
\end{Lemma}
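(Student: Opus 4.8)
The plan is to fix an index $k\in I$ and prove the equivalent statement that the closed subsemigroup $\overline{\langle A\cap S_k\rangle}$ generated by $A\cap S_k$ equals $S_k$; since $A\cap S_k\subseteq S_k$ and $S_k$ is closed in $\coprod^\pv V_{i\in I}S_i$, only the inclusion $S_k\subseteq\overline{\langle A\cap S_k\rangle}$ needs an argument. The main device will be an ``indicator homomorphism'' that detects membership in the factor $S_k$, and it is precisely here that the hypothesis $\pv{Sl}\subseteq\pv V$ enters.

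First I would introduce the two-element semilattice $U=\{1,0\}$, in which $1$ is the identity and $0$ is the zero, so that $U\in\pv{Sl}\subseteq\pv V$ and $U$ is pro-$\pv V$. Applying the universal property defining the \pv V-coproduct (Diagram~\eqref{eq:definition-of-coproduct}) to the continuous homomorphisms $\psi_k\colon S_k\to U$ constant equal to $1$ and $\psi_j\colon S_j\to U$ constant equal to $0$ for $j\neq k$ (these are homomorphisms since $1$ and $0$ are idempotents), I obtain a continuous homomorphism $\chi_k\colon\coprod^\pv V_{i\in I}S_i\to U$ with $\chi_k\circ\varphi_i=\psi_i$ for every $i\in I$.

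The key structural step is to establish that $\chi_k^{-1}(1)=S_k$. The inclusion $S_k\subseteq\chi_k^{-1}(1)$ is immediate. For the converse I would use that the subsemigroup generated by $\bigcup_{i\in I}\varphi_i(S_i)$ is dense in the coproduct, as shown in the proof of Proposition~\ref{p:existence-coproduct}, together with the fact that $0$ is absorbing in $U$: a product of elements drawn from the factors is sent by $\chi_k$ to $1$ exactly when each of its letters lies in $S_k$, in which case the product itself lies in $S_k$. Since $\chi_k^{-1}(1)$ is clopen ($U$ being finite discrete) and $S_k$ is closed, passing to limits yields $\chi_k^{-1}(1)\subseteq S_k$. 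I expect this to be the main obstacle, since it is where $\pv{Sl}\subseteq\pv V$ is genuinely used and where one must be careful that, in the free product underlying the construction, each factor letter has an unambiguous home factor so that $\chi_k$ is correctly computed on the dense subsemigroup.

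With the identity $\chi_k^{-1}(1)=S_k$ in hand, the conclusion follows quickly. Given $s\in S_k$, density of $\langle A\rangle$ lets me write $s=\lim_n W_n$ with each $W_n$ a finite product of elements of $A$. Applying $\chi_k$ and using continuity gives $\chi_k(W_n)\to\chi_k(s)=1$, so $\chi_k(W_n)=1$ for all large $n$; as $0$ is absorbing, this forces every letter of such a $W_n$ to map to $1$, that is, to lie in $\chi_k^{-1}(1)\cap A=S_k\cap A$. Hence $W_n\in\langle A\cap S_k\rangle$ for all large $n$, whence $s\in\overline{\langle A\cap S_k\rangle}$. As $s\in S_k$ was arbitrary and $k$ was arbitrary, this proves the lemma.
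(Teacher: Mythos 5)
Your proposal is correct and follows essentially the same route as the paper: both construct the indicator homomorphism onto the two-element semilattice via the universal property, show its fiber over $1$ is exactly $S_k$ by a density argument, and then use absorbency of $0$ to force all factors of an approximating product to lie in $A\cap S_k$. The only cosmetic difference is that the paper phrases the last step in terms of the complement being an ideal and uses nets rather than sequences (which you should too, since the coproduct need not be metrizable for an arbitrary family).
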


\begin{proof}
  Let $S=\freeprod_{i\in I}S_i$ and let $S_{\pv V}=\coprod^\pv
  V_{i\in I}S_i$.
  Note that $\bigcup_{i\in I}S_i$ generates $S_{\pv V}$. We should
  show that every element of $S_i$ is the limit of a net of products
  of elements of $A\cap S_i$. With this aim, we first claim that there
  is a continuous homomorphism $\psi_{\pv V}$ from $S_{\pv V}$ to the
  two-element semilattice $\{0,1\}$ such that
  $\psi_{\pv V}^{-1}(1)=S_i$.
  It follows that $S_{\pv V}\setminus S_i=\psi_{\pv V}^{-1}(0)$ is an
  ideal of~$S_{\pv V}$ containing $A\setminus S_i$. Hence, in a net of
  products of elements of~$A$ converging to an element of~$S_i$ all
  products from some point on can only involve factors from $A\cap S_i$.
  
  To establish the claim, we use the universal properties of~$S$
  and~$S_{\pv V}$ to define a homomorphism $\psi$ and a continuous
  homomorphism $\psi_{\pv V}$ by considering the constant
  homomorphisms $\varphi_k$ on the $S_k$, with value 1 for $k=i$ and
  value 0 for $k\in I\setminus\{i\}$. More precisely, we have the
  following commutative diagram for each $k\in I$:
  \begin{displaymath}
    \xymatrix@C=13mm{
      S
      \ar[rd]^\psi
      \ar[d]_\iota
      &
      S_k
      \ar@{_(->}[l]
      \ar[d]^{\varphi_k}
      \\
      S_{\pv V}
      \ar[r]^{\psi_{\pv V}}
      &
      \{0,1\}.
    }
  \end{displaymath}
  Since $\psi_{\pv V}$ is continuous and $\iota(S)$ is dense
  in~$S_{\pv V}$, we obtain the following formula:
  \begin{align*}
    \psi_{\pv V}^{-1}(1)
    &=\overline{\iota(\psi^{-1}(1))}=\iota(S_i)=S_i
  \end{align*}
  which establishes the claim and completes the proof of the lemma.
\end{proof}

Note that the hypothesis that \pv V contains \pv{Sl} may not be
omitted in Lemma \ref{l:traces-of-generating-sets}. For instance, if
$G$ is a cyclic group of order $2$ with generator $a$ and $H$ is a
cyclic group of order $3$ with generator $b$, then $ab^2$ is a
generator of $G\amalg^{\pv{Ab}}H$ (since $(ab^2)^2=b$ and
$(ab^2)^3=a$) but $ab^2\notin G\cup H$.

The next couple of facts (that one should bear in mind, albeit not
needed for the sequel) may also be easily proved with the universal
property of the \pv V-coproduct:
 
\begin{enumerate}[1.]
\item\label{item:coproduct-1} The \pv V-coproduct is associative:
  if the nonempty set $I$ has a partition $I=\biguplus_{j\in J} I_j$,
  and $(S_i)_{i\in I}$ is a family of pro-\pv V semigroups,
  then we have an isomorphism
  \begin{equation*}
    \textstyle\coprod_{i\in I}^{\pv V}S_i\cong\coprod_{j\in J}^{\pv V}(\coprod_{i\in I_j}^{\pv V}S_i)
  \end{equation*}
  of profinite semigroups.
  
\item\label{item:coproduct-2} For a subpseudovariety \pv V,
  the \pv V-coproduct of free pro-\pv V semigroups is also free
  pro-\pv V.
\end{enumerate}

\subsection{On the injectivity of the mapping $\iota$}
\label{sec:on-inject-iota}

It is natural to ask under what conditions the natural mapping
\begin{equation*}
  \iota:\freeprod_{i\in I}S_i\to\coprod^\pv V_{i\in I}S_i
\end{equation*}
is injective for pro-\pv V semigroups $S_i$ ($i\in I$). A partial
solution to the analogous question for pseudovarieties of groups in
the category of groups can be found in
\cite[Proposition~9.1.8]{Ribes&Zalesskii:2010}. Note, that in, that
category, the free product of trivial groups is trivial, whereas the
free product of more than one trivial semigroup (in every category of
semigroups containing semigroups with more than one idempotent) is an
infinite idempotent-generated semigroup. Our main result of this
section gives a sufficient condition for injectivity of the
function~$\iota$. Our sufficient
condition holds for a large class of pseudovarieties considered in the
remainder the paper, namely equidivisible pseudovarieties containing
\pv{Sl}. We leave as an open problem the complete characterization of
the pseudovarieties for which the function $\iota$ is injective.

We start by some simple observations.

\begin{Remark}
  \label{r:mapping-factors}
  Let $\varphi_i:S_i\to T_i$ ($i\in I$) be continuous homomorphisms
  between pro-\pv V semigroups. By the universal property of \pv
  V-coproducts, there is a unique continuous homomorphism
  $\varphi_{\pv V}$ such that the following diagram commutes:
  \begin{displaymath}
    \xymatrix{
      S_i
      \ar[r]^{\varphi_i}
      \ar@{^{((}->}[d]
      &
      T_i
      \ar@{^{((}->}[d]
      \\
      \coprod^\pv V_{i\in I}S_i
      \ar[r]^{\varphi_{\pv V}}
      &
      \coprod^\pv V_{i\in I}T_i.
    }
  \end{displaymath}
\end{Remark}

\begin{Remark}
  \label{r:LG}
  Let $X$ be a set and suppose that $x$ and $y$ are distinct elements
  of~$X$ such that $x^\omega$ is a factor $y^\omega$ in the semigroup
  \Om XV. Then, \pv V is contained in~\pv{LG}. Indeed, it follows
  that, in every semigroup from \pv V, all idempotents are factors of
  each other, a property that characterizes membership in~\pv{LG}.
\end{Remark}

\begin{Lemma}
  \label{l:injectivity-for-equidivisible-on-trivial}
  Let \pv V be a pseudovariety of semigroups containing \pv J. Then
  the natural mapping $\iota:\freeprod_{i\in
    I}S_i\to\coprod^\pv V_{i\in I}S_i$ is injective whenever the $S_i$
  are trivial semigroups.
\end{Lemma}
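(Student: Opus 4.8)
The plan is to identify the coproduct with a completion of the free product and reduce the statement to a residual finiteness property. Since each $S_i=\{e_i\}$ is a trivial, hence finite and discrete, semigroup, the free product $S=\freeprod_{i\in I}S_i$ is the semigroup of \emph{reduced words} $e_{i_1}\cdots e_{i_k}$ with $i_j\neq i_{j+1}$, the product being concatenation followed by collapsing equal adjacent letters. By the construction in the proof of Proposition~\ref{p:existence-coproduct}, the coproduct is $\varprojlim_{\theta\in\Cl C}S/\theta$, where $\Cl C$ is the set of congruences $\theta$ with $S/\theta\in\pv V$ (the continuity condition being vacuous because the $S_i$ are discrete), and $\iota$ is the associated map $s\mapsto(s\theta)_\theta$. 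Hence $\iota$ is injective precisely when $S$ is residually $\pv V$, and since $\pv J\subseteq\pv V$ it suffices to separate any two distinct reduced words by a homomorphism onto a member of $\pv J$.

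Next I would produce the separating morphisms explicitly. Fix distinct reduced words $u\neq v$, let $J\subseteq I$ be the finite set of indices of the letters occurring in $u$ or $v$, and let $n\geq\max\{|u|,|v|\}$. Over the finite alphabet $\{e_i:i\in J\}$ the reduced words of length exceeding $n$ form an ideal in the semigroup of all reduced words over that alphabet; let $T_n$ be the corresponding Rees quotient, a finite semigroup whose elements are the reduced words of length at most $n$ together with a zero, the product of two words being their reduced concatenation if it has length at most $n$ and $0$ otherwise. Define $h\colon S\to T_n$ on generators by $h(e_i)=e_i$ for $i\in J$ and $h(e_i)=0$ otherwise; since every value is idempotent, the universal property of the free product yields a homomorphism. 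As $u$ and $v$ involve only letters indexed by $J$ and have length at most $n$, we get $h(u)=u\neq v=h(v)$ in $T_n$.

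It remains, and this is the crux, to check that $T_n\in\pv J$. The key structural observation is that in $T_n$ every element is either the zero, one of the idempotent generators $e_i$, or a reduced word $w$ of length at least $2$; and such a $w$ satisfies $|w^k|\to\infty$ under reduced concatenation, so $w^k=0$ for large $k$, i.e.\ $w$ is nilpotent. Thus the idempotents of $T_n$ are exactly $0$ and the $e_i$, and a product $xy$ equals a single generator only when $x=y=e_i$. I would then verify the defining pseudoidentities $(xy)^\omega x=(xy)^\omega=y(xy)^\omega$ of $\pv J$: if $x=y=e_i$ both sides equal $e_i$, while in every other case $xy$ is $0$ or nilpotent, so $(xy)^\omega=0$ and the identities hold because $0$ is the zero of $T_n$. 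This places $T_n$ in $\pv J\subseteq\pv V$ and completes the separation. The main obstacle is precisely this last verification: one must be sure that the truncated reduced-word semigroups are $\Cl J$-trivial, which hinges on the fact that the only non-nilpotent elements are the idempotent generators themselves.
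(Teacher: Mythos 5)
Your proof is correct, but it takes a genuinely different route from the paper's. The paper composes $\iota$ with the continuous homomorphism $\coprod^{\pv V}_{i\in I}\{e_i\}\to\Om XJ$ sending $e_i\mapsto x_i^\omega$ and then invokes a known structural result on $\Om XJ$ (Theorem~8.2.8 of \cite{Almeida:1994a}) asserting that a product $x_{i_1}^\omega\cdots x_{i_m}^\omega$ with adjacent indices distinct determines the sequence $(i_1,\ldots,i_m)$; the whole argument is three lines modulo that citation. You instead prove directly that the free product of trivial semigroups is residually \pv J, by exhibiting explicit finite separating quotients: the length-truncated reduced-word semigroups $T_n$, whose membership in \pv J you verify via the pseudoidentities $\op (xy)^\omega x=(xy)^\omega=y(xy)^\omega\cl$. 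Your reduction of injectivity of $\iota$ to residual \pv V-ness is sound (the continuity constraints in the family $\Cl C$ from the proof of Proposition~\ref{p:existence-coproduct} are indeed vacuous for discrete factors), the ideal property of long reduced words holds because $|uv|\in\{|u|+|v|-1,\,|u|+|v|\}$, and your case analysis for the pseudoidentities is complete (the only way $xy$ can be a generator is $x=y=e_i$; otherwise $xy$ is zero or nilpotent). What each approach buys: the paper's is shorter and leverages existing structure theory of free pro-\pv J semigroups, while yours is self-contained and elementary, makes the role of the hypothesis $\pv J\subseteq\pv V$ completely transparent by producing concrete finite $\Cl J$-trivial witnesses, and handles an arbitrary index set $I$ without detour through an infinitely generated $\Om XJ$.
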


\begin{proof}
  Let $S_i=\{e_i\}$ ($i\in I$) and let $I\to X$ be a bijection given by
  $i\mapsto x_i$. By the universal property of \pv V-coproducts, there is a
  unique continuous homomorphism $\varphi:\coprod_{i\in I}^{\pv
    V}\{e_i\}\to\Om XJ$ such that $\varphi(e_i)=x_i^\omega$.

  Given elements $e_{i_1}e_{i_2}\cdots e_{i_m}$ and
  $e_{j_1}e_{j_2}\cdots e_{j_n}$ in $\freeprod_{i\in I}\{e_i\}$
  (with adjacent indices distinct), suppose that their images under
  $\iota$ coincide. Then, their images under
  $\varphi\circ\iota$ also coincide, giving the equality
  \begin{equation}\label{eq:identity-between-idempotents}
    x_{i_1}^\omega x_{i_2}^\omega\cdots x_{i_m}^\omega
    =
    x_{j_1}^\omega x_{j_2}^\omega\cdots x_{j_n}^\omega.
  \end{equation}
  Hence, it suffices to show that, for every equality in $\Om XJ$ of
  the form~\eqref{eq:identity-between-idempotents}, with $x_k\in X$
  for every index $k$, if all adjacent indices are distinct, then we
  must have $n=m$ and $i_k=j_k$ for every $k\in\{1,\ldots,n\}$. This
  is a special case of \cite[Theorem~8.2.8]{Almeida:1994a}.
\end{proof}

\begin{Lemma}
  \label{l:S*e}
  Let \pv V be a pseudovariety of semigroups containing \pv{Sl} such
  that $\pv N\malcev\pv V=\pv V$ and let $S$ be a member of~\pv V.
  Then, the natural mapping %
  $\iota:S\freeprod\{e\}\to S\amalg^{\pv V}\{e\}$
  is injective.
\end{Lemma}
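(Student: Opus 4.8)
My plan is to establish injectivity directly, by separating points. Recall from the inverse-limit construction in the proof of Proposition~\ref{p:existence-coproduct}, where the \pv V-coproduct appears as $\varprojlim S/\theta$ over the family $\Cl C$ of congruences, that $\iota$ is injective if and only if for every pair of distinct elements $u\ne v$ of $S\freeprod\{e\}$ there exist a finite semigroup $T\in\pv V$ and a homomorphism $\phi\colon S\freeprod\{e\}\to T$, induced through the universal property of the free product by a homomorphism $S\to T$ and by an idempotent $\phi(e)\in T$, with $\phi(u)\ne\phi(v)$. Every element of $S\freeprod\{e\}$ has a unique reduced form $u_0\,e\,u_1\,e\cdots e\,u_p$, where $p\ge 0$ is the number of occurrences of $e$, the blocks $u_1,\dots,u_{p-1}$ lie in $S$, and $u_0,u_p$ lie in $S$ or are empty (for $p=0$ the word is a single element of $S$). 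Thus two distinct reduced words differ either in their \emph{shape}---the alternating pattern of $S$-blocks and letters $e$---or share the same shape but differ in the value in $S$ of some block. I first record two consequences of the hypotheses: since $\pv N\malcev\pv V=\pv V$ one has $\pv N\subseteq\pv V$, and since $\pv J=\pv N\malcev\pv{Sl}$ and $\pv{Sl}\subseteq\pv V$, monotonicity of the Mal'cev product gives $\pv J=\pv N\malcev\pv{Sl}\subseteq\pv N\malcev\pv V=\pv V$.

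To separate words of different shape, the plan is to collapse all of $S$ to a single idempotent. Fixing idempotents $a$ and $b$ in the target and sending every element of $S$ to $a$ and $e$ to $b$ defines a homomorphism (a constant map onto an idempotent is homomorphic, and $e$ is idempotent), under which a reduced word is sent to the strictly alternating word in $\{a,b\}$ that encodes its shape. Distinct shapes yield distinct alternating words; since distinct finite words differ in some scattered subword and membership of a fixed scattered subword is recognized by a finite \pv J-trivial semigroup, $u$ and $v$ are separated inside $\Om{\{a,b\}}{J}$, hence by a finite \pv J-trivial quotient, which lies in $\pv V$ by the inclusion $\pv J\subseteq\pv V$. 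This settles every pair of words of different shape, in particular every pair with different numbers of occurrences of $e$.

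It remains to separate two reduced words of the same shape whose $i$-th $S$-blocks have different values. For this I would seek a homomorphism $\phi_i\colon S\freeprod\{e\}\to V_i$ into a finite member $V_i$ of $\pv V$ whose value on a reduced word records, together with the shape up to the $i$-th block, the value in $S$ of that block; such a $\phi_i$ then separates $u$ and $v$. The natural candidate for $V_i$ is a finite semigroup of positional character: it locates the $i$-th $S$-block by means of the alternation between $S$-blocks and the idempotent image of $e$ (so that both $\phi_i(e)^2=\phi_i(e)$ and compatibility with the multiplication of $S$ are respected), it accumulates in $S$ the product of the letters of that block, and it sends to a zero all data outside the relevant window. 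Associativity and well-definedness of this truncated construction are routine, along the lines of the Rees quotients of $S\freeprod\{e\}$ by the ideal of overlong words.

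The crux, and the only place where the full strength of the hypothesis intervenes, is the membership $V_i\in\pv V$. The counting performed by $V_i$ is of a prefix-reading type that is not in general available in an arbitrary $\pv V$; what rescues the construction is that this counting is bounded and strictly progressing, hence nilpotent in nature. Accordingly, the plan is to present $V_i$ by means of an \pv N-morphism onto a semigroup assembled solely from $S$ and from semilattices---a member of $\pv V$, since $\pv{Sl}\subseteq\pv V$ and $S\in\pv V$---namely the morphism that discards the positional bookkeeping and retains only the coarse pair consisting of the shape class and the value of the $i$-th block, and then to verify that each fibre over an idempotent is nilpotent, the point being that inside such a fibre the positional progress is strictly monotone and therefore collapses to a zero after boundedly many products. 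Membership $V_i\in\pv N\malcev\pv V=\pv V$ follows, and with both gadgets at hand every pair of distinct elements of $S\freeprod\{e\}$ is separated by a homomorphism into $\pv V$, whence $\iota$ is injective. I expect the genuine difficulty to be exactly this last step: arranging the positional component to be simultaneously idempotent on $e$ and fine enough to read off the $i$-th block, and establishing the nilpotency of the idempotent fibres.
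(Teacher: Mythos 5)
Your overall strategy---separating distinct elements of $S\freeprod\{e\}$ by homomorphisms into finite members of~$\pv V$, and deducing membership in~$\pv V$ from $\pv N\malcev\pv V=\pv V$ via nilpotent fibres---is in the right spirit, but the proof is incomplete exactly where you yourself locate the difficulty, and the completion you sketch does not work as stated. A smaller issue first: in the shape-separating step the images of $S$ and of $e$ are forced to be \emph{idempotents} of the target, so a reduced word is sent to an alternating \emph{product of idempotents}, not to a free word over $\{a,b\}$; the scattered-subword argument for distinct finite words therefore does not apply directly. What is actually needed is that distinct alternating products $x_{i_1}^\omega\cdots x_{i_m}^\omega$ with distinct adjacent indices are distinct in $\Om{\{a,b\}}{J}$, which is the content of Lemma~\ref{l:injectivity-for-equidivisible-on-trivial} and rests on a nontrivial normal-form theorem. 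The serious problem is the third step: the gadget $V_i$ is never constructed, and the $\pv N$-morphism you propose---onto the semigroup recording only the shape class together with the value of the $i$-th block---is not induced by a congruence. In a product $uv$ the $i$-th block may straddle the factor boundary, in which case its value is the product of the \emph{last} block of $u$ with the \emph{first} block of $v$, and this is not determined by the shapes and $i$-th blocks of the factors (take $v=s_3es_4$ and $v'=s_3's_{}es_4$ with $s_3\ne s_3'$ and $i=2$). So the idempotent-fibre analysis has no well-defined target.

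The paper's proof shows that neither the case split nor any positional bookkeeping is needed. Given distinct $u=s_1es_2e\cdots es_m$ and $v=t_1et_2e\cdots et_n$, it takes the Rees quotient $T=(S\freeprod\{e\})/J$ by the ideal $J=\bigcup_{k>\max\{m,n\}}S^Ie(Se)^kS^I$ of overlong words---the very construction you invoke only as a model for well-definedness. This single finite quotient separates $u$ from $v$ outright, whatever the reason they differ. Membership of $T$ in~$\pv V$ is then obtained by letting $K$ be the complement of $S\cup\{e\}$ in~$T$: it is a nilpotent ideal (each mixed word has at least two syllables and concatenation loses at most one syllable per boundary, so long products of elements of $K$ fall into~$J$), and $T/K$ is the zero sum of $S$ and $\{e\}$, a homomorphic image of $S\times U$ with $U$ a three-element semilattice, hence in~$\pv V$ because $\pv{Sl}\subseteq\pv V$ and $S\in\pv V$; thus $T\in\pv N\malcev\pv V=\pv V$. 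If you wish to salvage your plan, replace the target of your $\pv N$-morphism by this $T/K$---but at that point you have reproduced the paper's argument and the shape-separation step becomes redundant.
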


\begin{proof}
  Let $u=s_1es_2e\cdots es_m$ and $v=t_1et_2e\cdots et_n$ be distinct
  elements of~$S\freeprod\{e\}$, where
  $s_2,\ldots,s_{m-1},t_2,\ldots,t_{n-1}\in S$ and $s_1,s_m,t_1,t_n\in
  S^I$. Consider the ideal %
  $J=\bigcup_{k>\max\{m,n\}}S^Ie(Se)^kS^I$ of $S\freeprod\{e\}$. Since
  the elements $u$ and $v$ are not in~$J$, they are distinguished in
  the Rees quotient $T=S\freeprod\{e\}/J$. Note that, since $S$ is
  finite, so is $T$. To complete the proof, it suffices to establish
  that $T$ belongs to~\pv V, since then the natural homomorphism
  $S\freeprod\{e\}\to T$ factors through~$\iota$.

  Let $K$ be the complement of $S\cup\{e\}$ in $T$. Then $K$ is an
  ideal and a nilpotent subsemigroup of~$T$. Thus, if we show that
  $T/K\in\pv V$, then it follows that $T\in\pv N\malcev\pv V=\pv V$.
  But, $T/K$ is the zero sum of the semigroups $S$ and $\{e\}$, which
  is a quotient of the direct product $S\times U$ where $U$ is the
  three-element semilattice which is the zero sum of two trivial
  semigroups. This shows that indeed $T\in\pv V$.
\end{proof}

\begin{Thm}
  \label{t:injectivity-for-equidivisible}
  Let \pv V be a pseudovariety of semigroups containing \pv{Sl} such
  that $\pv N\malcev\pv V=\pv V$. Then the natural mapping
  $\iota:\freeprod_{i\in I}S_i\to\coprod^\pv V_{i\in I}S_i$ is
  injective whenever the $S_i$ are pro-\pv V semigroups.
\end{Thm}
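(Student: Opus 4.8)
The plan is to reduce the general injectivity statement to the two special cases already established in Lemmas~\ref{l:injectivity-for-equidivisible-on-trivial} and~\ref{l:S*e}, by approximating each pro-$\pv V$ semigroup $S_i$ from inside and by finite quotients, and then transporting injectivity along the functoriality of the coproduct recorded in Remark~\ref{r:mapping-factors}.

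First I would fix two distinct elements $u, v$ of the free product $\freeprod_{i\in I}S_i$ and aim to separate their $\iota$-images in $\coprod^{\pv V}_{i\in I}S_i$. Each of $u, v$ involves only finitely many factors $S_i$ and, within each such factor, only finitely many elements of $S_i$; so there is a finite subset $F \subseteq I$ such that $u, v$ already lie in $\freeprod_{i\in F}S_i$. Since the coproduct is associative (item~\ref{item:coproduct-1}) and a free product of a subfamily embeds naturally in the free product of the whole family, it suffices to treat the case where $I = F$ is finite.

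Next, the key reduction: I would choose, for each $i \in F$, a continuous homomorphism $\pi_i : S_i \to T_i$ onto a finite semigroup $T_i \in \pv V$ that is fine enough to keep the (finitely many) letters appearing in $u$ and $v$ distinct and to respect the block structure witnessing $u \neq v$. Applying the functor of Remark~\ref{r:mapping-factors} to the family $(\pi_i)_{i \in F}$ gives a commutative square relating $\coprod^{\pv V}_{i \in F}S_i$ to $\coprod^{\pv V}_{i \in F}T_i$, so it is enough to separate the images of $u, v$ in the latter. This replaces the arbitrary pro-$\pv V$ factors by \emph{finite} factors from~$\pv V$. For finite factors I expect to proceed by induction on $|F|$ and on the sizes of the $T_i$, peeling off one generator at a time: writing each finite $T_i \in \pv V$ as generated by finitely many elements, one reduces a general factor to a product of copies of the trivial semigroup (handled by Lemma~\ref{l:injectivity-for-equidivisible-on-trivial}, which applies because $\pv V \supseteq \pv J$ by the hypotheses $\pv{Sl} \subseteq \pv V$ and $\pv N \malcev \pv V = \pv V$, whence $\pv J = \pv N \malcev \pv{Sl} \subseteq \pv V$) and the one-generator-adjunction step $S \amalg^{\pv V} \{e\}$ (handled by Lemma~\ref{l:S*e}, whose hypotheses match those of the theorem).

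The main obstacle will be organizing the inductive step so that the adjunction of a single idempotent generator via Lemma~\ref{l:S*e} composes correctly with an inductive separation inside the remaining factors, keeping track of the normal-form (alternating-block) structure of elements in the free product across the functorial squares. The delicate point is that Lemma~\ref{l:S*e} only handles adjoining a \emph{trivial} semigroup $\{e\}$ to a single $S \in \pv V$, so I must arrange the letters of a finite generating set as successive idempotent adjunctions while ensuring that the hypotheses $\pv{Sl} \subseteq \pv V$ and $\pv N \malcev \pv V = \pv V$ are preserved and invoked at each stage; verifying that the separating homomorphisms assemble consistently along the inverse limits defining the coproducts is where the care is needed.
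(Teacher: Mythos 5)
Your reductions to a finite index set and to finite factors $T_i\in\pv V$ (via Remark~\ref{r:mapping-factors} and quotients separating the finitely many letters occurring in $u$ and $v$) are sound, and your derivation of $\pv J\subseteq\pv V$ is exactly the paper's. The argument breaks down at the final step. You propose to handle $\coprod^{\pv V}_{i\in F}T_i$ with finite factors by ``peeling off one generator at a time,'' reducing to Lemma~\ref{l:injectivity-for-equidivisible-on-trivial} and Lemma~\ref{l:S*e}. But Lemma~\ref{l:S*e} adjoins a trivial semigroup as a \emph{coproduct factor}, not as a generator: a finite semigroup on $k$ generators is in no sense an iterated coproduct of $k$ trivial semigroups (the latter is an infinite idempotent-generated profinite semigroup), and $T_1\amalg^{\pv V}T_2$ with both factors finite and nontrivial admits no decomposition into successive adjunctions of single idempotents. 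So the induction you sketch has no valid inductive step; you flag this yourself as ``the delicate point'' but do not resolve it, and it cannot be resolved in the form stated.

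The paper's proof sidesteps proving injectivity into coproducts of finite factors altogether. Given $u=s_{i_1}\cdots s_{i_m}$ and $v=t_{j_1}\cdots t_{j_n}$ in normal form with $\iota(u)=\iota(v)$, it first collapses every $S_i$ to a point; Lemma~\ref{l:injectivity-for-equidivisible-on-trivial} then forces $m=n$ and $i_k=j_k$ for all $k$. Next, assuming $s_j\ne t_j$ at some position with $j=i_k$, it picks a finite quotient $\psi:S_j\to F$ with $F\in\pv V$ separating $s_j$ from $t_j$ and collapses \emph{all the remaining factors to one and the same trivial semigroup} $\{e\}$, landing in $F\amalg^{\pv V}\{e\}$ --- precisely the two-factor situation of Lemma~\ref{l:S*e}. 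Injectivity of $F\freeprod\{e\}\to F\amalg^{\pv V}\{e\}$ then forces $\psi(s_j)=\psi(t_j)$, a contradiction. Thus one never needs a single homomorphism that is injective on a finite coproduct; one only needs, for each pair $u\ne v$, some collapsing map that separates them. If you replace your generator-peeling induction with this collapsing argument, the rest of your outline goes through (and the preliminary reduction to a finite index set becomes unnecessary).
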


\begin{proof}
  Consider constant homomorphisms $\varphi_i:S_i\to\{e_i\}$, the
  induced continuous homomorphism %
  $\varphi_{\pv V}:\coprod^{\pv V}_{i\in I}S_i %
  \to\coprod^{\pv V}_{i\in I}\{e_i\}$ given by
  Remark~\ref{r:mapping-factors}, and an analogous abstract
  homomorphism $\varphi:\freeprod_{i\in
    I}S_i\to\freeprod_{i\in I}\{e_i\}$. We get the following
  commutative diagram, where $\iota'$ is the natural mapping:
  \begin{displaymath}
    \xymatrix@C=14mm{
      \freeprod_{i\in I}S_i
      \ar[r]^\varphi
      \ar[d]^\iota
      &
      \freeprod_{i\in I}\{e_i\}
      \ar[d]^{\iota'}
      \\
      \coprod^{\pv V}_{i\in I}S_i
      \ar[r]^{\varphi_{\pv V}}
      &
      \coprod^{\pv V}_{i\in I}\{e_i\}.
    }
  \end{displaymath}
  Given elements $u=s_{i_1}s_{i_2}\cdots s_{i_m}$ and
  $v=t_{j_1}t_{j_2}\cdots t_{j_n}$ of $\freeprod_{i\in I}S_i$,
  with $s_i,t_i\in S_i$ and no two adjacent indices equal, suppose
  that their images under $\iota$ coincide. As $\pv J=\pv
  N\malcev\pv{Sl}\subseteq \pv N\malcev\pv V=\pv V$, we see that
  $\iota'$ is injective by
  Lemma~\ref{l:injectivity-for-equidivisible-on-trivial}. Hence, the
  images of $u$ and $v$ under $\varphi$ must be equal, too. We conclude
  that $m=n$ and $i_1=j_1,\ldots, i_m=j_m$.

  It remains to show that $s_{i_k}=t_{i_k}$ for $k=1,\ldots,m$.
  Suppose, on the contrary, that $s_j\ne t_j$ for some $j=i_k$ with
  $k\in\{1,\ldots,m\}$. Since $S_j$ is a pro-\pv V semigroup, there is
  a continuous homomorphism $\psi:S_j\to F$ into some $F\in\pv V$ such
  that $\psi(s_j)\ne\psi(t_j)$. We also consider the unique
  homomorphisms $S_i\to\{e\}$ for each $i\in I\setminus\{j\}$.
  Together, these mappings induce the horizontal mappings in the
  following commutative diagram:
  \begin{displaymath}
    \xymatrix@C=14mm{
      \freeprod_{i\in I}S_i
      \ar[r]^{\bar\psi}
      \ar[d]^\iota
      &
      F\freeprod\{e\}
      \ar[d]^{\iota'}
      \\
      \coprod^{\pv V}_{i\in I}S_i
      \ar[r]^{\bar\psi_{\pv V}}
      &
      F\amalg^{\pv V}\{e\}
    }
  \end{displaymath}
  where $\iota'$ is the natural mapping. As $\iota(u)=\iota(v)$, we
  get $\iota'(\bar\psi(u)) =\iota'(\bar\psi(v))$.
  Since $\iota'$ is injective by Lemma~\ref{l:S*e}, we conclude
  that $\bar\psi(u)=\bar\psi(v)$, which implies that
  $\psi(s_j)=\psi(t_j)$, contradicting the choice of~$\psi$.
\end{proof}

\section{The two-sided Karnofsky--Rhodes expansion}

The two-sided Karnofsky--Rhodes expansion plays a central role
in~\cite{Almeida&ACosta:2017} when studying equidivisible relatively
free profinite semigroups. We recall here this expansion.

We adopt the following notational conventions. Let $S$ be a semigroup.
Given a mapping $\varphi:A\to S$, we may denote the unique
homomorphism \mbox{$A^+\to S$} extending $\varphi$ also by $\varphi$.
Similarly, if $S$ is a pro-$\pv V$ semigroup, then the unique
continuous homomorphism $\Om AV\to S$ extending $\varphi: A\to S$ may
also be denoted by $\varphi$. Conversely, given a homomorphism
$\varphi:A^+\to S$, or a continuous homomorphism $\varphi:\Om AV\to
S$, its restriction to $A$ may also be denoted by $\varphi$, if no
confusion arises.

Let $\varphi$ be a homomorphism from $A^+$ onto a semigroup $S$. We
denote by $\Gamma_\varphi$ the \emph{two-sided Cayley graph}, whose
set of vertices is $S^I\times S^I$, and where an edge from $(s_1,t_1)$
to $(s_2,t_2)$ is a triple $((s_1,t_1),a,(s_2,t_2))$, with $a\in A$,
such that $s_1\varphi(a)=s_2$ and $t_1=\varphi(a)t_2$. We see
$\Gamma_\varphi$ as a labeled directed graph, by labeling each edge
$((s_1,t_1),a,(s_2,t_2))$ with the letter $a$. By the \emph{label} of
a directed path in $\Gamma_\varphi$ we mean the word obtained by
concatenating the successive labels of its edges.

A \emph{transition edge} of a directed graph is an edge $x\to y$ such
that there is no directed path from $y$ to~$x$.
For each path $p$ in the two-sided Cayley graph $\Gamma_\varphi$, we
denote by $T(p)$ the set of transition edges of $\Gamma_\varphi$ that
occur in~$p$. For each $u\in A^+$, let $p_u$ be the unique path
of~$\Gamma_\varphi$ from $(I,\varphi(u))$ to $(\varphi(u),I)$ that is
labeled by the word $u$. Consider the binary relation $\equiv_\varphi$
on $A^+$ defined by $u\equiv_\varphi v$ if $\varphi(u)=\varphi(v)$ and
$T(p_u)=T(p_v)$. Then one can easily check that $\equiv_\varphi$ is a
congruence, which is of finite index if both $S$ and $A$ are finite.
Consider the quotient semigroup $S_\varphi^\krho=A^+/{\equiv_\varphi}$
and the corresponding quotient homomorphism $\varphi^\krho:A^+\to
S_\varphi^\krho$.

We also consider the unique homomorphism $\pi_\varphi: S_\varphi^\krho\to S$
such that the following diagram commutes:

\begin{equation}
  \xymatrix@C=12mm@R=10mm{
    &**[r]{A^+} \ar[d]^\varphi \ar[ld]_{\varphi^\krho}\\
    **[r]{S_\varphi^\krho} \ar[r]_(0.6){\pi_\varphi}
    & **[r]{S.} }
\end{equation}

\begin{Remark}\label{r:properties-of-the-projection}
  Suppose that the homomorphism $\varphi:A^+\to S$ is such that $A$
  and $S$ are finite, so that so is $S_\varphi^\krho$. It is folklore
  that $\pi_\varphi$ is an $\pv{LI}$-morphism; in fact, for $\pv W=\op
  xyz=xz\cl$, it follows easily from the definition of
  $S_\varphi^\krho$ that $\pi_\varphi$ is a $\pv{W}$-morphism, and one
  clearly has $\pv W\subseteq \pv {LI}$.
\end{Remark}

The homomorphism $\varphi^\krho$ is the \emph{two-sided
  Karnofsky--Rhodes expansion} of $\varphi$, and $S_\varphi^\krho$ is a
\emph{two-sided Karnofsky--Rhodes expansion} of $S$. The correspondence
$(S,\varphi)\mapsto (S^\krho,\varphi^\krho)$ is an expansion cut to
generators~\cite{Rhodes&Steinberg:2001}. In fact, a more general
property holds, which we state in the next proposition.

\begin{Prop}\label{p:mario-branco-proposition}
  Let $\varphi:A^+\to S$
  and $\psi: B^+\to T$
  be two homomorphisms
  from finitely generated free semigroups onto
  finite semigroups.
  Suppose that the homomorphisms $\lambda:S\to T$
  and $\alpha:A^+\to B^+$
  are such that $\lambda\circ\varphi=\psi\circ\alpha$.
  Then, there is a unique homomorphism
  $\Lambda:S_\varphi^\krho\to T_\psi^\krho$ such that the diagram
  \begin{equation}\label{eq:mario-branco-proposition-1}
  \begin{split}  
    \xymatrix@R=6mm{
      **[r]{S_\varphi^\krho}\ar[dd]_{\pi_\varphi}\ar@{-->}[rrr]^{\Lambda}
      &
      &
      &**[r]{T_\psi^\krho}\ar[dd]^{\pi_\psi}
      \\
      &A^+\ar[dl]_\varphi\ar[r]^\alpha\ar[ul]_{\varphi^\krho}
      &B^+\ar[dr]^\psi\ar[ur]^{\psi^\krho}
      &
      \\
      S\ar[rrr]^\lambda
      &
      &
      &T
    }
  \end{split}    
  \end{equation}
   is commutative.
 \end{Prop}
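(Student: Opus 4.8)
\emph{The plan is to} obtain $\Lambda$ as a homomorphism induced on quotients. Since $S_\varphi^\krho=A^+/{\equiv_\varphi}$ and $\varphi^\krho$ is the surjective quotient homomorphism, it suffices to show that the homomorphism $\psi^\krho\circ\alpha\colon A^+\to T_\psi^\krho$ factors through $\varphi^\krho$, that is, that $u\equiv_\varphi v$ implies $\alpha(u)\equiv_\psi\alpha(v)$. Granting this, $\Lambda$ is the homomorphism determined by $\Lambda\circ\varphi^\krho=\psi^\krho\circ\alpha$; it is unique because $\varphi^\krho$ is onto, and the lower part of Diagram~\eqref{eq:mario-branco-proposition-1} commutes because
\begin{equation*}
  \pi_\psi\circ\Lambda\circ\varphi^\krho=\pi_\psi\circ\psi^\krho\circ\alpha=\psi\circ\alpha=\lambda\circ\varphi=\lambda\circ\pi_\varphi\circ\varphi^\krho
\end{equation*}
and $\varphi^\krho$ is onto. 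Everything thus reduces to the displayed implication.

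So I would fix $u\equiv_\varphi v$, which means $\varphi(u)=\varphi(v)$ and $T(p_u)=T(p_v)$. The equality $\psi(\alpha(u))=\lambda(\varphi(u))=\lambda(\varphi(v))=\psi(\alpha(v))$ is immediate from $\lambda\circ\varphi=\psi\circ\alpha$, so it remains to prove $T(p_{\alpha(u)})=T(p_{\alpha(v)})$, the transition edges now being computed in $\Gamma_\psi$. For this I would introduce a morphism $\Theta\colon\Gamma_\varphi\to\Gamma_\psi$ sending vertices to vertices and edges to directed paths. Extending $\lambda$ to $S^I\to T^I$ by $\lambda(I)=I$, set $\Theta(s,t)=(\lambda(s),\lambda(t))$ on vertices, and send an edge $((s,t),a,(s',t'))$ to the path of $\Gamma_\psi$ starting at $(\lambda(s),\lambda(t))$ and labeled by $\alpha(a)$. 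Using $\lambda\circ\varphi=\psi\circ\alpha$ one checks that this path exists, terminates at $(\lambda(s'),\lambda(t'))$, and that $\Theta$ carries $p_w$ to $p_{\alpha(w)}$ for every $w\in A^+$, the endpoints and label matching and $p_{\alpha(w)}$ being the \emph{unique} path with that label and those endpoints.

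The decisive point is that $\Theta$, being a graph morphism, preserves reachability: a directed path $x\to y$ in $\Gamma_\varphi$ yields one $\Theta(x)\to\Theta(y)$ in $\Gamma_\psi$. Consequently, if an edge $e$ of $\Gamma_\varphi$ is \emph{not} a transition edge---so that a path joins its endpoints in both directions---then $\Theta(e)$ stays within a single strongly connected component of $\Gamma_\psi$ and hence contains no transition edge. Writing $\tau(e)$ for the set of transition edges of $\Gamma_\psi$ lying on $\Theta(e)$, a set depending only on $e$, and decomposing $p_{\alpha(w)}=\Theta(p_w)$ into the blocks $\Theta(e)$ indexed by the edges $e$ of $p_w$, the fact that non-transition edges contribute nothing yields
\begin{equation*}
  T(p_{\alpha(w)})=\bigcup_{e\in T(p_w)}\tau(e).
\end{equation*}
The right-hand side depends on $w$ only through the set $T(p_w)$, so $T(p_u)=T(p_v)$ forces $T(p_{\alpha(u)})=T(p_{\alpha(v)})$, as required.

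\emph{I expect the main obstacle} to lie not in this reachability bookkeeping but in the careful verification that $\Theta$ is well defined on edges---that the path labeled $\alpha(a)$ issuing from $(\lambda(s),\lambda(t))$ genuinely exists in $\Gamma_\psi$ and ends at the predicted vertex---together with the check that block concatenation reproduces exactly $p_{\alpha(w)}$. Once these routine but fiddly verifications are in place, the equality of transition-edge sets, and hence the whole proposition, follows.
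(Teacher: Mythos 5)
Your proposal is correct and is essentially the paper's own argument in a cleaner packaging: the paper fixes a single transition edge $\tau$ of $\Gamma_\psi$ on $p_{\alpha(u)}$, traces it back to a transition edge $\tau'$ of $\Gamma_\varphi$ on $p_u$ (by the same contradiction you use to show that non-transition edges of $\Gamma_\varphi$ yield blocks free of transition edges), and then pushes $\tau'$ forward into $p_{\alpha(v)}$, which is exactly your formula $T(p_{\alpha(w)})=\bigcup_{e\in T(p_w)}\tau(e)$ read one edge at a time. The only point to make explicit in your ``fiddly verifications'' is that a path of $\Gamma_\psi$ is not determined by its start vertex and label alone (the second components of vertices propagate backwards from the end vertex), so $\Theta(e)$ should be defined as the unique path with the prescribed start, \emph{end} $(\lambda(s'),\lambda(t'))$ and label $\alpha(a)$ --- which exists and depends only on $e$ precisely because $\lambda\circ\varphi=\psi\circ\alpha$.
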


 The analog of Proposition~\ref{p:mario-branco-proposition} in the
 category of monoids appears as part of~\cite[Proposition
 4.4]{Branco:2006} (see also~\cite[Proposition 4.10]{Branco:2006}).
 Since the original proof is somewhat indirect, for the sake of
 completeness we present here a direct proof for the category of
 semigroups.

 \begin{proof}[Proof of Proposition~\ref{p:mario-branco-proposition}]
   Let $u,v\in A^+$.
   Suppose that $\varphi^\krho(u)=\varphi^\krho(v)$.
   We want to show that
   $\psi^\krho(\alpha(u))=\psi^\krho(\alpha(v))$.
   By the commutativity of the left triangle and of the lower
   trapezoid in Diagram~\eqref{eq:mario-branco-proposition-1},
   we know that $\psi(\alpha(u))=\psi(\alpha(v))$.
   We need to show that
   the coterminal paths $p_{\alpha(u)}$
   and $p_{\alpha(v)}$
   of $\Gamma_\psi$
   contain the same transition edges
   of $\Gamma_\psi$.

   Suppose that
   $\tau$ is a transition edge of
   $\Gamma_\psi$
   that occurs in the path $p_{\alpha(u)}$.
   Then, there are words $w_1,w_2\in B^*$
   and $b\in B$ such that
   $\alpha(u)=w_1bw_2$ and $\tau$
   is the edge
   \begin{equation*}
   \tau:(\psi(w_1),\psi(bw_2))\xrightarrow{b}(\psi(w_1b),\psi(w_2)).  
   \end{equation*}
   Moreover, there are $u_1,u_2\in A^*$, $a\in A$,
   and $w_1',w_2'\in B^*$
   such that
   $u=u_1au_2$, $w_1=\alpha(u_1)w_1'$, $\alpha(a)=w_1'bw_2'$
   and $w_2=w_2'\alpha(u_2)$. The reader may wish to refer to
   Figure~\ref{fig:comparing}.
   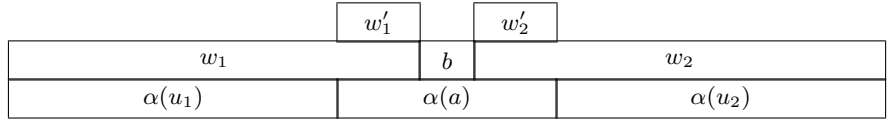
\begin{figure}[h]
     \centering
 \unitlength=2.05pt
 {\footnotesize
\begin{picture}(200,21)(-10,0)
\put(0,0){\framebox(60,7){$\alpha(u_1)$}}
\put(60,0){\framebox(40,7){$\alpha(a)$}}
\put(100,0){\framebox(60,7){$\alpha(u_2)$}}
\put(0,7){\framebox(75,7){$w_1$}}
\put(75,7){\framebox(10,7){$b$}}
\put(85,7){\framebox(75,7){$w_2$}}
\put(60,14){\framebox(15,7){$w_1'$}}
\put(85,14){\framebox(15,7){$w_2'$}}
%

\end{picture}
}
     \caption{Factorizations of $\alpha(u)$.}
     \label{fig:comparing}
   \end{figure}
   
   Note that the edge $\tau'$
   of $\Gamma_\varphi$ given by
   \begin{equation*}
   \tau':(\varphi(u_1),\varphi(au_2))\xrightarrow{a}(\varphi(u_1a),\varphi(u_2))
   \end{equation*}
   belongs to the path $p_u$ of $\Gamma_\varphi$.
   We claim that $\tau'$ is a transition edge of $\Gamma_\varphi$.
   Suppose it is not. Then, there is some $z\in A^+$
   such that
   \begin{equation*}
     \varphi(u_1az)=\varphi(u_1)\quad\text{and}\quad
     \varphi(u_2)=\varphi(zau_2).
   \end{equation*}
   By the commutativity of the lower trapezoid
   of Diagram~\eqref{eq:mario-branco-proposition-1},
   we get
   \begin{equation*}
      \psi(\alpha(u_1az))=\psi(\alpha(u_1))\quad\text{and}\quad
      \psi(\alpha(u_2))=\psi(\alpha(zau_2)).
    \end{equation*}
    Hence, we have
    \begin{align*}
      \psi(w_1b\cdot w_2'\alpha(z)w_1')&=
      \psi(\alpha(u_1)w_1'bw_2'\alpha(z)w_1')\\
      &=\psi(\alpha(u_1az)w_1')\\
      &=\psi(\alpha(u_1)w_1')\\
      &=\psi(w_1),
    \end{align*}
    and similarly
    \begin{equation*}
      \psi(w_2)=\psi(w_2'\alpha(z)w_1'\cdot bw_2).
    \end{equation*}
    Therefore, the graph $\Gamma_{\psi}$
    contains the path
   \begin{equation*}
     (\psi(w_1b),\psi(w_2))\xrightarrow{w_2'\alpha(z)w_1'}(\psi(w_1),\psi(bw_2)).
   \end{equation*}
   with contradicts $\tau$ being a transition
   edge of $\Gamma_{\psi}$.
   To avoid the contradiction,
   the edge $\tau'$ must be a transition edge
   of $\Gamma_\varphi$.
   Since the paths $p_u$
   and $p_v$
   of $\Gamma_\varphi$
   contain the same transition edges
   of $\Gamma_\varphi$,
   and in view of the commutativity
   of the lower
   trapezoid in Diagram~\eqref{eq:mario-branco-proposition-1},
   we conclude that
   \begin{equation*}
     (\psi(\alpha(u_1)),\psi(\alpha(au_2)))\xrightarrow{\alpha(a)}
     (\psi(\alpha(u_1a)),\psi(\alpha(u_2)))
   \end{equation*}
   is a path of $\Gamma_\psi$
   contained in the path
   $p_{\alpha(v)}$.
   But this path factorizes as
   {\footnotesize
   \begin{equation*}
     (\psi(\alpha(u_1)),\psi(\alpha(au_2)))
     \xrightarrow{w_1'}
     (\psi(w_1),\psi(bw_2))
     \xrightarrow{b}
     (\psi(w_1b),\psi(w_2))
     \xrightarrow{w_2'}
     (\psi(\alpha(u_1a)),\psi(\alpha(u_2))),
   \end{equation*}
 }%
 thus having $\tau$ as one of its edges.
   This shows that
   every transition edge
   of $\Gamma_\psi$
   that belongs
   to $p_{\alpha(u)}$
   also belongs to
   $p_{\alpha(v)}$.
   By symmetry,
   we conclude that
   $\psi^\krho(\alpha(u))=\psi^\krho(\alpha(v))$.
 
  Since, for every $u,v\in A^+$, the equality
  $\varphi^\krho(u)=\varphi^\krho(v)$
  implies the equality $\psi^\krho(\alpha(u))=\psi^\krho(\alpha(v))$,
  and since $\varphi^\krho$ is onto,
  we conclude that there exists a unique
  homomorphism $\Lambda$ such that
  Diagram~\eqref{eq:mario-branco-proposition-1} commutes.
\end{proof}

 A pseudovariety of semigroups $\pv V$ is said to be closed under
 two-sided Karnofsky--Rhodes expansion when $S\in\pv V$ implies
 $S_\varphi^\krho\in\pv V$, for every onto homomorphism $A^+\to S$ and
 every finite alphabet $A$. A proof of the following characterization
 of such pseudovarieties may be found in~\cite{Almeida&ACosta:2017},
 where one sees it as a direct consequence of a deep result of Rhodes
 et al.~\cite{Rhodes&Tilson:1989,Rhodes&Weil:1989b}

\begin{Thm}\label{t:characterization-of-pseudovarieties-closed-under-KR-expansion}
  A pseudovariety of semigroups $\pv V$
  is closed under two-sided Karnofsky--Rhodes expansion
  if and only if $\pv V=\pv {LI}\malcev\pv V$.
\end{Thm}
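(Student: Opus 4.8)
The plan is to establish the two implications separately. The implication ``$\pv V=\pv{LI}\malcev\pv V$ implies closure'' is the easy one and rests on Remark~\ref{r:properties-of-the-projection}, while the converse is the substantial one and rests on a universal property of the two-sided Karnofsky--Rhodes expansion. First I would treat the easy direction. Let $S\in\pv V$ and let $\varphi\colon A^+\to S$ be an onto homomorphism with $A$ a finite alphabet, so that $S_\varphi^\krho$ is finite. By Remark~\ref{r:properties-of-the-projection} the projection $\pi_\varphi\colon S_\varphi^\krho\to S$ is an $\pv{LI}$-morphism (indeed a $\pv W$-morphism for $\pv W=\op xyz=xz\cl\subseteq\pv{LI}$). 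Hence $S_\varphi^\krho$ lies in the generating class of $\pv{LI}\malcev\pv V$, and therefore $S_\varphi^\krho\in\pv{LI}\malcev\pv V=\pv V$, which is exactly closure under the expansion.

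For the converse, assume that $\pv V$ is closed under the two-sided Karnofsky--Rhodes expansion. The inclusion $\pv V\subseteq\pv{LI}\malcev\pv V$ always holds, since for $S\in\pv V$ the identity of $S$ is an $\pv{LI}$-morphism (the preimage of each idempotent is a singleton, which is trivial). It remains to prove $\pv{LI}\malcev\pv V\subseteq\pv V$, and since $\pv V$ is a pseudovariety it suffices to show that every semigroup $R$ in the generating class of $\pv{LI}\malcev\pv V$ already belongs to $\pv V$. So fix an $\pv{LI}$-morphism $\psi\colon R\to T$ with $T\in\pv V$; replacing $T$ by $\psi(R)$, which is a subsemigroup of $T$ and hence in $\pv V$, I may assume that $\psi$ is onto. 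Next I would choose a finite alphabet $A$ and an onto homomorphism $\eta\colon A^+\to R$, and set $\varphi=\psi\circ\eta\colon A^+\to T$, which is again onto. By the assumed closure, $T_\varphi^\krho\in\pv V$.

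The argument would then be finished by the key claim that $\eta$ factors through $\varphi^\krho$; equivalently, that $u\equiv_\varphi v$ implies $\eta(u)=\eta(v)$, that is, ${\equiv_\varphi}\subseteq\ker\eta$ as congruences on $A^+$. Granting this, the identity map of $A^+$ induces an onto homomorphism $T_\varphi^\krho=A^+/{\equiv_\varphi}\to A^+/{\ker\eta}=R$, so that $R$ is a quotient of $T_\varphi^\krho\in\pv V$ and hence $R\in\pv V$, as required. The main obstacle is precisely this claim: it asserts that ${\equiv_\varphi}$ is the finest congruence contained in $\ker\varphi$ whose projection onto $T$ is an $\pv{LI}$-morphism, i.e.\ that the two-sided Karnofsky--Rhodes expansion dominates \emph{every} $\pv{LI}$-extension of $T$ cut to the generators $A$, not merely the $\pv W$-extension realized by $\pi_\varphi$ itself. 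This domination does not seem to follow from the mere functoriality recorded in Proposition~\ref{p:mario-branco-proposition}; it requires comparing the transition edges of $\Gamma_\varphi$ with the $\pv{LI}$-kernel of $\psi$, which is the content of the deep results of Rhodes et al.\ \cite{Rhodes&Tilson:1989,Rhodes&Weil:1989b}, as worked out in \cite{Almeida&ACosta:2017}.
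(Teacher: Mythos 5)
First, note that the paper does not prove this theorem at all: it is imported from \cite{Almeida&ACosta:2017}, where it is obtained from the kernel-category results of Rhodes--Tilson and Rhodes--Weil, so there is no internal proof to match your attempt against. Your easy direction is correct and is the expected argument: $\pi_\varphi$ is an $\pv{LI}$-morphism by Remark~\ref{r:properties-of-the-projection}, so $S_\varphi^\krho$ lies in the class generating $\pv{LI}\malcev\pv V=\pv V$. The converse, however, contains a genuine gap: the claim you reduce it to, namely that ${\equiv_\varphi}\subseteq\ker\eta$ whenever $\psi\colon R\to T$ is an onto $\pv{LI}$-morphism, $\eta\colon A^+\to R$ is onto and $\varphi=\psi\circ\eta$, is not just ``hard to extract from the deep results'' --- it is false. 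Take $T$ trivial, $A=\{a,b\}$, and let $R=A^+/\theta$, where $\theta$ identifies two words exactly when both have length at least $4$ and they share the same prefix of length $2$ and the same suffix of length $2$. This $\theta$ is a congruence of finite index, $R$ satisfies $x^\omega yx^\omega=x^\omega$, so $R\in\pv{LI}$ and the unique homomorphism $\psi\colon R\to T$ is an $\pv{LI}$-morphism. But in $\Gamma_\varphi$ the only vertex that is neither a source nor a sink is $(1,1)$, every edge from $(1,1)$ to itself fails to be a transition edge, and so for $|u|\ge 2$ the transition edges of $p_u$ record only the first and last letters of~$u$. Hence $aaab\equiv_\varphi abbb$ while $\eta(aaab)\ne\eta(abbb)$, so ${\equiv_\varphi}\not\subseteq\ker\eta$; in fact $R$ does not even divide $T_\varphi^\krho$, since the latter satisfies $xyz=xz$ (Remark~\ref{r:properties-of-the-projection}) and $R$ does not. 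A single application of the expansion to the trivial semigroup only produces members of $\op xyz=xz\cl$, which is strictly smaller than $\pv{LI}=\pv{LI}\malcev\pv I$, so no one-step domination argument of the kind you propose can close the converse.

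What the closure hypothesis actually buys, and what the cited results deliver, is \emph{iteration}: since $\pv V$ is closed under the expansion, every iterate $T_\varphi^{\krho^n}$ remains in $\pv V$, and the correct domination statement is that an $A$-generated semigroup admitting an $\pv{LI}$-morphism onto $T$ is a quotient of $T_\varphi^{\krho^n}$ for $n$ sufficiently large --- equivalently, in the form the paper records later, $(\Om AV)^{\krho^\omega}=\Om A{(LI\malcev V)}$. In the counterexample above, $R$ is a quotient of the second iterate, which already remembers prefixes and suffixes of length~$2$. Your outline would survive with ``$T_\varphi^\krho$'' replaced by ``some $T_\varphi^{\krho^n}$'' throughout, but that strengthened claim is precisely the nontrivial content of the theorem; as stated, your reduction aims at a statement that is provably false, not merely unproved.
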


\section{KR-covers}
\label{sec:kr-cover}

We are now ready to introduce the following new definition, playing a
central role in this paper.

\begin{Def}[KR-cover of a finite semigroup]
  \label{def:KR-cover-of-finite-semigroup}
  Let $S$ be a profinite semigroup, and let $T$ be a finite semigroup.
  We say that $S$ is a \emph{KR-cover of $T$} when $T$ is a continuous
  homomorphic image of $S$ and for every continuous onto homomorphism
  $\varphi:S\to T$ there is a generating mapping $\psi:A\to T$, for
  some finite alphabet $A$ depending on $\varphi$, and a continuous
  homomorphism $\varphi_\psi:S\to T_\psi^\krho$ such that the
  following diagram commutes:
   \begin{equation}\label{eq:definition-KR-cover}
    \begin{split}
      \xymatrix@C=12mm@R=10mm{
         &S \ar[d]^\varphi \ar@{-->}[ld]_{\varphi_\psi}\\
        **[r]{T_\psi^\krho} \ar[r]_(0.6){\pi_\psi}& **[r]{T .} }
    \end{split}
  \end{equation}
\end{Def}

The generating mapping $\psi$ that appears in
Definition~\ref{def:KR-cover-of-finite-semigroup} may in fact be any
generating mapping of~$T$, as shown next. Thus, $S$ is in a sense more
general than all two-sided Karnofsky--Rhodes expansions of~$T$.

\begin{Lemma}\label{l:KR-cover-the-alphabet-may-change}
  Suppose that the profinite semigroup $S$ is a KR-cover
  of the finite semigroup $T$. Let
  $\varphi:S\to T$ be a continuous homomorphism
  from $S$ onto $T$. For every
  finite alphabet $A$ and generating
  mapping $\psi:A\to T$,
  there is a continuous homomorphism 
  $\varphi_\psi:S\to T_\psi^\krho$
  such that Diagram~\eqref{eq:definition-KR-cover} commutes.
\end{Lemma}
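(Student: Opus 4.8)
The plan is to leverage the flexibility built into the KR-cover definition, which already supplies \emph{one} suitable generating mapping, and then to transport the resulting factorization along a comparison morphism between two-sided Karnofsky--Rhodes expansions furnished by Proposition~\ref{p:mario-branco-proposition}.

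First I would invoke Definition~\ref{def:KR-cover-of-finite-semigroup} for the given continuous onto homomorphism $\varphi:S\to T$: it yields some finite alphabet $B$, a generating mapping $\psi':B\to T$, and a continuous homomorphism $\varphi_{\psi'}:S\to T_{\psi'}^\krho$ with $\pi_{\psi'}\circ\varphi_{\psi'}=\varphi$. The goal is then to build, starting from $\varphi_{\psi'}$, a continuous homomorphism $\varphi_\psi:S\to T_\psi^\krho$ for the \emph{arbitrary} given generating mapping $\psi:A\to T$.

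The bridge between the two expansions comes from a homomorphism $\alpha:B^+\to A^+$ chosen as follows: since $\psi(A)$ generates the finite semigroup $T$, for each $b\in B$ we may pick a word $\alpha(b)\in A^+$ with $\psi(\alpha(b))=\psi'(b)$; this defines $\alpha$ and gives $\psi\circ\alpha=\psi'$ as homomorphisms $B^+\to T$. Applying Proposition~\ref{p:mario-branco-proposition} with first homomorphism $\psi':B^+\to T$, second homomorphism $\psi:A^+\to T$, connecting homomorphism $\alpha:B^+\to A^+$, and $\lambda=\mathrm{Id}_T$ (so that $\lambda\circ\psi'=\psi\circ\alpha$ holds), I obtain a homomorphism $\Lambda:T_{\psi'}^\krho\to T_\psi^\krho$ satisfying $\pi_\psi\circ\Lambda=\pi_{\psi'}$. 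Setting $\varphi_\psi=\Lambda\circ\varphi_{\psi'}$ and computing $\pi_\psi\circ\varphi_\psi=\pi_\psi\circ\Lambda\circ\varphi_{\psi'}=\pi_{\psi'}\circ\varphi_{\psi'}=\varphi$ shows that Diagram~\eqref{eq:definition-KR-cover} commutes.

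There is no serious obstacle; the work is essentially bookkeeping. The only points requiring attention are: checking that all hypotheses of Proposition~\ref{p:mario-branco-proposition} are met ($A$, $B$ and $T$ are finite, and $\psi'$, $\psi$ are onto precisely because they are generating mappings of the finite semigroup $T$), that $\alpha$ is well defined (any choice of representing words works, and no coherence between these choices is needed, $B^+$ being free), and that $\varphi_\psi$ is continuous---which is immediate, since $\Lambda$ is a morphism between finite semigroups, hence continuous, and $\varphi_{\psi'}$ is continuous by construction.
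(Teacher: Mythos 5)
Your proposal is correct and follows essentially the same route as the paper's proof: extract one generating mapping from the KR-cover definition, use surjectivity of $\psi$ to build a homomorphism $\alpha:B^+\to A^+$ lifting the identity on $T$, apply Proposition~\ref{p:mario-branco-proposition} to get the comparison map $\Lambda$ between the two expansions, and compose. The only difference is notational (the paper calls your $\psi'$ by the name $\zeta$).
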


\begin{proof}
  Let $\psi:A^+\to T$ be any
  homomorphism from $A^+$ onto $T$, defined on a finite alphabet $A$.
  Because $S$ is a KR-cover of $T$, there is an onto
  homomorphism $\zeta:B^+\to T$, for some
  finite alphabet $B$,
  inducing a homomorphism
  $\varphi_\zeta:S\to T_\zeta^\krho$
  such that the leftmost triangle of
  the following diagram is commutative:
  \begin{equation}\label{eq:KR-cover-the-alphabet-may-change}
  \begin{split}
      \xymatrix@R=6mm{
      &
      **[r]{T_\zeta^\krho}\ar[dd]_{\pi_\zeta}\ar@{..>}[rrr]^{\Lambda}
      &
      &
      &**[r]{T_\psi^\krho}\ar[dd]^{\pi_\psi}
      \\
      S\ar[dr]_\varphi\ar[ur]^{\varphi_\zeta}
      &
      &B^+\ar[dl]_\zeta\ar@{-->}[r]^\alpha\ar[ul]_{\zeta^\krho}
      &A^+\ar[dr]^\psi\ar[ur]^{\psi^\krho}
      &
      \\
      &
      T\ar@{=}[rrr]^{\mathrm{Id}_T}
      &
      &
      &**[r]{T.}
    }
  \end{split}
  \end{equation}
  Since $\psi$ is onto, there is
  a homomorphism $\alpha:B^+\to A^+$
  such that $\zeta=\psi\circ\alpha$,
  that is, such the lower trapezoid in
  Diagram~\eqref{eq:KR-cover-the-alphabet-may-change} commutes.
  By Proposition~\ref{p:mario-branco-proposition},
  there is a homomorphism
  $\Lambda:T_\zeta^\krho\to T_\psi^\krho$
  such that Diagram~\eqref{eq:KR-cover-the-alphabet-may-change} commutes.
  Therefore, if $\varphi_\psi$ is the
  homomorphism $\Lambda\circ\varphi_\zeta$,
  then Diagram~\eqref{eq:definition-KR-cover} commutes.
\end{proof}

Letting $T$ vary in Definition~\ref{def:KR-cover-of-finite-semigroup},
we are led to the following stronger property.

\begin{Def}[KR-cover]
  \label{def:KR-cover-semigroup}
  A profinite semigroup $S$ is a \emph{KR-cover} if it is
  a KR-cover of each of its finite continuous homomorphic images.
\end{Def}

The notion of KR-cover is reminiscent of that of profinite projective semigroup,
which we recall here. Consider a pseudovariety $\pv V$ of semigroups.
A pro-\pv V semigroup $S$ is said to be \emph{\pv V-projective} if,
whenever $T$ and $R$ are pro-\pv V semigroups and $f:S\to T$ and
$g:R\to T$ are continuous homomorphisms with $g$ onto, there is
some continuous homomorphism $f':S\to R$ such that
the following diagram commutes:
\begin{equation*}
  \begin{split}
    \xymatrix{
    &S \ar[d]^f\ar@{-->}[ld]_{f'}\\
    R\ar[r]_g& **[r]{T.}}
  \end{split}
\end{equation*}
A profinite projective semigroup is just an $\pv S$-projective
semigroup, where $\pv S$ is the pseudovariety of all finite semigroups
(cf.~\cite[Remark 4.1.34]{Rhodes&Steinberg:2009qt}). Every free
pro-$\pv V$ semigroup is an example of a $\pv V$-projective semigroup.
 
The next simple observation gives our first examples of KR-covers and
provides a more precise connection with projectivity.

\begin{Prop}
  \label{p:projective-are-KR-covers}
  If\/ $\pv V$ is a semigroup pseudovariety
  closed under two-sided Karnofsky--Rhodes expansion, then every $\pv V$-projective
  semigroup is a KR-cover.
\end{Prop}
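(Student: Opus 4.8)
The plan is to reduce the definition of KR-cover directly to the $\pv V$-projectivity of $S$, using the standing hypothesis on $\pv V$ only to ensure that the relevant two-sided Karnofsky--Rhodes expansion is again a pro-$\pv V$ semigroup, and hence an admissible target for the projective lifting. So fix a finite continuous homomorphic image $T$ of $S$ together with a continuous onto homomorphism $\varphi:S\to T$. First I would record that, since $S$ is pro-$\pv V$ and $T$ is a finite continuous quotient of it, we have $T\in\pv V$ (a finite continuous quotient of a pro-$\pv V$ semigroup is a quotient of a subsemigroup of a finite product of members of $\pv V$, hence itself a member of~$\pv V$).

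Next I would choose a convenient generating mapping: take any finite alphabet $A$ and an onto homomorphism $\psi:A^+\to T$ (for instance $A=T$ with $\psi$ extending the identity), and form the two-sided Karnofsky--Rhodes expansion $T_\psi^\krho$ with its projection $\pi_\psi:T_\psi^\krho\to T$. The key use of the hypothesis enters here: because $\pv V$ is closed under two-sided Karnofsky--Rhodes expansion and $T\in\pv V$, the finite semigroup $T_\psi^\krho$ also belongs to $\pv V$, and in particular it is a pro-$\pv V$ semigroup. (By Lemma~\ref{l:KR-cover-the-alphabet-may-change} the particular choice of $\psi$ is immaterial, so no loss of generality arises from fixing one.)

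The heart of the argument is then a single application of $\pv V$-projectivity. I would instantiate the defining diagram of $\pv V$-projectivity with $R=T_\psi^\krho$, with $f=\varphi:S\to T$, and with $g=\pi_\psi:T_\psi^\krho\to T$, having first checked that $\pi_\psi$ is onto, which follows at once from the factorization $\psi=\pi_\psi\circ\psi^\krho$ together with surjectivity of~$\psi$. Since $S$ is $\pv V$-projective and all of $S$, $T_\psi^\krho$, $T$ are pro-$\pv V$, there is a continuous homomorphism $\varphi_\psi:S\to T_\psi^\krho$ with $\pi_\psi\circ\varphi_\psi=\varphi$; this is exactly the commutativity required in Diagram~\eqref{eq:definition-KR-cover}, so $S$ is a KR-cover of~$T$. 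As $T$ was an arbitrary finite continuous homomorphic image of~$S$, it follows that $S$ is a KR-cover. I do not expect any genuinely hard step: the only point that truly requires the hypothesis on $\pv V$, and the one I would verify with care, is the membership $T_\psi^\krho\in\pv V$, since without it $T_\psi^\krho$ would not be a legitimate target $R$ in the definition of $\pv V$-projectivity; everything else is bookkeeping.
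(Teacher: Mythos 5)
Your proposal is correct and follows essentially the same route as the paper: establish $T\in\pv V$, use closure under the two-sided Karnofsky--Rhodes expansion to get $T_\psi^\krho\in\pv V$, and then apply $\pv V$-projectivity with $R=T_\psi^\krho$ and $g=\pi_\psi$ to lift $\varphi$. The extra checks you flag (surjectivity of $\pi_\psi$, independence of the choice of $\psi$) are harmless but not needed beyond what the paper already records.
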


\begin{proof}
  Let $S$ be a $\pv V$-projective semigroup, and let $\varphi$ be a
  continuous homomorphism from $S$ onto a finite semigroup $T$. Note
  that $T\in\pv V$ (see, for instance,
  \cite[Proposition~3.7]{Almeida:2003cshort}).
  Consider any two-sided Karnofsky--Rhodes expansion $T_\psi^\krho$ of
  $T$, with $\psi:A\to T$ a generating mapping with finite domain. Since
  $T_\psi^\krho\in\pv V$ and $S$ is $\pv V$-projective, there is a
  continuous homomorphism $\varphi':S\to T_\psi^\krho$ such that
  $\pi_\psi\circ\varphi'=\varphi$.
\end{proof}

Proposition~\ref{p:projective-are-KR-covers}
is complemented by the next proposition.

\begin{Prop}\label{p:completely-simple-semigroups-are-KR-covers}
 Every profinite completely simple semigroup is a KR-cover.
\end{Prop}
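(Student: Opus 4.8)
The plan is to reduce the statement to a purely finite splitting problem for the projection $\pi_\psi$. Fix a finite continuous homomorphic image $T$ of the profinite completely simple semigroup $S$, say via a continuous onto homomorphism $\varphi\colon S\to T$, and choose any onto homomorphism $\psi\colon A^+\to T$ with $A$ a finite alphabet. Being a continuous homomorphic image of a completely simple semigroup, $T$ is itself completely simple. I claim it suffices to produce a homomorphic section $\sigma\colon T\to T_\psi^\krho$ of $\pi_\psi$, that is, a homomorphism with $\pi_\psi\circ\sigma=\mathrm{Id}_T$: indeed, setting $\varphi_\psi=\sigma\circ\varphi$ yields a continuous homomorphism with $\pi_\psi\circ\varphi_\psi=\varphi$, so that Diagram~\eqref{eq:definition-KR-cover} commutes, and Lemma~\ref{l:KR-cover-the-alphabet-may-change} frees us from the particular choice of $\psi$.

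To build the section I would first pass to the minimal ideal. Let $M$ be the minimal ideal of the finite semigroup $T_\psi^\krho$; it is completely simple. Since $T$ is simple, the ideal $\pi_\psi(M)$ of $T$ equals $T$, so $\pi_\psi$ restricts to an onto homomorphism $\theta:=\pi_\psi|_M\colon M\to T$ between completely simple semigroups. Moreover $\theta$ is an $\pv{LI}$-morphism: by Remark~\ref{r:properties-of-the-projection} each $\pi_\psi^{-1}(e)$ lies in $\pv{LI}$, and $\theta^{-1}(e)=\pi_\psi^{-1}(e)\cap M$ is a subsemigroup of it, whence also in $\pv{LI}$. A section of $\theta$ into $M\subseteq T_\psi^\krho$ is a section of $\pi_\psi$, so it remains to split an onto $\pv{LI}$-morphism $\theta\colon M\to T$ of completely simple semigroups.

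The key point, which dissolves the obstruction that normally blocks splitting an epimorphism of completely simple semigroups, is that such a $\theta$ is an \emph{isomorphism} on maximal subgroups. Pick an idempotent $f\in M$ with $e:=\theta(f)$ idempotent in $T$. Then $\theta$ maps the maximal subgroup $H_f=fMf$ onto $eTe=H_e$, and its kernel is a subgroup of $\theta^{-1}(e)\in\pv{LI}$, hence trivial because $\pv{LI}$ is aperiodic; thus $\theta|_{H_f}\colon H_f\to H_e$ is an isomorphism. Writing $M=\mathcal{M}(H;I_M,\Lambda_M;Q)$ and $T=\mathcal{M}(G;I,\Lambda;P)$ as Rees matrix semigroups, $\theta$ takes the form $(i,h,\lambda)\mapsto(\alpha(i),a_i\,\omega(h)\,b_\lambda,\beta(\lambda))$ with $\omega\colon H\to G$ the induced group isomorphism, surjections $\alpha\colon I_M\to I$ and $\beta\colon\Lambda_M\to\Lambda$, and correction factors $a_i,b_\lambda\in G$ subject to the compatibility identity $\omega(Q_{\lambda j})=b_\lambda\,P_{\beta(\lambda)\alpha(j)}\,a_j$. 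I would then choose set-theoretic sections $\bar\alpha\colon I\to I_M$ and $\bar\beta\colon\Lambda\to\Lambda_M$ of $\alpha$ and $\beta$ (available since these are surjections of finite sets) and define $\sigma(i',g,\lambda')=(\bar\alpha(i'),\,\omega^{-1}(a_{\bar\alpha(i')}^{-1}\,g\,b_{\bar\beta(\lambda')}^{-1}),\,\bar\beta(\lambda'))$. A direct check gives $\pi_\psi\circ\sigma=\theta\circ\sigma=\mathrm{Id}_T$.

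The hard part will be the verification that $\sigma$ is multiplicative, and this is where the whole argument really lives: the group obstruction is gone precisely because $\omega$ is an isomorphism (so $\omega^{-1}$ is available), and the only remaining content is that the sandwich matrices $Q$ and $P$ match along the chosen set-sections. Concretely, comparing $\sigma(i',g,\lambda')\sigma(j',k,\mu')$ with $\sigma\bigl((i',g,\lambda')(j',k,\mu')\bigr)$ reduces, after cancelling the common outer factors and applying $\omega$, to the single equality $b_{\bar\beta(\lambda')}^{-1}\,\omega(Q_{\bar\beta(\lambda')\,\bar\alpha(j')})\,a_{\bar\alpha(j')}^{-1}=P_{\lambda'j'}$, which is exactly the compatibility identity for $\theta$ evaluated at $\lambda=\bar\beta(\lambda')$ and $j=\bar\alpha(j')$. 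Hence $\sigma$ is a homomorphic section, its image is a completely simple subsemigroup of $T_\psi^\krho$ isomorphic to $T$ via $\pi_\psi$, and $\varphi_\psi=\sigma\circ\varphi$ witnesses that $S$ is a KR-cover of $T$. As $T$ was an arbitrary finite continuous homomorphic image, $S$ is a KR-cover.
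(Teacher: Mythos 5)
Your proof is correct, and it reaches the same key splitting statement as the paper by a genuinely different route. The paper isolates a coordinate-free Lemma~\ref{l:retraction-completely-simple}: for an onto $\pv A$-morphism $\pi:S\to T$ of finite semigroups and a $\J$-class $K$ of $T$ that is a subsemigroup, it locates a regular $\J$-class $J$ of $S$ over $K$, hand-picks a compatible family of idempotents $\gamma_f$ in $J$, generates a completely simple subsemigroup $K'$ from their $\H$-classes, and concludes via the injectivity of $\pv A$-morphisms on regular $\H$-classes together with a count of $\R$-, $\L$- and $\H$-classes. You instead specialize at once to the completely simple case, pass to the minimal ideal $M$ of $T_\psi^\krho$ (a clean substitute for the paper's choice of $J$, since simplicity of $T$ forces $\pi_\psi(M)=T$), and then split the resulting $\pv{LI}$-morphism $\theta:M\to T$ explicitly in Rees matrix coordinates, using the classical description of homomorphisms of Rees matrix semigroups over groups and the fact that aperiodicity of the idempotent fibers makes $\omega$ an isomorphism. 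Both arguments pivot on the same observation --- the fibers over idempotents contain no nontrivial subgroups, so the morphism is bijective on maximal subgroups --- but the paper's lemma is more general (the target need not be completely simple, only possess a $\J$-class that is a subsemigroup) and avoids coordinates, whereas your version is more computational and self-verifying: the compatibility identity $\omega(Q_{\lambda j})=b_\lambda P_{\beta(\lambda)\alpha(j)}a_j$ is exactly what makes your candidate section multiplicative, and the check you outline does go through. The only polish I would ask for is an explicit reference for the normal form of homomorphisms between Rees matrix semigroups (e.g.\ Clifford--Preston or Howie), and a one-line justification that $\theta$ is onto each maximal subgroup, namely $\theta(fMf)=\theta(f)\theta(M)\theta(f)=eTe$; also note that the definition of KR-cover only requires a single generating mapping $\psi$, so the appeal to Lemma~\ref{l:KR-cover-the-alphabet-may-change} is harmless but not needed.
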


Proposition~\ref{p:completely-simple-semigroups-are-KR-covers} follows
easily (explicit details are given below) from a simple property,
expressed in the next lemma.
Let $\pv A$ be the pseudovariety of all finite aperiodic semigroups.

\begin{Lemma}\label{l:retraction-completely-simple}
  If $\pi:S\to T$ is an onto \pv A-morphism of finite semigroups,
  and $K$ is a $\J$-class of $T$ which is a subsemigroup
  of $T$, then there is a subsemigroup $K'$
  of $S$ such that the restriction $\pi:K'\to K$
  is an isomorphism.
\end{Lemma}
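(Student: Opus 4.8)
The plan is to build $K'$ as a \emph{minimal} subsemigroup of $S$ that still maps onto $K$, and then to use minimality, together with aperiodicity of the morphism, to force $\pi|_{K'}$ to be injective. First I would isolate the one consequence of the hypothesis that $\pi$ is an \pv A-morphism that the argument needs: \emph{$\pi$ is injective on subgroups of $S$}. Indeed, if $H$ is a subgroup of $S$ with identity $e$, then the kernel of the group homomorphism $\pi|_H$ equals $\{h\in H:\pi(h)=\pi(e)\}$, which is contained in $\pi^{-1}(\pi(e))$; as $\pi(e)$ is idempotent, this preimage is aperiodic, and a subgroup of an aperiodic semigroup is trivial, so $\pi|_H$ is injective. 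Next, since $K$ is a \J-class that is a subsemigroup of the finite semigroup $T$, it is completely simple. Because $\pi$ is onto, the collection of subsemigroups $R$ of $S$ with $\pi(R)=K$ is nonempty (it contains $\pi^{-1}(K)$), so by finiteness I may pick one, $K'$, that is minimal for inclusion. By construction $\pi|_{K'}:K'\to K$ is onto, and it remains to prove it is injective.

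The second step is to check that $K'$ is itself completely simple. Let $J$ be the minimal ideal of $K'$. Then $\pi(J)$ is an ideal of $\pi(K')=K$, and since $K$ is simple this gives $\pi(J)=K$; minimality of $K'$ then forces $J=K'$, so $K'$ equals its own minimal ideal and is therefore completely simple. Choosing Rees coordinates $K'\cong\mathcal{M}(G';I',\Lambda';P')$, the kernel congruence of the onto homomorphism $\pi|_{K'}$ is governed by a normal subgroup $N$ of $G'$ together with equivalence relations on the index sets $I'$ and $\Lambda'$, and $\pi|_{K'}$ is injective precisely when all three are trivial.

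For the last step, the group part $N$ is trivial by the injectivity of $\pi$ on subgroups, applied to a maximal subgroup of $K'$. To see that the equivalence on $I'$ is trivial, I would argue by contradiction: if $\pi$ identified two distinct \R-classes of $K'$, say those indexed by $i\ne j$, then the union of the \R-classes indexed by $I'\setminus\{j\}$ would be a proper subsemigroup of $K'$; since every element of the discarded \R-class is congruent to an element of the \R-class of $i$, this subsemigroup would still map onto $K$, contradicting minimality. The left--right dual disposes of the equivalence on $\Lambda'$. Hence the kernel congruence is trivial and $\pi|_{K'}$ is an isomorphism onto $K$. I expect this last step to be the main obstacle: making the ``delete a redundant \R-class'' idea rigorous requires the structure theory of congruences on completely simple semigroups, and one must verify with care that deleting a class does not shrink the image below $K$.
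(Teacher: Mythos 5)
Your proof is correct, but it takes a genuinely different route from the paper's. The paper constructs $K'$ explicitly: it invokes a fact about relational coverings (\cite[Lemma 4.6.10]{Rhodes&Steinberg:2009qt}) to find a regular $\J$-class $J$ of $S$ with $\pi(J)=K$, then hand-picks compatible idempotents $\gamma_f$ over the idempotents of a fixed $\R$-class and $\L$-class of $K$, lets $K'$ be generated by the corresponding $\H$-classes, and concludes by counting $\R$-, $\L$- and $\H$-classes together with the fact that an \pv A-morphism is injective on regular $\H$-classes (\cite[Lemma 4.4.4]{Rhodes&Steinberg:2009qt}). You instead take $K'$ minimal among subsemigroups mapping onto $K$, observe it must be completely simple (its kernel already maps onto the simple semigroup $K$), and then kill the three components of the kernel congruence via the Rees--Suschkewitsch description of congruences on completely simple semigroups: the normal subgroup dies by injectivity of $\pi$ on subgroups (the only consequence of aperiodicity you need), and the index equivalences die by minimality, since a union of $\R$-classes (resp.\ $\L$-classes) of a completely simple semigroup is a subsemigroup and discarding an $\R$-class identified with another does not shrink the image. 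Your approach trades the paper's two imported lemmas from the $q$-theory book for the classical linked-triple theory of congruences on completely simple semigroups; it is arguably more self-contained and the minimality trick is clean, while the paper's version produces $K'$ constructively inside a specific $\J$-class of $S$. All the steps you flag as delicate do go through: $\pi^{-1}(K)$ is a subsemigroup mapping onto $K$, so the minimal $K'$ exists; the image of the kernel of $K'$ is an ideal of $K$; and every element of a discarded $\R$-class is congruent to one of the retained class, so the image is preserved.
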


\begin{proof}
  As the $\J$-class $K$ is regular,
  there is a regular $\J$-class
  $J$ of $S$ such that
  $\pi(J)=K$ and such that
  every element of $\pi^{-1}(K)$
  is a factor of the elements of $J$ (cf.~\cite[Lemma 4.6.10]{Rhodes&Steinberg:2009qt}). Moreover, since $K$ is a union of groups, $J$
  must be a union of groups, and each maximal subgroup
  of $S$ contained in $J$ is mapped by $\pi$
  onto a maximal subgroup of $T$ contained in~$K$.
  
  Fix an idempotent $e$ of~$K$.
  Let $X$ be
  the set of idempotents $\R$-equivalent to $e$,
  and let $Y$ be the set of idempotents $\L$-equivalent to $e$.
  Choose an idempotent $\gamma_e$ in $J$ such that $\pi(\gamma_e)=e$.
  If $f\in X\setminus\{e\}$ then,
  since $f=ef$, we may choose an idempotent $\gamma_f\in J$
  such that $\pi(\gamma_f)=f$ and $\gamma_f=\gamma_e\gamma_f$.
  Similarly, if $f\in Y\setminus\{e\}$
  then we may choose an idempotent $\gamma_f\in J$
  such that $\pi(\gamma_f)=f$ and $\gamma_f=\gamma_f\gamma_e$.
  Note that $X'=\{\gamma_f\mid f\in X\}$
  is contained in the $\R$-class of $\gamma_e$,
  and that $Y'=\{\gamma_f\mid f\in Y\}$
  is contained in the $\L$-class of $\gamma_e$.

  For each idempotent $g\in X'\cup Y'$, consider the $\H$-class $H_g$
  of $g$. Let $K'$ be the subsemigroup of $T$
  generated by $\bigcup_{g\in X'\cup Y'}H_g$. Then $K'$
  is a completely semigroup contained in $J$ and such that $\pi(K')\subseteq K$.
  Moreover, each $\H$-class $H$ of $K$ contains an element of the form
  $yx$, with $y\in Y$ and $x\in X$,
  and so if $H'$ is the $\H$-class of $\gamma_y\gamma_x\in K'$,
  then we have $\pi(H')=H$. Hence, we actually have $\pi(K')=K$.

  Since $K'$ has $|X|=|X'|$ $\R$-classes
  and $|Y|=|Y'|$ $\L$-classes,
  we know that $K'$ has the same number of $\H$-classes
  as $K$. On the other hand,
  the restriction of an \pv A-morphism
  to a regular $\H$-class is injective (cf.~\cite[Lemma 4.4.4]{Rhodes&Steinberg:2009qt}). We then conclude that $\pi$ restricts
  to an isomorphism $K'\to K$.
\end{proof}

\begin{proof}[Proof of Proposition~\ref{p:completely-simple-semigroups-are-KR-covers}]
  Let $S$ be a profinite completely simple semigroup, and let
  $\varphi:S\to T$ is a continuous homomorphism onto a finite
  semigroup~$T$. Then $T$ is a completely simple semigroup, and, for
  every generating mapping $\psi:A\to T$ such that $A$ is a finite
  alphabet, the homomorphism $\pi_\psi:T_\psi^\krho\to T$ is an \pv
  A-morphism (cf.~Remark~\ref{r:properties-of-the-projection}).
  Applying Lemma~\ref{l:retraction-completely-simple}, we deduce that
  there is a subsemigroup $T'$ of $T_\psi^\krho$ such that the
  restriction \mbox{$\pi_\psi|_{T'}:T'\to T$} is an isomorphism, whose inverse we
  denote $\rho$. Then the continuous homomorphism $\varphi_\psi:S\to
  T_\psi^\krho$ such that $\varphi_\psi=\rho\circ\varphi$ satisfies
  $\pi_\psi\circ\varphi_\psi=\varphi$. This establishes that $S$ is a
  KR-cover.
\end{proof}

By Proposition~\ref{p:projective-are-KR-covers},
the class of finite projective semigroups
includes examples of finite KR-covers
that are not among those provided by Proposition~\ref{p:completely-simple-semigroups-are-KR-covers}:
see~\cite[Lemma 4.1.39 and Exercise 4.1.43]{Rhodes&Steinberg:2009qt}
for simple examples of such kind.
  For a complete characterization of the finite projective semigroups, see~\cite{Zelenyuk:2003}. All finite projective semigroups are bands whose $\mathcal J$-classes
  form a chain (for a proof of this fact, alternative to~\cite{Zelenyuk:2003}
  and implicitly using the equidivisibility of the free profinite semigroup,
  see~\cite{Steinberg:2010}).

The following gives a necessary and sufficient condition for a
finite semigroup to be a KR-cover.

\begin{Prop}
  \label{p:finite-KR-covers}
  A finite semigroup $S$ is a KR-cover if and only if there are a
  finite set $A$, a generating mapping $\psi:A\to S$, and a
  homomorphism $\theta$ such that the following diagram commutes:
  \begin{equation}
    \label{eq:finite-KR-covers}
    \begin{split}
    \xymatrix@C=8mm@R=6mm{
      &S
      \ar[d]^{\mathrm{Id}}
      \ar@{-->}[ld]_{\theta}\\
      {S_\psi^\krho}
      \ar[r]_(0.6){\pi_\psi}
      & **[r]{S .} }
    \end{split}
  \end{equation}
  Moreover, then the same property holds for every generating
  mapping $\psi:A\to S$ with $A$ finite.
\end{Prop}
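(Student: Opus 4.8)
The plan is to prove the two implications separately and then read off the ``moreover'' clause from Lemma~\ref{l:KR-cover-the-alphabet-may-change}. Throughout, note that since $S$ is finite and carries the discrete topology, a continuous homomorphism out of $S$ is just a homomorphism, and the finite continuous homomorphic images of $S$ are exactly its homomorphic images. The necessity of the condition is immediate: if $S$ is a KR-cover in the sense of Definition~\ref{def:KR-cover-semigroup}, then in particular it is a KR-cover of itself (the identity witnesses that $S$ is a continuous homomorphic image of $S$), so Definition~\ref{def:KR-cover-of-finite-semigroup} applied with $T=S$ and $\varphi=\mathrm{Id}_S$ yields a generating mapping $\psi:A\to S$ and a homomorphism $\theta=\varphi_\psi:S\to S_\psi^\krho$ with $\pi_\psi\circ\theta=\mathrm{Id}_S$, which is precisely the commutativity of Diagram~\eqref{eq:finite-KR-covers}.

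For the converse, I would assume the existence of a generating mapping $\psi:A\to S$ (with $A$ finite) and a homomorphism $\theta$ splitting $\pi_\psi$, i.e.\ $\pi_\psi\circ\theta=\mathrm{Id}_S$, and show that $S$ is a KR-cover of every homomorphic image. So fix an onto homomorphism $\varphi:S\to T$ with $T$ finite. The key idea is to transport the splitting $\theta$ along $\varphi$ using Proposition~\ref{p:mario-branco-proposition}. Set $\chi=\varphi\circ\psi:A\to T$; this is again a generating mapping with finite domain, because $\langle\chi(A)\rangle=\varphi(\langle\psi(A)\rangle)=\varphi(S)=T$. Now apply Proposition~\ref{p:mario-branco-proposition} to the two onto homomorphisms $\psi:A^+\to S$ and $\chi:A^+\to T$ from the same free semigroup, taking $\alpha=\mathrm{Id}_{A^+}$ and $\lambda=\varphi$; the required relation $\lambda\circ\psi=\chi\circ\alpha$ holds by the very definition of $\chi$. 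This produces a homomorphism $\Lambda:S_\psi^\krho\to T_\chi^\krho$ with $\pi_\chi\circ\Lambda=\varphi\circ\pi_\psi$. Setting $\varphi_\chi=\Lambda\circ\theta$ then gives $\pi_\chi\circ\varphi_\chi=\varphi\circ\pi_\psi\circ\theta=\varphi$, so $\chi$ and $\varphi_\chi$ witness that $S$ is a KR-cover of $T$. As $\varphi$ (hence $T$) was arbitrary, $S$ is a KR-cover.

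Finally, the ``moreover'' assertion is a direct consequence of the previous two paragraphs together with Lemma~\ref{l:KR-cover-the-alphabet-may-change}: once we know $S$ is a KR-cover, it is a KR-cover of itself, and applying that lemma with $T=S$ and $\varphi=\mathrm{Id}_S$ gives, for \emph{every} finite alphabet $A$ and generating mapping $\psi:A\to S$, a homomorphism $\theta=\varphi_\psi:S\to S_\psi^\krho$ with $\pi_\psi\circ\theta=\mathrm{Id}_S$. I expect the only delicate point to be the correct instantiation of Proposition~\ref{p:mario-branco-proposition}: one must choose $\alpha$ to be the identity on $A^+$ and $\lambda=\varphi$, and recognize that the section $\theta$ of $\pi_\psi$ is pushed forward by $\Lambda$ to a section $\Lambda\circ\theta$ of $\pi_\chi$. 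Everything else---the finiteness/continuity bookkeeping and the verification that $\chi$ generates $T$---is routine.
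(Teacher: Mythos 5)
Your proposal is correct and follows essentially the same route as the paper: the forward direction is the trivial specialization to $T=S$, $\varphi=\mathrm{Id}_S$, and the converse transports the section $\theta$ of $\pi_\psi$ along the homomorphism $S_\psi^\krho\to T_{\varphi\circ\psi}^\krho$ supplied by Proposition~\ref{p:mario-branco-proposition} (with $\alpha=\mathrm{Id}_{A^+}$, $\lambda=\varphi$), exactly as in the paper's commutative diagram. Your appeal to Lemma~\ref{l:KR-cover-the-alphabet-may-change} for the ``moreover'' clause matches the paper's (more tersely stated) justification.
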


\begin{proof}
  The possibility of completing the diagram for every generating
  mapping $\psi:A\to S$ with $A$ finite follows directly from the
  assumption that $S$ is a finite semigroup and a KR-cover.
  Conversely, if the diagram can be completed for a generating mapping
  $\psi:A\to S$ with $A$ finite then, for every onto homomorphism
  $\varphi:S\to T$, we may consider the following commutative diagram,
  where $\theta$ is given by hypothesis and $\varphi^\krho$ by
  Proposition~\ref{p:mario-branco-proposition}:
  \begin{displaymath}
    \xymatrix@C=10mm@R=8mm{
      &S
      \ar[d]^{\mathrm{Id}}
      \ar@{-->}[ld]_{\theta}\\
      {S_\psi^\krho}
      \ar[r]_(0.6){\pi_\psi}
      \ar[d]_{\varphi^\krho}
      & S
      \ar[d]^\varphi
      \\
      T_{\psi\circ\varphi}^\krho
      \ar[r]^(0.55){\pi_{\varphi\circ\psi}}
      &T.
    }
  \end{displaymath}
  Hence, $S$ is a KR-cover.  
\end{proof}

An immediate consequence of Proposition~\ref{p:finite-KR-covers} is
that it is decidable whether a finite semigroup is a KR-cover.

If $S$ is a profinite semigroup, then the monoid $S^I$ is also a profinite semigroup, where we endow $S^I$ with the sum topology of $S$ with $\{I\}$.

\begin{Prop}\label{p:adjoining-identity-to-KR-cover}
  Let $S$ be a profinite semigroup. If $S$ is a KR-cover,
  then~$S^I$ is a KR-cover.
\end{Prop}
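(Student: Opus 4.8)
The plan is to verify the defining property of a KR-cover directly for $S^I$. Fix a continuous onto homomorphism $\varphi\colon S^I\to T$ with $T$ finite. Since $I$ is the identity of $S^I$, the element $1_T:=\varphi(I)$ is the identity of $T$, and $T_0:=\varphi(S)$ is a subsemigroup with $T=T_0\cup\{1_T\}$ (so either $T=T_0$ or $T=T_0^I$). As $T_0$ is a finite continuous homomorphic image of $S$ and $S$ is a KR-cover, $S$ is a KR-cover of $T_0$; hence there are a finite alphabet $A_0$, a generating mapping $\psi_0\colon A_0\to T_0$, and a continuous homomorphism $\eta_0\colon S\to (T_0)_{\psi_0}^\krho$ with $\pi_{\psi_0}\circ\eta_0=\varphi|_S$.

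Next I would fix a convenient generating mapping for $T$. Put $A=A_0\cup\{c\}$ with a fresh letter $c$, and let $\psi\colon A\to T$ extend $\psi_0$ (composed with the inclusion $T_0\hookrightarrow T$) by $\psi(c)=1_T$; then $\psi$ is a generating mapping of $T$. Applying Proposition~\ref{p:mario-branco-proposition} to the inclusions $\lambda\colon T_0\hookrightarrow T$ and $\alpha\colon A_0^+\hookrightarrow A^+$ (which satisfy $\lambda\circ\psi_0=\psi\circ\alpha$) yields a homomorphism $\Lambda\colon (T_0)_{\psi_0}^\krho\to T_\psi^\krho$ with $\pi_\psi\circ\Lambda=\lambda\circ\pi_{\psi_0}$ and $\Lambda\circ\psi_0^\krho=\psi^\krho\circ\alpha$; in particular $\Lambda$ sends the class of a $c$-free word $w\in A_0^+$ to $\psi^\krho(w)$.

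The core construction is then to pass to the local monoid at the idempotent $f:=(\psi^\krho(c))^{\omega}$, which satisfies $\pi_\psi(f)=1_T$, and to define the continuous homomorphism candidate $\theta\colon S\to T_\psi^\krho$ by $\theta(s)=f\cdot\Lambda(\eta_0(s))\cdot f$, extended to $\varphi_\psi\colon S^I\to T_\psi^\krho$ by $\varphi_\psi(I)=f$. Granting that $\theta$ is multiplicative, the relations $f\theta(s)=\theta(s)f=\theta(s)$ and $f^2=f$ make the extension by $I\mapsto f$ a continuous homomorphism, and the computations $\pi_\psi(\theta(s))=1_T\,\varphi(s)\,1_T=\varphi(s)$ together with $\pi_\psi(f)=1_T=\varphi(I)$ give $\pi_\psi\circ\varphi_\psi=\varphi$, so $\varphi_\psi$ is the required lift and $S^I$ is a KR-cover of $T$; letting $T$ vary finishes the proof.

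The hard part — indeed the only place needing real work — is showing that $\theta$ is a homomorphism. Writing $\Lambda(\eta_0(s))=\psi^\krho(w_s)$ with $w_s\in A_0^+$ $c$-free and $f=\psi^\krho(c^M)$ for $M$ large, multiplicativity reduces to the identity $\psi^\krho(c^M w_s c^M w_t c^M)=\psi^\krho(c^M w_s w_t c^M)$, that is, to the fact that a block of $c$'s occurring strictly between two nonempty $c$-free factors may be deleted. I would establish this by analysing the path $p_{c^M w_s c^M w_t c^M}$ in the two-sided Cayley graph $\Gamma_\psi$: since $\psi(c)=1_T$ acts as the identity of $T$, every $c$-edge whose first coordinate already lies in $T$ is a loop and hence not a transition edge, so an interior block of $c$'s contributes no transition edges; thus the two words have the same image and the same transition-edge set, whence the same $\equiv_\psi$-class. (Leading or trailing $c$'s, by contrast, do create a boundary transition edge, which is precisely why one must sandwich by $f$ on \emph{both} sides: this turns the $c$'s produced by $f$ into interior ones.) With this deletion lemma in hand, $\theta(s)\theta(t)=\psi^\krho(c^M w_s c^M w_t c^M)=f\,\Lambda(\eta_0(st))\,f=\theta(st)$, completing the argument.
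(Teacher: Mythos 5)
Your proof is correct, and its overall architecture matches the paper's: restrict $\varphi$ to $S$, invoke the KR-cover property for the image of $S$, adjoin a fresh letter mapping to $\varphi(I)$, land in the local monoid at the idempotent $f=(\psi^\krho(c))^{\omega}$, and extend to $S^I$ by $I\mapsto f$. The genuine difference lies in how the homomorphism $S\to f\,T_\psi^\krho\,f$ is produced. You apply Proposition~\ref{p:mario-branco-proposition} to the inclusions $A_0^+\hookrightarrow A^+$ and $T_0\hookrightarrow T$ and then sandwich by $f$ \emph{afterwards}, so you must prove multiplicativity by hand; this is your deletion lemma, verified by checking that an interior block of $c$'s contributes only loops of $\Gamma_\psi$ (because $\psi(c)$ is the identity of $T$), hence no transition edges. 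The paper instead applies Proposition~\ref{p:mario-branco-proposition} to the substitution $\alpha(a)=b^nab^n$, where $\Psi(b)=\Phi(I)$ and $x^n=x^\omega$ in $R_\Psi^\krho$: the resulting $\Lambda$ is then \emph{automatically} a homomorphism whose image lies in the local monoid $M=\Psi^\krho(b)^\omega\cdot R_\Psi^\krho\cdot\Psi^\krho(b)^\omega$, so the step you flag as ``the only place needing real work'' comes for free from the functoriality of the expansion, with no computation in the two-sided Cayley graph. Your route is more explicit about the underlying combinatorics; the paper's is shorter because it pushes the sandwiching into $\alpha$. One minor imprecision: a $c$-edge whose source has first coordinate in $T$ is a loop only if the \emph{second} coordinate of its target also lies in $T$ (the last $c$ of a word ending in $c$ has target second coordinate $I$ and is not a loop); since you invoke the claim only for interior blocks, where both conditions hold, and you separately account for the boundary transition edges, the argument stands as written.
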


\begin{proof}
  Given a profinite semigroup $S$ which is a KR-cover, let
  $\Phi:S^I\to R$ be a continuous homomorphism onto a finite semigroup
  $R$. Set $T=\Phi(S)$. Denote $\varphi$ the restriction of $\Phi$
  to~$S$, which is
  an onto continuous homomorphism $S\to T$. We may take a generating
  mapping $\psi:A\to T$ such that $A$ is finite. Since $S$ is a
  KR-cover, there is a continuous homomorphism $\rho:S\to
  T_\psi^\krho$ such that $\varphi=\pi_\psi\circ\rho$.

  Consider the alphabet $B=A\cup \{b\}$,
  for some letter $b$ not in $A$. Denote $\Psi$ the extension $B\to R$
  of $\psi$ such that
  \begin{equation*}
  \Psi(b)=\Phi(I).  
  \end{equation*}
  Then $\Psi$ generates $R$. Since $R_{\Psi}^\krho$ is finite, we may
  take a positive integer~$n$ such that $x^n=x^\omega$ for every $x\in
  R_{\Psi}^\krho$. Consider the homomorphism $\alpha:A^+\to B^+$ such
  that $\alpha(a)=b^nab^n$ for every $a\in A$. By
  Proposition~\ref{p:mario-branco-proposition}, there is a
  homomorphism $\Lambda:T_\psi^\krho\to R_{\Psi}^\krho$ such that
  Diagram~\eqref{eq:an-application-of-mario-branco-proposition}
  commutes.
  \begin{equation}
    \label{eq:an-application-of-mario-branco-proposition}
    \begin{split}  
    \xymatrix@R=6mm{
      **[r]{T_\psi^\krho}\ar[dd]_{\pi_\psi}\ar@{-->}[rrr]^{\Lambda}
      &
      &
      &**[r]{R_{\Psi}^\krho}\ar[dd]^{\pi_{\Psi}}
      \\
      &A^+
        \ar[dl]_\psi\ar[r]^\alpha
        \ar[ul]_(0.45){\psi^\krho}
      &B^+
        \ar[dr]^{\Psi}
        \ar[ur]^(0.45){\Psi^\krho}
      &
      \\
      **[r]{T\ }\ar@{^{((}->}[rrr]
      &
      &
      &R
        }
    \end{split}    
  \end{equation}
  For every $a\in A^+$, we have $\Psi^\krho\circ\alpha(a)=
  \Psi^\krho(b)^\omega \cdot \Psi^\krho(a) \cdot \Psi^\krho(b)^\omega$
  and so the image of $\Psi^\krho\circ\alpha$ is contained in the
  subsemigroup $M$ of $R_{\Psi}^\krho$ defined by
  \begin{equation*}
    M=\Psi^\krho(b)^\omega\cdot
    R_{\Psi}^\krho
    \cdot
    \Psi^\krho(b)^\omega.  
  \end{equation*}
  Since $\Lambda\circ\psi^\krho=\Psi^\krho\circ\alpha$ and
  $\psi^\krho$ is onto, it follows that the image of $\Lambda$ is
  contained in~$M$. Let $\theta=\Lambda\circ\rho$. Note that $M$ is a
  monoid, whose neutral element is $\Psi^\krho(b)^n$. Therefore, the
  homomorphism $\theta:S\to M$ extends to a monoid homomorphism
  $\widetilde\theta:S^I\to M$. The reader may wish to refer to
  Diagram~\eqref{eq:solution}.
 \begin{equation}\label{eq:solution}
   \begin{split}  
     \xymatrix@R=8mm@C=14mm{
      &S
      \ar@{^{((}->}[r]
      \ar[d]^\theta
      \ar[dl]_{\rho}
      &
      S^I
      \ar[d]^\Phi
      \ar@{-->}[dl]_{\widetilde\theta}\\
      T_\psi^\krho
      \ar[r]^\Lambda
      &R_\Psi^\krho
      \ar[r]^{\pi_\Psi}
      &R
    }
    \end{split}    
  \end{equation}

  If $s\in S$, then in view of the commutativity of
  Diagram~\eqref{eq:an-application-of-mario-branco-proposition}, we
  have
  \begin{equation*}
    \pi_{\Psi}\circ\widetilde\theta(s)=
    \pi_{\Psi}\circ\Lambda\circ\rho(s)=
    \pi_{\psi}\circ\rho(s)=
    \varphi(s)=\Phi(s).
  \end{equation*}
  On the other hand, we also have
  \begin{equation*}
    \pi_{\Psi}\circ\widetilde\theta(I)=
    \pi_{\Psi}(\Psi^\krho(b)^n)
    =\Psi(b)^n=\Phi(I)^n=\Phi(I).
  \end{equation*}
  Therefore, the equality $\pi_\Psi\circ\widetilde\theta=\Phi$ holds.
  This shows that Diagram~\eqref{eq:solution} is commutative and that
  $S^I$ is a KR-cover.
\end{proof}

\section{KR-covers are equidivisible}

We highlight the following property of KR-covers, to be shown below.

\begin{Thm}
  \label{t:equidiv-kar}
  Let $S$ be a profinite semigroup. If $S$
  is a KR-cover, then it is equidivisible.
\end{Thm}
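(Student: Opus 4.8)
The plan is to reduce equidivisibility of $S$ to a statement about finite quotients that can be certified through a single two-sided Karnofsky--Rhodes expansion, and then to establish equidivisibility of such expansions by transition-edge combinatorics.

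First, fix $u,v,x,y\in S$ with $uv=xy$ and consider the two closed subsets of $S^I$
\[
C_L=\{t\in S^I: ut=x,\ ty=v\},\qquad C_R=\{t\in S^I: xt=u,\ tv=y\},
\]
whose closedness follows from continuity of multiplication and compactness of $S^I$. Equidivisibility is exactly the assertion $C_L\cup C_R\ne\emptyset$. If both were empty, then, since $S$ is residually finite, the non-solvability of each of the two division problems would be witnessed in a finite continuous quotient; taking a common refinement, I would obtain a single continuous onto homomorphism $\varphi\colon S\to T$ with $T$ finite such that neither $\varphi(u)\,\xi=\varphi(x),\ \xi\,\varphi(y)=\varphi(v)$ nor $\varphi(x)\,\xi=\varphi(u),\ \xi\,\varphi(v)=\varphi(y)$ admits a solution $\xi\in T^I$. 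Thus it suffices to produce, for every such $\varphi$, a solution of one of these two systems in $T^I$.

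Next I would invoke the hypothesis. Since $T=\varphi(S)$ is a finite continuous homomorphic image of the KR-cover $S$, there are a finite alphabet $A$, a generating mapping $\psi\colon A\to T$, and a continuous homomorphism $\varphi_\psi\colon S\to T_\psi^\krho$ with $\pi_\psi\circ\varphi_\psi=\varphi$. Applying $\varphi_\psi$ to $uv=xy$ gives $\varphi_\psi(u)\varphi_\psi(v)=\varphi_\psi(x)\varphi_\psi(y)$ in $T_\psi^\krho$. If $T_\psi^\krho$ is equidivisible, there is $\mathbf t\in (T_\psi^\krho)^I$ realizing, say, $\varphi_\psi(x)=\varphi_\psi(u)\mathbf t$ and $\varphi_\psi(v)=\mathbf t\,\varphi_\psi(y)$; applying the extended homomorphism $\pi_\psi\colon (T_\psi^\krho)^I\to T^I$ yields $\xi=\pi_\psi(\mathbf t)\in T^I$ solving the first system, and symmetrically in the other case. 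Hence the whole theorem reduces to the claim that every two-sided Karnofsky--Rhodes expansion $T_\psi^\krho$ of a finite semigroup is equidivisible. To prove it I would work with word representatives in $A^+$ and exploit that $A^+$ is equidivisible while $\equiv_\psi$ is governed by $\psi$-values together with sets of transition edges. Given $[a][b]=[c][d]$ in $T_\psi^\krho$, write this as $ab\equiv_\psi cd$ with $a,b,c,d\in A^+$. It is then enough to establish the transfer property: whenever $w\equiv_\psi w'$ and $w=pq$ is a factorization in $A^+$, there is a factorization $w'=p'q'$ with $p\equiv_\psi p'$ and $q\equiv_\psi q'$. Indeed, applying this to $w=ab$, $w'=cd$ and the cut $w=a\cdot b$ produces $cd=p'q'$ in the free semigroup $A^+$ with $p'\equiv_\psi a$ and $q'\equiv_\psi b$; equidivisibility of $A^+$ then yields $r\in A^*$ comparing $p'$ and $c$, and the class $[r]\in (T_\psi^\krho)^I$ is readily checked to be the required common refinement of the two factorizations of $[ab]$.

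The main obstacle is the proof of the transfer property, and it is genuinely combinatorial. The difficulty is that the transition edges recorded by a factor are computed along its own path $p_p$, whereas the ambient word sees only the path $p_w$; comparing the two reachability conditions in $\Gamma_\psi$ shows that every transition edge of $p_w$ occurring in the first $|p|$ steps is a transition edge of $p_p$, but the converse may fail, since multiplying the suffix coordinate by $\psi(q)$ on the right can create return paths. Thus a factor may carry strictly more transition-edge information than is visible in context, and the argument must align these refined data. I would handle this by using the ordered sequence of transition edges of $p_w$ (the order being forced, since a transition edge cannot be revisited) to cut both $w$ and $w'$ into matching blocks, and then, inside each block lying between consecutive transition edges, use the rigidity coming from $\pi_\psi$ being a $\pv{W}$-morphism for $\pv W=\op xyz=xz\cl$ (Remark~\ref{r:properties-of-the-projection}) to move the internal cut point from its location in $w$ to a corresponding location in $w'$, producing $p'$ and $q'$ with the required equivalences.
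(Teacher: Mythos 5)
Your overall architecture is fine up to a point: the compactness reduction to finite quotients is sound (and is essentially a contrapositive rephrasing of the cofinality argument the paper uses), and it is correct that the KR-cover hypothesis hands you a continuous homomorphism $\varphi_\psi\colon S\to T_\psi^\krho$ through which the problem should be settled. The gap is the claim on which everything then rests: that $T_\psi^\krho$ is equidivisible, equivalently your ``transfer property''. This claim is false, for essentially every nontrivial expansion. Concretely, take $T$ the trivial semigroup $\{e\}$ and $A=\{a,b\}$. In $\Gamma_\psi$ no edge enters a vertex of the form $(I,\cdot)$ and no edge leaves a vertex of the form $(\cdot,I)$, so the first and last edges of every path $p_w$ are transition edges, while the middle edges are loops at $(e,e)$. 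Hence $T(p_{aab})=T(p_{ab})$, so $aab\equiv_\psi ab$, but the cut $aab=(aa)(b)$ admits no matching cut of $ab$: the only candidate is $ab=(a)(b)$, and $a\not\equiv_\psi aa$ because $T(p_a)$ is a singleton while $T(p_{aa})$ has two elements. The same computation shows $a\ne a^k$ in $T_\psi^\krho$ for all $k\ge 2$ (the first transition edge of $p_{a^k}$ ends at $(\psi(a),\psi(a^{k-1}))$ with $\psi(a^{k-1})\ne I$), so $a$ lies in no subgroup; since finite equidivisible semigroups are unions of groups (the Rees theorem cited in the paper from McKnight--Storey), $T_\psi^\krho$ can never be equidivisible once $A\ne\emptyset$. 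The difficulty you flag in your last paragraph --- that a factor carries strictly more transition-edge information than is visible in the ambient word --- is not a technical obstacle to be engineered around; it is the reason the statement fails.

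The repair is to weaken what you ask of $T_\psi^\krho$: you do not need a common refinement of the two factorizations \emph{in} $T_\psi^\krho$, only a solution $\xi\in T^I$ of one of the two division systems \emph{after projecting by} $\pi_\psi$. This is exactly the paper's Lemma~\ref{l:almost-equdivisibility}, and its proof is short: writing $uv=xy$ in $T_\psi^\krho$ with word representatives, the vertices $(\psi(\bar u),\psi(\bar v))$ and $(\psi(\bar x),\psi(\bar y))$ lie on two coterminal paths of $\Gamma_\psi$ having the same transition edges, so one of these vertices is reachable from the other, and the label of a connecting path gives the required $t\in T^I$ satisfying $\psi(\bar u)t=\psi(\bar x)$, $\psi(\bar v)=t\psi(\bar y)$ (or symmetrically). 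Feeding this into your compactness step yields the theorem. So the proposal is not correct as written, but it becomes the paper's proof once the false equidivisibility claim for $T_\psi^\krho$ is replaced by this weaker, projection-level statement.
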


Before the proof of Theorem~\ref{t:equidiv-kar}, we formulate an
improvement of the main theorem of~\cite{Almeida&ACosta:2017}. Let
\pv{CS} be the pseudovariety of all finite completely simple
semigroups, that is, of all finite semigroups with only one (nonempty)
ideal.

\begin{Thm}
  \label{t:equidivisible-pseudovarieties}
  The following conditions are equivalent for a pseudovariety~\pv V
  not contained in~\pv{CS}:
  \begin{enumerate}
  \item\label{item:equidivisible-pseudovarieties-1}
    for every finite alphabet $A$, \Om AV is equidivisible;
  \item\label{item:equidivisible-pseudovarieties-2} for every alphabet
    $A$, \Om AV is equidivisible;
  \item\label{item:equidivisible-pseudovarieties-3} the equality
    $\pv{LI\malcev V}=\pv V$ holds;
  \item\label{item:equidivisible-pseudovarieties-4} the pseudovariety
    \pv V is closed under two-sided Karnofsky--Rhodes expansion.
  \end{enumerate}
\end{Thm}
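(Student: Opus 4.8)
The plan is to establish the cycle of implications (\ref{item:equidivisible-pseudovarieties-2}) $\Rightarrow$ (\ref{item:equidivisible-pseudovarieties-1}) $\Rightarrow$ (\ref{item:equidivisible-pseudovarieties-3}) $\Rightarrow$ (\ref{item:equidivisible-pseudovarieties-4}) $\Rightarrow$ (\ref{item:equidivisible-pseudovarieties-2}), two links of which are already available. The implication (\ref{item:equidivisible-pseudovarieties-2}) $\Rightarrow$ (\ref{item:equidivisible-pseudovarieties-1}) is trivial, since finite alphabets are a special case of arbitrary ones. The equivalence (\ref{item:equidivisible-pseudovarieties-3}) $\Leftrightarrow$ (\ref{item:equidivisible-pseudovarieties-4}) is exactly Theorem~\ref{t:characterization-of-pseudovarieties-closed-under-KR-expansion}. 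Hence it remains to treat the two links (\ref{item:equidivisible-pseudovarieties-1}) $\Rightarrow$ (\ref{item:equidivisible-pseudovarieties-3}) and (\ref{item:equidivisible-pseudovarieties-4}) $\Rightarrow$ (\ref{item:equidivisible-pseudovarieties-2}).

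For (\ref{item:equidivisible-pseudovarieties-1}) $\Rightarrow$ (\ref{item:equidivisible-pseudovarieties-3}), I would quote the substantive direction of the main theorem of~\cite{Almeida&ACosta:2017}: if every finitely generated free pro-\pv V semigroup is equidivisible and \pv V is not contained in~\pv{CS}, then $\pv{LI\malcev V}=\pv V$. This is precisely where the standing hypothesis $\pv V\not\subseteq\pv{CS}$ enters. Indeed, when $\pv V\subseteq\pv{CS}$ the semigroups \Om AV are profinite completely simple, hence equidivisible (and in fact KR-covers, by Proposition~\ref{p:completely-simple-semigroups-are-KR-covers}), yet the equality $\pv{LI\malcev V}=\pv V$ typically fails; so conditions (\ref{item:equidivisible-pseudovarieties-1}) and (\ref{item:equidivisible-pseudovarieties-2}) can hold while (\ref{item:equidivisible-pseudovarieties-3}) and (\ref{item:equidivisible-pseudovarieties-4}) do not, which is exactly why that case is excluded from the statement.

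The genuinely new link, and the one that promotes the finite-alphabet result to arbitrary alphabets, is (\ref{item:equidivisible-pseudovarieties-4}) $\Rightarrow$ (\ref{item:equidivisible-pseudovarieties-2}). Here I would route the free semigroup through the KR-cover machinery developed above: for \emph{any} alphabet $A$, the free pro-\pv V semigroup \Om AV is \pv V-projective; since \pv V is closed under two-sided Karnofsky--Rhodes expansion by (\ref{item:equidivisible-pseudovarieties-4}), Proposition~\ref{p:projective-are-KR-covers} shows that \Om AV is a KR-cover, and Theorem~\ref{t:equidiv-kar} then yields that \Om AV is equidivisible. The decisive observation is that none of these three steps depends on the cardinality of $A$, so equidivisibility is obtained for every alphabet at once, closing the cycle and delivering all four equivalences. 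The deepest ingredient is the cited implication (\ref{item:equidivisible-pseudovarieties-1}) $\Rightarrow$ (\ref{item:equidivisible-pseudovarieties-3}), which I expect to be the main obstacle and which I would not reprove; the novelty is precisely that the projectivity of \Om AV, combined with Proposition~\ref{p:projective-are-KR-covers} and Theorem~\ref{t:equidiv-kar}, bypasses any direct combinatorial analysis of factorizations over an infinite alphabet.
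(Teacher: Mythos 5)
Your proposal is correct and follows essentially the same route as the paper: the equivalence of (\ref{item:equidivisible-pseudovarieties-1}) and (\ref{item:equidivisible-pseudovarieties-3}) is imported from~\cite{Almeida&ACosta:2017}, the equivalence of (\ref{item:equidivisible-pseudovarieties-3}) and (\ref{item:equidivisible-pseudovarieties-4}) is Theorem~\ref{t:characterization-of-pseudovarieties-closed-under-KR-expansion}, and the new link (\ref{item:equidivisible-pseudovarieties-4})~$\Rightarrow$~(\ref{item:equidivisible-pseudovarieties-2}) is obtained exactly as in the paper, via the \pv V-projectivity of \Om AV, Proposition~\ref{p:projective-are-KR-covers}, and Theorem~\ref{t:equidiv-kar}. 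The only cosmetic difference is that you arrange the implications as a cycle and so need only one direction of the cited equivalence.
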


\begin{proof}
  The equivalence of (\ref{item:equidivisible-pseudovarieties-1}) and
  (\ref{item:equidivisible-pseudovarieties-3}) is the main result
  of~\cite{Almeida&ACosta:2017}, while the equivalence of
  (\ref{item:equidivisible-pseudovarieties-3}) and
  (\ref{item:equidivisible-pseudovarieties-4}) is given by
  Theorem~\ref{t:characterization-of-pseudovarieties-closed-under-KR-expansion}.
  To finish the proof, just note that
  (\ref{item:equidivisible-pseudovarieties-4}) implies
  (\ref{item:equidivisible-pseudovarieties-2}) by
  Theorem~\ref{t:equidiv-kar} in view of
  Proposition~\ref{p:projective-are-KR-covers}, since
  free pro-\pv V semigroups are \pv V-projective.
\end{proof}

As in~\cite{Almeida&ACosta:2017}, we say that a pseudovariety \pv V is
\emph{equidivisible} if it satisfies
Property~(\ref{item:equidivisible-pseudovarieties-1}) of
Theorem~\ref{t:equidivisible-pseudovarieties}.

The proof of Theorem~\ref{t:equidiv-kar}
relies on the following lemma.

\begin{Lemma}\label{l:almost-equdivisibility}
  Let $T$ be a finite semigroup, and let
  $\psi:A^+\to T$ be an onto homomorphism, where $A$ is a finite alphabet.
  Suppose that $u,v,x,y\in T_\psi^\krho$
  are such that $uv=xy$.
  Then, there is $t\in T^I$
  such that at least one of the following situations occurs:
  \begin{enumerate}
  \item $\pi_\psi(u)t=\pi_\psi(x)$ and $\pi_\psi(v)=t\pi_\psi(y)$;
  \item $\pi_\psi(u)=\pi_\psi(x)t$ and $t\pi_\psi(v)=\pi_\psi(y)$.
  \end{enumerate}
\end{Lemma}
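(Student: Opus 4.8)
The plan is to reduce the statement to a pure reachability assertion in the two-sided Cayley graph $\Gamma_\psi$ and then exploit the decomposition of paths into strongly connected components (SCCs) cut out by transition edges. Since $\psi^\krho$ is onto, I would first choose representatives $u',v',x',y'\in A^+$ with $\psi^\krho(u')=u$, and so on. From $uv=xy$ we get $u'v'\equiv_\psi x'y'$, so the coterminal paths $p_{u'v'}$ and $p_{x'y'}$, both running from $(I,g)$ to $(g,I)$ with $g=\psi(u'v')=\psi(x'y')$, contain the same transition edges. Writing $P=\psi(u')$, $Q=\psi(v')$, $R=\psi(x')$, $S=\psi(y')$, the vertex reached along $p_{u'v'}$ after reading $u'$ is the ``midpoint'' $(P,Q)$, and similarly $(R,S)$ is the midpoint of $p_{x'y'}$; note that $\pi_\psi(u)=P$, $\pi_\psi(v)=Q$, $\pi_\psi(x)=R$, $\pi_\psi(y)=S$. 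The key observation is that a directed path from $(P,Q)$ to $(R,S)$ labelled by a word $z$ forces $R=P\psi(z)$ and $Q=\psi(z)S$, so that $t=\psi(z)\in T^I$ satisfies case~(1); symmetrically, a path from $(R,S)$ to $(P,Q)$ yields case~(2), with $t=I$ covering equal midpoints. Thus it suffices to show that, of the two midpoints, one is reachable from the other in $\Gamma_\psi$.

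Next I would recall that an edge of $\Gamma_\psi$ is a transition edge precisely when its endpoints lie in distinct SCCs: a non-transition edge $x\to y$ admits a return path $y\rightsquigarrow x$, placing $x$ and $y$ in the same SCC. Consequently the transition edges occurring along $p_{u'v'}$ cut it into maximal segments, each contained in a single SCC, and likewise for $p_{x'y'}$. The crucial point is that, because the two paths share the same set of transition edges, they pass through the \emph{same} chain of SCCs: each transition edge $e$ is a fixed edge with fixed endpoints, so its source- and target-SCCs are determined independently of which path contains it; these SCCs therefore form a single chain $C_0\to C_1\to\cdots\to C_\ell$ in the directed acyclic graph of strongly connected components (the condensation of $\Gamma_\psi$), and both paths must traverse it in the same order, since the acyclicity of the condensation forbids any other ordering. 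Hence the midpoint $(P,Q)$ lies in some $C_a$ and $(R,S)$ in some $C_b$, with $C_0,\dots,C_\ell$ totally ordered.

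The conclusion then follows by comparing $a$ and $b$: if $a\le b$, I route from $(P,Q)$ within $C_a$ to the source of $e_{a+1}$, across the successive transition edges $e_{a+1},\dots,e_b$ (using strong connectivity inside each intermediate component to reach the source of the next one), and finally within $C_b$ to $(R,S)$, obtaining a path $(P,Q)\rightsquigarrow(R,S)$ and hence case~(1); if $a\ge b$ the symmetric route yields case~(2). I expect the main obstacle to be precisely this middle step, namely the rigorous justification that equality of transition-edge sets forces both paths through one and the same totally ordered chain of components, with corresponding segments sitting in the same SCC; this is exactly where the defining property of the two-sided Karnofsky--Rhodes congruence is used, the remaining steps being a routine translation between reachability in $\Gamma_\psi$ and products in $T$.
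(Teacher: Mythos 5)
Your proposal is correct and follows essentially the same route as the paper: pass to word representatives, observe that the coterminal paths $p_{\bar u\bar v}$ and $p_{\bar x\bar y}$ share the same transition edges, deduce that one midpoint is reachable from the other in $\Gamma_\psi$, and translate that path into the element $t\in T^I$. The only difference is that the paper simply asserts the reachability of one midpoint from the other, whereas you supply the (correct) justification via the chain of strongly connected components determined by the common set of transition edges.
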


\begin{proof}
  Along the proof, the reader may wish to refer to the following commutative diagram:
  \begin{equation*}
      \xymatrix@C=12mm@R=10mm{
         &**[r]{A^+} \ar[d]^\psi \ar@{-->}[ld]_{\psi^\krho}\\
        **[r]{T_\psi^\krho} \ar[r]_(0.6){\pi_\psi}& **[r]{T .} }
  \end{equation*} 
  Let $\bar u,\bar v,\bar x,\bar y\in A^+$
  be such that $\psi^\krho(\bar u)=u$,
  $\psi^\krho(\bar v)=v$,
  $\psi^\krho(\bar x)=x$
  and $\psi^\krho(\bar y)=y$.
  The equality 
  \begin{equation*}
    \psi^\krho(\bar u\bar v)=\psi^\krho(\bar x\bar y)
  \end{equation*}
  means that $\psi(\bar u\bar v)=\psi(\bar x\bar y)$ and that, in the
  graph $\Gamma_{\psi}$, the coterminal paths $p_{\bar u\bar v}$ and
  $p_{\bar x\bar y}$ have the same transition edges. Since the pair
  $(\psi(\bar u),\psi(\bar v))$ is a vertex in the path $p_{\bar u\bar
    v}$ and $(\psi(\bar x),\psi(\bar y))$ is a vertex in the path
  $p_{\bar x\bar y}$, we know that at least one of the following
  situations occurs in the graph $\Gamma_{\psi}$:
  \begin{itemize}
  \item there is a (possibly empty) path
    from vertex $(\psi(\bar u),\psi(\bar v))$
    to vertex  $(\psi(\bar x),\psi(\bar y))$;
  \item there is a (possibly empty) path
    from vertex $(\psi(\bar x),\psi(\bar y))$
    to vertex  $(\psi(\bar u),\psi(\bar v))$.
  \end{itemize}
  In the first case, we have
  $\psi(\bar u)t=\psi(\bar x)$
  and
  $\psi(\bar v)=t\psi(\bar y)$, for some $t\in T^I$.
  It then suffices to note that, since $\pi_\psi\circ\psi^\krho=\psi$,
  we get $\pi_\psi(u)t=\pi_\psi(x)$ and $\pi_\psi(v)=t\pi_\psi(y)$.
  The second case is analogous.
\end{proof}

We are now ready to establish Theorem~\ref{t:equidiv-kar}.

\begin{proof}[Proof of Theorem~\ref{t:equidiv-kar}]
  Let $\{\varphi_{j,i}:S_j\to S_i\mid i,j\in I,\,i\leq j\}$ be an
  inverse system of homomorphisms between finite semigroups such that
  $S$ is its inverse limit. For each $i\in I$, the canonical
  projection $S\to S_i$ is denoted~$\varphi_i$. Let $u,v,x,y\in S$ be
  such that $uv=xy$. Take $i\in I$. Since $S$ is a KR-cover, there is
  a generating mapping $\theta:A\to S_i$, for some finite alphabet
  $A$, and a homomorphism ${(\varphi_i)}_\theta:S\to
  {(S_i)}_\theta^\krho$ such that the following diagram commutes:
  \begin{equation*}
  \xymatrix@C=15mm@R=10mm{
    &S \ar[d]^{\varphi_i}\ar@{-->}[ld]_{{(\varphi_i)}_\theta}\\
  **[r]{{(S_i)}_\theta^\krho}\ar[r]_(0.65){\pi_\theta}& **[r]{S_i .}
}
\end{equation*}
Then, we have $(\varphi_i)_\theta(uv)=(\varphi_i)_\theta(xy)$.
By Lemma~\ref{l:almost-equdivisibility},
there is $t_i\in S_i^I$
such that at least one of the following situations occurs:
\begin{enumerate}
\item $\varphi_i(u)t_i=\varphi_i(x)$ and
  $\varphi_i(v)=t_i\varphi_i(y)$;
\item  $\varphi_i(u)=\varphi_i(x)t_i$ and
  $t_i\varphi_i(v)=\varphi_i(y)$.
\end{enumerate}
Let $I_1$ (respectively, $I_2$) be the subset of elements $i$ of $I$
for which the first (respectively, second) situation occurs.
Since $I=I_1\cup I_2$, at least one of the
sets $I_1$ or $I_2$ is cofinal.
Without loss of generality, suppose
that $I_1$ is cofinal (note that, since the conjuction of
$i\in I_1$ and $k\leq i$ implies $k\in I_1$, we then actually have $I_1=I$).
By a standard compactness argument, we
conclude that there is $t\in S^I$
such that $ut=x$ and $v=ty$.
\end{proof}

The following result shows that the converse of
Theorem~\ref{t:equidiv-kar} fails.

\begin{Prop}
  \label{p:equidiv-not-KR-cover}
  Let $G^0=G\uplus\{0\}$ be the semigroup obtained by adjoining a zero
  to a finite group~$G$. Then $G^0$ is equidivisible, while $G^0$ is a
  KR-cover if and only if $G$ is trivial.
\end{Prop}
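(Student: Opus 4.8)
The plan is to treat the three assertions separately: equidivisibility, sufficiency of $G$ being trivial, and its necessity.

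For \emph{equidivisibility} I would argue straight from the definition, using that $G^0$ has no zero divisors beyond those forced by the zero: a product in $G^0$ equals $0$ if and only if one of its factors is $0$. Given $uv=xy$, if this common value lies in $G$ then all of $u,v,x,y$ lie in $G$, and working in the group $t=u^{-1}x\in G\subseteq(G^0)^I$ witnesses $ut=x$ and $ty=v$. If $uv=xy=0$, a short case analysis on which of $u,v,x,y$ equal $0$ produces a witness $t$ lying in $\{I,0\}\cup G$: when the factors forcing the zero are group elements one reduces again to the group, and otherwise $t=0$ absorbs the relevant side. This is routine and I would not dwell on it.

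When $G$ is \emph{trivial}, $G^0$ is the two-element semilattice $U_1=\{0,1\}$, and I would show $U_1$ is a KR-cover by first exhibiting it as a retract of the free profinite semigroup $\Om{\{a,b\}}{S}$. Let $\pi:\Om{\{a,b\}}{S}\to U_1$ be the continuous homomorphism with $\pi(a)=1$, $\pi(b)=0$, and define $\sigma:U_1\to\Om{\{a,b\}}{S}$ by $\sigma(1)=a^\omega$ and $\sigma(0)=(a^\omega b a^\omega)^\omega$. One checks that $\sigma$ is a continuous homomorphism (the only nonobvious relations are $a^\omega\sigma(0)=\sigma(0)=\sigma(0)a^\omega$, which hold since $a^\omega(a^\omega b a^\omega)=a^\omega b a^\omega=(a^\omega b a^\omega)a^\omega$) and that $\pi\circ\sigma=\mathrm{Id}_{U_1}$. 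As free profinite semigroups are $\pv S$-projective and a retract of a projective object is projective, $U_1$ is $\pv S$-projective; since $\pv S$ is trivially closed under two-sided Karnofsky--Rhodes expansion, Proposition~\ref{p:projective-are-KR-covers} shows $U_1$ is a KR-cover.

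The \emph{necessity} is the heart of the matter. Suppose $G$ is nontrivial yet $G^0$ is a KR-cover. By Proposition~\ref{p:finite-KR-covers} there are a finite generating mapping $\psi:A\to G^0$ and a homomorphism $\theta:G^0\to (G^0)_\psi^\krho$ with $\pi_\psi\circ\theta=\mathrm{Id}$. Call a letter a \emph{group letter} if its $\psi$-image lies in $G$ and a \emph{zero letter} otherwise; since products in $G^0$ vanish only through a factor $0$, a word maps into $G$ exactly when it avoids zero letters, and in the two-sided Cayley graph $\Gamma_\psi$ the first coordinate of a vertex, once equal to $0$, stays $0$, so every edge entering the region with first coordinate $0$ is a transition edge. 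The key lemma I would prove is: if $p,q\in(G^0)_\psi^\krho$ satisfy $\pi_\psi(p),\pi_\psi(q)\in G$ with $\pi_\psi(p)\neq\pi_\psi(q)$, then $pf\neq qf$ for every $f$ with $\pi_\psi(f)=0$. Applying it to $p=\theta(g)$ with $g\neq1$, $q=\theta(1)$ and $f=\theta(0)$ gives $\theta(g)\theta(0)\neq\theta(1)\theta(0)$, contradicting the fact that both equal $\theta(0)$ because $g\cdot0=1\cdot0=0$ in $G^0$.

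To prove the lemma I would choose representing words $\bar p,\bar q$ (which use only group letters) and $\bar f$, and let $z$ be the first zero letter of $\bar f$, preceded inside $\bar f$ by a group-letter block of value $h\in G$. Along $p_{\bar p\bar f}$ the edge reading $z$ is the transition edge from $(\pi_\psi(p)\,h,0)$ into the zero region, while along $p_{\bar q\bar f}$ the corresponding edge starts at $(\pi_\psi(q)\,h,0)$; since $\pi_\psi(p)\,h\neq\pi_\psi(q)\,h$ these edges differ. The main obstacle is then to confirm that this distinguishes the transition-edge sets, i.e. that $p_{\bar q\bar f}$ does not traverse the distinguished edge of $p_{\bar p\bar f}$ elsewhere: this uses that $z$, being a zero letter, cannot occur inside any group-letter block, so the edge reading the first zero letter is the \emph{unique} edge of each path passing from the $G$-region to the zero region. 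Hence $\bar p\bar f\not\equiv_\psi\bar q\bar f$, giving $pf\neq qf$ and completing the contradiction.
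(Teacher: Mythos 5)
Your proof is correct. For the main direction (necessity of $G$ being trivial) the underlying mechanism is the same as in the paper --- a transition edge of $\Gamma_\psi$ crossing from the group part into the zero ideal records the group element at the moment of crossing, and this is incompatible with a homomorphic section $\theta$ of $\pi_\psi$ when $G$ is nontrivial --- but your packaging is genuinely different. The paper fixes a generating mapping with a \emph{single} zero letter $b$, uses the fact that $\pi_\varphi^{-1}(0)$ satisfies $xyz=xz$ to describe its elements and idempotents explicitly as classes of words $ubv$ or $ubvbw$, and then shows $ubvbwt\equiv_\varphi ubvbw$ forces $\varphi(t)=1$ by comparing \emph{second} components at the \emph{second} occurrence of $b$; the contradiction comes from $\theta(0)\theta(g)=\theta(0)$. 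You instead work with an arbitrary generating mapping, prove a clean right-multiplication separation lemma ($\pi_\psi(p)\ne\pi_\psi(q)$ in $G$ implies $pf\ne qf$ whenever $\pi_\psi(f)=0$) by comparing \emph{first} components at the \emph{first} zero letter, and contradict $\theta(g)\theta(0)=\theta(1)\theta(0)$. Your route avoids any analysis of the idempotents in the zero fibre and the restriction to one zero letter, at the cost of having to argue that the crossing edge is the unique edge of each path entering the zero region --- which you do correctly. For the trivial case, the paper simply cites that the two-element semilattice is projective, whereas you exhibit it explicitly as a retract of $\Om {\{a,b\}}S$ via $\sigma(1)=a^\omega$, $\sigma(0)=(a^\omega b a^\omega)^\omega$; this is more work but self-contained, and the verification that $\sigma$ is a homomorphism is sound.
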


\begin{proof}
  It is easy to check that $G^0$ satisfies the definition of
  equidivisible semigroup. If $G$ is trivial, then $G^0$ is a
  two-element semilattice, which is well known to be projective,
  whence a KR-cover (cf.\
  \cite[Lemma~4.1.39]{Rhodes&Steinberg:2009qt}).

  The remainder of the proof consists in showing that $S=G^0$ is not a
  KR-cover when $G$ is a finite nontrivial group. For that purpose we
  let $\varphi:A\cup\{b\}\to S$ be a generating mapping, where
  $\varphi(A)\subseteq G$ (so that $\varphi(b)=0$). In view of
  Remark~\ref{r:properties-of-the-projection}, we know that
  $\pi_\varphi^{-1}(0)$ is a subsemigroup of $S^\krho_\varphi$
  satisfying the identity $xyz=xz$. It follows that every element of
  $\pi_\varphi^{-1}(0)$ is of one of the forms
  \begin{equation}
    \label{eq:equidiv-not-KR-cover-1}
    ubv/{\equiv_\varphi}
    \text{ or } ubvbw/{\equiv_\varphi},
    \text{ where } u,v,w\in A^*.
  \end{equation}
  Moreover, in both cases, the occurrences of $b$ label transition
  edges in the corresponding paths from $(I,0)$ to $(0,I)$: for
  instance for the first occurrence of $b$, the corresponding edge is
  of the form $(g,0)\to(0,s)$ with $g\in G$ and, therefore, it must be
  a transition edge; the argument is similar for the last occurrence
  of $b$ taking into account instead the second component of the
  vertices. Hence, the idempotents in $\pi_\varphi^{-1}(0)$ are the
  elements of the second form in~(\ref{eq:equidiv-not-KR-cover-1}).

  Next, we show that $ubvbwt\equiv_\varphi ubvbw$ with $u,v,w\in A^*$
  and $t\in A^+$ implies $\varphi(t)=1$. Indeed, as the paths
  $p_{ubvbwt}$ and $p_{ubvbw}$ use the same transition edges and the
  $b$'s label such edges, comparing the second components of the end
  vertex of the edge corresponding to the second $b$, we conclude that
  $\varphi(wt)=\varphi(w)$. Hence, $\varphi(t)$ is equal to the
  identity element of the group $G$.
  
  Suppose that there is a homomorphism $\theta:S\to S_\varphi^\krho$
  completing Diagram~(\ref{eq:finite-KR-covers}).  Let $g\in G$. As
  $\pi_\varphi(\theta(g))=g\ne0$, there is some $t\in A^+$ such that
  $\theta(g)=t/{\equiv_\varphi}$. Moreover, we have $\theta(0)\theta(g)=\theta(0)$.
  Since $\theta(0)$ is an idempotent in $\pi_\varphi^{-1}(0)$, we already know
  that it is of the form $ubvbw/{\equiv_\varphi}$ for some $u,v,w\in A^*$.
  It follows from the previous paragraph
  that $g=\pi_\varphi(\theta(g))=\pi_\varphi(t/{\equiv_\varphi})=\varphi(t)$ is the identity of $G$. This shows that $G$ is trivial.
\end{proof}

It is routine to check that an inverse quotient limit of equidivisible compact
semigroups is equidivisible. In the context of this paper, it is
worthy to record the following similar fact.

\begin{Prop}\label{p:inverse-limit-of-KR-covers}
  An inverse quotient limit of KR-covers is a KR-cover.
\end{Prop}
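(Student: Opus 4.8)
The plan is to verify the defining property of a KR-cover directly. Write $S=\varprojlim_{i\in I}S_i$, where each $S_i$ is a KR-cover, the connecting homomorphisms $\varphi_{j,i}:S_j\to S_i$ (for $i\le j$) are onto, and each projection $\varphi_i:S\to S_i$ is onto (this last point is exactly what the quotient hypothesis buys us). Let $\varphi:S\to T$ be an arbitrary continuous homomorphism onto a finite semigroup $T$. I must produce a finite alphabet $A$, a generating mapping $\psi:A\to T$, and a continuous homomorphism $\varphi_\psi:S\to T_\psi^\krho$ such that $\pi_\psi\circ\varphi_\psi=\varphi$.

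The central step is to factor $\varphi$ through one of the projections $\varphi_i$. Since $T$ is finite and discrete, the kernel congruence $\ker\varphi$ is a clopen subset of $S\times S$ (it is the finite union $\bigcup_{t\in T}\varphi^{-1}(t)\times\varphi^{-1}(t)$ of clopen sets), so its complement $C$ is compact. The congruences $\ker\varphi_i=(\varphi_i\times\varphi_i)^{-1}(\Delta_{S_i})$ are clopen, form a family directed downward under inclusion, and satisfy $\bigcap_{i\in I}\ker\varphi_i=\Delta_S\subseteq\ker\varphi$, whence $\bigcap_{i\in I}(\ker\varphi_i\cap C)=\emptyset$. By compactness of $C$ together with the directedness, some single $\ker\varphi_i\cap C$ is already empty, that is, $\ker\varphi_i\subseteq\ker\varphi$. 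As $\varphi_i$ is onto, this yields a homomorphism $\bar\varphi:S_i\to T$ with $\varphi=\bar\varphi\circ\varphi_i$; it is automatically continuous because $S_i$ is finite, and it is onto because $\varphi$ is.

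Finally I would invoke the hypothesis on $S_i$. Since $S_i$ is a KR-cover and $T$ is one of its finite continuous homomorphic images via $\bar\varphi$, the semigroup $S_i$ is a KR-cover of $T$; applying Definition~\ref{def:KR-cover-of-finite-semigroup} to the onto homomorphism $\bar\varphi$ produces a finite alphabet $A$, a generating mapping $\psi:A\to T$, and a continuous homomorphism $(\bar\varphi)_\psi:S_i\to T_\psi^\krho$ with $\pi_\psi\circ(\bar\varphi)_\psi=\bar\varphi$. Setting $\varphi_\psi=(\bar\varphi)_\psi\circ\varphi_i$ then gives a continuous homomorphism $S\to T_\psi^\krho$ satisfying $\pi_\psi\circ\varphi_\psi=\bar\varphi\circ\varphi_i=\varphi$, as required; since $T$ was an arbitrary finite continuous homomorphic image, $S$ is a KR-cover. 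The only genuine obstacle is the factorization in the middle paragraph, and even that is a routine compactness argument; the quotient hypothesis is used precisely to guarantee that $\varphi_i$ is onto, so that $\bar\varphi$ is defined on all of $S_i$ rather than merely on $\varphi_i(S)$.
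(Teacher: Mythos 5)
Your proof is correct and follows essentially the same route as the paper: factor $\varphi$ through some finite quotient $S_k$ using the surjectivity of the projections, apply the KR-cover property of $S_k$ to the induced onto homomorphism, and compose the resulting lift with the projection. The only difference is that you prove the factorization step directly by a compactness argument on kernels, where the paper simply cites \cite[Lemma~3.1.37]{Rhodes&Steinberg:2009qt}.
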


\begin{proof}
  Let $S=\varprojlim_{i\in I} S_i$ be an inverse quotient limit of the
  KR-covers $S_i$. For each $i\in I$, let $p_i$ be the canonical
  projection $S\to S_i$. Suppose that $\varphi:S\to T$ is a continuous
  homomorphism onto a finite semigroup $T$. Then there is $k\in I$ for
  which there is a factorization $\varphi=\varphi_k\circ p_k$ such
  that $\varphi_k:S_k\to T$ is a continuous onto homomorphism (see,
  for instance, \cite[Lemma~3.1.37]{Rhodes&Steinberg:2009qt}). As
  $S_k$ is a KR-cover, there is a finite alphabet $A$ and a generating
  mapping $\psi:A\to T$ for which there is a continuous homomorphism
  $\rho:S_k\to T_\psi^\krho$ satisfying $\varphi_k=\pi_\psi\circ\rho$.
  Since the continuous homomorphism $\varphi_\psi=\rho\circ p_k$
  satisfies $\pi_\psi\circ\varphi_\psi=\varphi$, we conclude that $S$
  is a KR-cover. The diagram
  \begin{displaymath}
    \xymatrix@C=18mm{
      S
      \ar[r]^\varphi
      \ar[d]_{p_k}
      \ar[rd]^(.75){\varphi_\psi}|!{[d];[r]}\hole
      &
      T
      \\
      S_k
      \ar[r]^\rho
      \ar[ru]^(.25){\varphi_k}
      &
      *[r]{\ T_\psi^\krho.}
      \ar[u]_{\pi_\psi}
    }
  \end{displaymath}
  may help visualizing the various homomorphisms involved in this
  proof.
\end{proof}

To finish this section we observe that a profinite KR-cover may not
be, up to isomorphism, an inverse limit of finite KR-covers. Indeed,
KR-covers are equidivisible by Theorem~\ref{t:equidiv-kar}. Now, by
\cite[Theorem~1.9]{McKnight&Storey:1969} (which is attributed to
Rees), elements of finite order of an equidivisible semigroup lie in
groups; in particular, finite
KR-covers are unions of groups and, therefore, so are their inverse
limits. On the other hand, the semigroups \Om AV of
Theorem~\ref{t:equidivisible-pseudovarieties} (that is, with \pv V
closed under two-sided Karnofsky--Rhodes expansion) are KR-covers by
Proposition~\ref{p:projective-are-KR-covers} but they are never unions
of groups since \pv V contains~\pv{LI}.

\section{Profinite coproducts of KR-covers}
 
In combination with Theorem~\ref{t:equidiv-kar}, the following
property provides a way of producing new examples of profinite
equidivisible semigroups.

\begin{Thm}
  \label{t:coproduct-KR-covers}
  For every pseudovariety of semigroups \pv V closed under two-sided
  Karnofsky--Rhodes expansion, the class of all pro-\pv V KR-covers is
  closed under \pv V-coproducts.
\end{Thm}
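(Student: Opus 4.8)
The plan is to verify the KR-cover property for $S_{\pv V}=\coprod^{\pv V}_{i\in I}S_i$ directly from its definition, exploiting the functoriality of the two-sided Karnofsky--Rhodes expansion encoded in Proposition~\ref{p:mario-branco-proposition}. Let $\varphi:S_{\pv V}\to T$ be a continuous homomorphism onto a finite semigroup, and fix a generating mapping $\psi:A\to T$ with $A$ finite (by Lemma~\ref{l:KR-cover-the-alphabet-may-change} the choice of $\psi$ is immaterial, so I am free to arrange a convenient one below). I must produce a continuous homomorphism $\varphi_\psi:S_{\pv V}\to T_\psi^\krho$ with $\pi_\psi\circ\varphi_\psi=\varphi$. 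Note first that $T\in\pv V$ and, because \pv V is closed under two-sided Karnofsky--Rhodes expansion, $T_\psi^\krho\in\pv V$ as well; hence $T_\psi^\krho$ is a legitimate target for the universal property of the \pv V-coproduct. This is the key structural point: the whole problem will be reduced to building compatible maps out of the individual factors $S_i$ and then invoking the universal property.

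The strategy is therefore to lift the problem factor by factor. For each $i\in I$, compose the inclusion $\varphi_i:S_i\hookrightarrow S_{\pv V}$ with $\varphi$ to obtain a continuous onto (or at least onto its image) homomorphism $\varphi\circ\varphi_i:S_i\to T_i$, where $T_i=\varphi(\varphi_i(S_i))$ is a finite subsemigroup of $T$. Since each $S_i$ is itself a pro-\pv V KR-cover, I can lift each of these to the relevant Karnofsky--Rhodes expansion of $T_i$. The difficulty is that the KR-cover property of $S_i$ produces a lift into $(T_i)^\krho_{\psi_i}$ for some expansion of $T_i$, not directly into $T_\psi^\krho$. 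To reconcile these, I would use Proposition~\ref{p:mario-branco-proposition} as a bridge: choosing a generating mapping for each $T_i$ that is compatible with $\psi$ (for instance, by picking, for each element of a generating set of $T_i$, a word in $A^+$ mapping to it under $\psi$, thereby defining an alphabet map $\alpha$), the proposition yields a homomorphism between the corresponding expansions, allowing the per-factor lift to be pushed into $T_\psi^\krho$. Concretely, I expect to build for each $i$ a continuous homomorphism $\eta_i:S_i\to T_\psi^\krho$ satisfying $\pi_\psi\circ\eta_i=\varphi\circ\varphi_i$.

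Once the compatible family $(\eta_i)_{i\in I}$ of continuous homomorphisms into the single pro-\pv V semigroup $T_\psi^\krho$ is in hand, the universal property of the \pv V-coproduct (Proposition~\ref{p:alternative-definition-of-coproduct}) furnishes a unique continuous homomorphism $\varphi_\psi:S_{\pv V}\to T_\psi^\krho$ with $\varphi_\psi\circ\varphi_i=\eta_i$ for every $i$. It then remains to check that $\pi_\psi\circ\varphi_\psi=\varphi$. This equality holds on each $\varphi_i(S_i)$ because $\pi_\psi\circ\varphi_\psi\circ\varphi_i=\pi_\psi\circ\eta_i=\varphi\circ\varphi_i$, and since $\bigcup_{i\in I}\varphi_i(S_i)$ generates a dense subsemigroup of $S_{\pv V}$ and both $\pi_\psi\circ\varphi_\psi$ and $\varphi$ are continuous homomorphisms, they must coincide everywhere. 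That completes the verification that $S_{\pv V}$ is a KR-cover of $T$, and letting $T$ range over all finite continuous homomorphic images shows $S_{\pv V}$ is a KR-cover.

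The main obstacle I anticipate lies in the second paragraph, namely in making the per-factor lifts land in one and the same expansion $T_\psi^\krho$ rather than in a disparate collection of expansions of the subsemigroups $T_i$. The honest technical work is to set up the alphabet and the generating mappings so that a single application of Proposition~\ref{p:mario-branco-proposition} mediates between the expansion $(T_i)^\krho$ provided by the KR-cover property of $S_i$ and the fixed target $T_\psi^\krho$; this requires choosing $\alpha:B_i^+\to A^+$ with $\psi\circ\alpha$ equal to the generating map of $T_i$, and checking the commutativity needed to feed the proposition. I would also need to confirm that each $\eta_i$ is genuinely continuous, which follows since it is a composite of continuous homomorphisms and a homomorphism between finite semigroups. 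No subtle point concerning $\pv V$ arises beyond the closure hypothesis, which is used exactly once and decisively, to guarantee $T_\psi^\krho\in\pv V$.
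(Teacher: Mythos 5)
Your proposal is correct and follows essentially the same route as the paper: per-factor lifts obtained from the KR-cover property of each $S_i$ applied to $\varphi\circ\varphi_i:S_i\to T_i$, Proposition~\ref{p:mario-branco-proposition} to push those lifts into a single expansion of $T$, the universal property of the \pv V-coproduct (legitimate because closure under two-sided Karnofsky--Rhodes expansion puts $T_\psi^\krho$ in \pv V), and a density argument for the final equality $\pi_\psi\circ\varphi_\psi=\varphi$. The only cosmetic difference is that the paper assembles the generating mapping of $T$ as a disjoint union of generating mappings of the images $T_i$, so that the alphabet map fed to Proposition~\ref{p:mario-branco-proposition} is just an inclusion $A_i^+\hookrightarrow A^+$, whereas you fix $\psi:A\to T$ in advance and construct $\alpha$ by choosing $\psi$-preimages; both choices satisfy the hypotheses of that proposition.
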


\begin{proof}
  Let $(S_i)_{i\in I}$ be a family of pro-\pv V KR-covers. Let
  $S$ be their \pv V-coproduct,
  with associated continuous
  homomorphisms $\varphi_i:S_i\to S$.

  Consider a continuous homomorphism $\psi:S\to T$ onto a finite
  semigroup~$T$ and let $\psi_i=\psi\circ\varphi_i$.
  Let $T_i$ be the image of $\psi_i$.
  Since $S_i$ is a
  KR-cover,
  there are a finite set $A_i$,
  a generating mapping $\delta_i:A_i\to T_i$,
  depending on $T_i$ only (not on $i$),
  and a continuous homomorphism $\beta_i$ such that
  the following diagram commutes:
  \begin{equation}
  \begin{split}
    \label{eq:slicing-T}
    \xymatrix@C=15mm{
      & S_i \ar[ld]_{\beta_i} \ar[d]^{\psi_i} \ar[r]^{\varphi_i}
      & S \ar[d]^\psi \\
      **[r]{{(T_i)}_{\delta_i}^\krho} \ar[r]^(.7){\pi_{\delta_i}}
      & T_i \ar@{^{((}->}[r] 
      & **[r]{T .}}
  \end{split}  
  \end{equation}
  We may assume that $T_i\neq T_j$ implies $A_i\cap A_j=\emptyset$,
  and we let $A=\bigcup_{i\in I}A_i$. Since $T$ is finite, the set
  $\{T_i\mid i\in I\}$ is finite; moreover, its union is~$T$ since the
  union of the images of the $\varphi_i$ generates a dense
  subsemigroup of~$S$. The union $\delta=\bigcup_{i\in I}\delta_i$ is
  then a generating mapping $A\to T$ with finite domain.

  By Proposition~\ref{p:mario-branco-proposition}, there are homomorphisms
  $\eta_i:{(T_i)}_{\delta_i}^\krho\to T_\delta^\krho$,
  with $\eta_i$ depending only on $T_i$,
  such that the
  lower rectangle of the following diagram commutes:
  \begin{equation}
  \begin{split}
    \label{eq:KR-closure}
        \xymatrix@C=13mm{
      & S \ar[rd]^\psi \ar@{-->}[d]_\beta
      & \\
      S_i \ar[ru]^{\varphi_i} \ar[rd]_{\beta_i}
      & **[r]{T_\delta^\krho} \ar[r]^{\pi_\delta}
      & T \\
      & **[r]{{(T_i)}_{\delta_i}^\krho} \ar[r]^(.63){\pi_{\delta_i}}
        \ar[u]_{\eta_i}
      & **[r]{T_i .} \ar@{^{((}->}[u]
    }
  \end{split}  
  \end{equation}
  Since $T$ is a finite continuous homomorphic image of the pro-$\pv
  V$ semigroup~$S$, we know that $T$ belongs to $\pv V$ (again, see,
  for instance, \cite[Proposition~3.7]{Almeida:2003cshort}). And since
  $\pv V$ is closed under two-sided Karnofsky--Rhodes expansion,
  $T_\delta^\krho$ also belongs to $\pv V$. Therefore, by the
  definition of $\pv V$-coproduct, the homomorphisms
  $\eta_i\circ\beta_i$ ($i\in I$) induce a unique continuous
  homomorphism $\beta:S\to T_\delta^\krho$ such that the left triangle
  in Diagram~\eqref{eq:KR-closure} commutes for each~$i\in I$. Then,
  taking also into account the commutativity of
  Diagram~\eqref{eq:slicing-T}, we deduce that for every $i\in I$ and
  every $s\in S_i$, the following chain of equalities holds:
  \begin{equation*}
    \psi\circ\varphi_i(s)=\psi_i(s)=\pi_{\delta_i}\circ\beta_i(s)
    =\pi_\delta\circ\eta_i\circ\beta_i(s)
    =\pi_\delta\circ\beta\circ\varphi_i(s).
  \end{equation*}
  Since $\bigcup_{i\in I}\varphi_i(S_i)$ generates a dense subsemigroup
  of~$S$, we conclude that
  $\psi=\pi_\delta\circ\beta$ (that is,  Diagram~\eqref{eq:KR-closure}
  commutes). This completes the proof that $S$ is a
  KR-cover.
\end{proof}

For a set $A$, one may consider the $A$-indexed \pv V-coproduct
$\coprod_{a\in A}^{\pv V}\{1\}$ of trivial semigroups. Note that it is
precisely the free object on $A$ in the category of
idempotent-generated pro-\pv V semigroups. By
Theorem~\ref{t:coproduct-KR-covers}, such semigroups are KR-covers
whenever \pv V is closed under two-sided Karnofsky--Rhodes expansion,
whence they are equidivisible by Theorem~\ref{t:equidiv-kar}.

\section{Letter super-cancellative equidivisible profinite semigroups}

In this section we completely characterize a class of equidivisible
profinite semigroups, defined by a cancellation property
(Definition~\ref{def:letter-super-cancellative}), that was considered
in~\cite{Almeida&ACosta:2017, Almeida&ACosta&Costa&Zeitoun:2019}.

\subsection{Letter super-cancellative semigroups}

In the following definition, we adopt the terminology
of~\cite{Almeida&ACosta:2017}.

\begin{Def}[Letter super-cancellative
  semigroup]\label{def:letter-super-cancellative}
  Let $S$ be a compact semigroup and suppose that $S$ is generated, as
  a topological semigroup, by a finite subset~$A$. Say that $S$ is
  \emph{letter super-cancellative} (with respect to~$A$) when, for
  every $a,b\in A$ and $u,v\in S^I$, the following holds: if we have
  $ua=vb$ or $au=bv$, then we have $a=b$ and $u=v$.
\end{Def}

\begin{Remark}\label{r:slc-independence-from-alphabet}
  As observed in~\cite[Lemma 6.1]{Almeida&ACosta&Costa&Zeitoun:2019},
if $S$ is letter super-cancellative with respect to $A$ and
also with respect to $B$, then $A=B$.   In \cite{Almeida&ACosta&Costa&Zeitoun:2019}, a letter super-cancellative semigroup is called \emph{finitely cancellable}.
\end{Remark}

\begin{Examp}
  \label{eg:equidivisible-pseudovariety}
  By~\cite[Proposition~6.3]{Almeida&ACosta:2017}, for an equidivisible
  pseudovariety \pv V not contained in~\pv{CS} and a finite set $A$,
  \Om AV is letter super-cancellative. In view of
  Theorem~\ref{t:equidivisible-pseudovarieties}, this holds precisely
  when \pv V is closed under two-sided Karnofsky--Rhodes expansion.
\end{Examp}

An \emph{epigroup} is a semigroup $S$ such that every element $x$
of~$S$ has some power $x^n$ lying in a subgroup of $S$, with $n$ a
positive integer. For example, finite semigroups and
completely simple semigroups are epigroups.
It is easy to see that no profinite epigroup is letter super-cancellative.
The argument extends to the following proposition.

\begin{Prop}
  \label{p:completely-simple-in-coproduct-non-cancellable}
  Let \pv V be a pseudovariety containing~\pv{Sl}. If a nonempty
  family of nontrivial pro-$\pv V$ semigroups includes some epigroup
  and the $\pv V$-coproduct of the family is finitely generated as a
  topological semigroup, then that \pv V-coproduct is not letter
  super-cancellative.
\end{Prop}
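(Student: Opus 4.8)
The plan is to argue by contradiction and reduce the statement to the fact that a nontrivial finitely generated profinite epigroup can never be letter super-cancellative. Suppose then that $S=\coprod^{\pv V}_{i\in I}S_i$ is letter super-cancellative, say with respect to the finite generating set $A$ (letter super-cancellativity presupposes such a set, so this is the only situation to rule out). Pick a member $S_{i_0}$ of the family that is an epigroup. Since $\pv V$ contains $\pv{Sl}$ and all the $S_i$ are nontrivial, Lemma~\ref{l:traces-of-generating-sets} applies and shows that $B:=A\cap S_{i_0}$ generates $S_{i_0}$ as a topological semigroup; in particular $B$ is finite and nonempty. By Proposition~\ref{p:the-factors-embed-in-the-free-product}, $S_{i_0}$ is a closed subsemigroup of $S$, and the inclusion extends to a monoid embedding $S_{i_0}^I\hookrightarrow S^I$ taking the adjoined identity to the adjoined identity. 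Hence any relation $ua=vb$ with $a,b\in B$ and $u,v\in S_{i_0}^I$ is also such a relation in $S$, so letter super-cancellativity of $S$ forces $a=b$ and $u=v$. Thus $S_{i_0}$ would be letter super-cancellative with respect to $B$, and it suffices to rule this out.

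So I would fix a nontrivial finitely generated profinite epigroup $R$, a finite generating set $B$, and a letter $a\in B$, and set $e=a^\omega$, an idempotent of $R$. Because $R$ is an epigroup, some power of $a$ lies in a subgroup, whose identity is the idempotent power of that element, namely $a^\omega=e$; hence $a^r\in G:=H_e$ for some integer $r\ge 1$, and I take $r$ minimal with this property. Recall that $a^{\omega+j}\in G$ for every integer $j$ (these elements form the procyclic subgroup generated by $a^{\omega+1}$, with identity $e$), while $a^{j}\notin G$ for $1\le j<r$ by the minimality of $r$.

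The idea is now to produce a failure of cancellation by the single letter $a$. Writing $a^{r}=a^{\,r-1}\cdot a$ and using that $e$ is the identity of $G$, so that $e\cdot a^{r}=a^{r}$, one obtains
\begin{equation*}
  a^{\omega+r-1}\cdot a=e\cdot a^{\,r-1}\cdot a=e\cdot a^{r}=a^{r}=a^{\,r-1}\cdot a .
\end{equation*}
Here $a^{\omega+r-1}\in G$, whereas $a^{\,r-1}$ lies outside $G$ when $r\ge 2$; for $r=1$ one reads $a^{\,r-1}$ as the adjoined identity $I\notin R$, and the displayed chain reduces to $e\cdot a=a=I\cdot a$. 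In either case the two left factors $a^{\omega+r-1}$ and $a^{\,r-1}$ are distinct elements of $R^I$, while right multiplication by the letter $a$ identifies them. This contradicts letter super-cancellativity of $R$ with respect to $B$, and therefore of $S_{i_0}$ with respect to $B$, completing the reduction and proving that $S$ is not letter super-cancellative.

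I expect the routine parts to be the reduction through Lemma~\ref{l:traces-of-generating-sets} and the transfer of letter super-cancellativity to the closed subsemigroup $S_{i_0}$. The point that needs care is the base case: one must correctly locate the least power $a^r$ of the generator that enters the maximal subgroup $H_{a^\omega}$ and verify that the intervening tail elements $a,\dots,a^{r-1}$ genuinely lie outside that subgroup, so that the cancellation $a^{\omega+r-1}\cdot a=a^{\,r-1}\cdot a$ really involves two different left factors. The borderline case $r=1$, where the generator itself already lies in a group, is precisely where the externally adjoined identity $I$ of $R^I$ is needed to witness the violation.
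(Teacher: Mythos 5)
Your proof is correct and takes essentially the same route as the paper's: both invoke Lemma~\ref{l:traces-of-generating-sets} to find a letter $a$ of the finite generating set lying in the epigroup factor $S_{i_0}$, and then use the fact that some power of $a$ falls into the maximal subgroup at $a^\omega$ to exhibit a failure of cancellation by the single letter $a$. The only differences are cosmetic: you first transfer the question to the closed subsemigroup $S_{i_0}$ and choose $r$ minimal so that the violation $a^{\omega+r-1}\cdot a=a^{r-1}\cdot a$ (with distinct left factors, one in $H_{a^\omega}$ and one not, or equal to $I$) is produced in a single step, whereas the paper argues directly in the coproduct from the relation $a^k=a^{\omega+k}$.
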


\begin{proof}
  Consider a nonempty family $(S_i)_{i\in I}$
  of nontrivial semigroups.
  Let $i_0\in I$ be such that $S_{i_0}$ is an epigroup.
  Let $A$ be a finite generating subset of
  the profinite semigroup
  $S=\coprod^\pv V_{i\in I}S_i$.
  By Lemma~\ref{l:traces-of-generating-sets}
  there is $a\in A\cap S_{i_0}$.
  Since $S_{i_0}$ is an epigroup, there is a positive integer $k$
  such that $a^k=a^{\omega+k}$.
  Since $a^k\cdot I=a^k\cdot a^{\omega-k}$
  but $I\neq a^{\omega-k}$,
  we conclude that $S$ is not letter super-cancellative with respect to $A$.
\end{proof}

\subsection{Strong KR-covers}
\label{sec:gkr-cover}

The following somewhat subtly strengthened version of KR-cover is
crucial in our main result of this section.

\begin{Def}[Strong KR-cover]
  Consider a profinite semigroup with a generating mapping
  $\kappa:A\to S$ such that $A$ is finite. Let $T$ be a continuous
  finite homomorphic image of $S$. We say that $S$ is a \emph{strong
    KR-cover of $T$ with respect to $\kappa$} if, for every continuous
  onto homomorphism $\varphi:S\to T$, there is a continuous
  homomorphism $\varphi_\kappa:S\to T_{\varphi\circ\kappa}^\krho$ such
  that the following diagram commutes:
\begin{equation}\label{eq:definition-StrongKR-cover}
\begin{split}
  \xymatrix@C=14mm@R=11mm{
    A\ar[r]^\kappa
    \ar[d]_{(\varphi\circ\kappa)^\krho}
    &S\ar[d]^\varphi\ar@{-->}[ld]_{\varphi_\kappa}\\
    **[r]{T_{\varphi\circ\kappa}^\krho}\ar[r]_(0.65){\pi_{\varphi\circ\kappa}}
    &
    **[r]{T.}
  }
\end{split}
\end{equation}
The profinite semigroup $S$ is a \emph{strong KR-cover of $T$}
if it is a strong KR-cover of $T$
with respect to some such $\kappa$.
Finally, $S$ is a \emph{strong KR-cover} if
it is a strong KR-cover of each of its finite continuous homomorphic images.
\end{Def}

\begin{Remark}
  \label{rmk:StrongKR-vs-KR}
  Every strong KR-cover (of a finite semigroup $T$)
  is a KR-cover (of $T$).
\end{Remark}

The definition of strong KR-cover is motivated by the following link
with the property of being letter super-cancellative.

\begin{Prop}\label{p:StrongKR-covers-are-finitely-cancellable}
  Let $S$ be a profinite semigroup with a generating mapping
  $\kappa:A\to S$, where $A$ is a finite alphabet. If $S$ is a strong
  KR-cover of the trivial semigroup with respect to~$\kappa$, then $S$
  is letter super-cancellative with respect to $\kappa(A)$.
\end{Prop}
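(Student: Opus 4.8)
Write $O=\{1\}$ for the trivial semigroup. There is a single continuous onto homomorphism $\varphi\colon S\to O$, and $\psi:=\varphi\circ\kappa$ is the unique homomorphism $A^+\to O$. The plan is to first pin down the expansion $O_\psi^\krho$. Since $O$ is trivial and $\pi_\psi$ is a $\pv W$-morphism for $\pv W=\op xyz=xz\cl$ (Remark~\ref{r:properties-of-the-projection}), the congruence $\equiv_\psi$ is exactly the free $\pv W$-congruence on $A^+$: one checks directly that, for $u,v\in A^+$, one has $u\equiv_\psi v$ precisely when $u$ and $v$ share the same first and the same last letter and are both of length one or both of length at least two. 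Hence $O_\psi^\krho$ is the free $\op xyz=xz\cl$-semigroup on $A$, whose elements are the one-letter classes $[a]$ and the ``first-last'' classes of longer words. Two features will be used repeatedly: distinct letters give distinct classes $[a]$, and every product of two elements of $O_\psi^\krho$ is the class of a word of length at least two; in particular each $[a]$ is indecomposable. The hypothesis now supplies a continuous homomorphism $\varphi_\kappa\colon S\to O_\psi^\krho$ with $\varphi_\kappa(\kappa(a))=[a]$ for every $a\in A$.

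I would first read off the equality of letters together with the indecomposability of generators. Extend $\varphi_\kappa$ to $S^I\to (O_\psi^\krho)^I$ and apply it to $u\kappa(a)=v\kappa(b)$, where $a,b\in A$. The last-letter map, a homomorphism from $O_\psi^\krho$ onto the right-zero semigroup on $A$, sends the two sides to $a$ and $b$, so $a=b$ and hence $\kappa(a)=\kappa(b)$, the map $\kappa$ being injective because distinct letters have distinct images $[a]$. Moreover each generator $\kappa(a)$ is indecomposable in $S$: if $\kappa(a)=st$ with $s,t\in S$, then $[a]=\varphi_\kappa(s)\,\varphi_\kappa(t)$ would be a product in $O_\psi^\krho$, hence the class of a word of length at least two, contradicting $[a]$ being a one-letter class.

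It remains to prove $u=v$, and here I would bring in equidivisibility of $S$. If $u=I$ or $v=I$, indecomposability already forces $u=v=I$, since otherwise one of $\kappa(a)=v\kappa(b)$ or $\kappa(b)=u\kappa(a)$ would decompose a generator. Otherwise $u,v\in S$, and equidivisibility applied to $u\kappa(a)=v\kappa(b)$ yields $t\in S^I$ with either $v=ut$ and $\kappa(a)=t\kappa(b)$, or $u=vt$ and $\kappa(b)=t\kappa(a)$. In the first case, applying $\varphi_\kappa$ to $\kappa(a)=t\kappa(b)$ gives $[a]=\varphi_\kappa(t)\,[b]$; were $t\in S$, the right-hand side would be a product, hence the class of a word of length at least two, contradicting the indecomposability of $[a]$. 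Thus $t=I$, so $u=v$. The second case, and the left-handed identity $au=bv$ (using first letters and the left-zero semigroup), are entirely symmetric, which finishes letter super-cancellativity with respect to $\kappa(A)$.

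The computation of $O_\psi^\krho$ and the deduction $\kappa(a)=\kappa(b)$ are routine once the $\op xyz=xz\cl$-structure is recognized. The main obstacle is the equality $u=v$: it cannot be extracted from $\varphi_\kappa$ alone, which only records the two boundary letters of an element, and it genuinely requires equidivisibility of $S$, so that matching one factorization against a single generator forces the overlap $t$ to be empty. The heart of the argument is thus to have equidivisibility available and to combine it with the rigidity---indecomposability of generators and injectivity of $a\mapsto[a]$---provided by the trivial-semigroup expansion.
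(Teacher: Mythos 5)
Your proof is correct and follows essentially the same route as the paper's: equidivisibility of $S$ reduces everything to showing that $\kappa(a)=t\kappa(b)$ forces $t=I$, and that is read off from the two-sided Karnofsky--Rhodes expansion of the trivial semigroup. Your explicit identification of $O_\psi^{\krho}$ as the free semigroup on $A$ over $\op xyz=xz\cl$, with one-letter classes indecomposable, is a concrete repackaging of the paper's transition-edge count (the path $p_a$ has a single transition edge while $p_{cub}$ has at least two), and your last-letter homomorphism together with the explicit treatment of the cases $u=I$ or $v=I$ are welcome refinements of details the paper leaves implicit. The one step to tighten is the provenance of equidivisibility: it is not literally part of the stated hypothesis, and the paper's proof obtains it at exactly this point by invoking Theorem~\ref{t:equidiv-kar} (a strong KR-cover of $T$ being in particular a KR-cover of $T$, cf.~Remark~\ref{rmk:StrongKR-vs-KR}); you should cite that result rather than assume equidivisibility outright.
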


\begin{proof}
  Let $x,y\in S$ and $a,b\in A$
  be such that $x\cdot\kappa(a)=y\cdot \kappa(b)$.
  We want to show that $x=y$ and that $\kappa(a)=\kappa(b)$.  
  Since,  by Theorem~\ref{t:equidiv-kar}, the
  semigroup $S$ is equidivisible,
  we know that there is $t\in S^I$
  such that
  $xt=y$ and
  $\kappa(a)=t\kappa(b)$,
  or such
  that
  $x=ty$
  and 
  $\kappa(a)t=\kappa(b)$.
   Without loss generality, we assume that
  $xt=y$ and
  $\kappa(a)=t\kappa(b)$.

  Arguing by contradiction, suppose $t\neq I$.
  Let $\varphi:S\to T$ be
  the continuous homomorphism from $S$ onto
  the trivial semigroup $T$.
  As~$S$ is a strong KR-cover of $T$, there is a
 continuous
    onto homomorphism
  $\varphi_\kappa:S\to T_{\varphi\circ\kappa}^\krho$
  such that Diagram~\ref{eq:definition-StrongKR-cover} commutes.
  Because $t\neq I$, we know that
  $\varphi_\kappa(\kappa(a))$ belongs to $T_{\varphi\circ\kappa}^\krho\cdot
\varphi_\kappa(\kappa(b))$.
Therefore,
and since $\varphi_\kappa\circ\kappa=(\varphi\circ\kappa)^\krho$,
there are $c\in A$ and $u\in A^*$
such that
$(\varphi\circ\kappa)^\krho(a)=(\varphi\circ\kappa)^\krho(cub)$.
The latter equality
means that
$\varphi(\kappa(a))=\varphi(\kappa(cub))$
and that
the coterminal paths
$p_{a}$
and $p_{cub}$
of the two-sided Cayley graph $\Gamma_{\varphi\circ\kappa}$
have the same transition edges.
But $p_a$
has length one,
while $p_{cub}$ has at least two distinct transition edges
of $\Gamma_{\varphi\circ\kappa}$, namely
the edges
\begin{equation*}
(I,\varphi(\kappa(cub)))\xrightarrow{c}
(\varphi(\kappa(c)),\varphi(\kappa(ub)))
\end{equation*}
and
\begin{equation*}
(\varphi(\kappa(cu)),\varphi(\kappa(b)))\xrightarrow{b}
(\varphi(\kappa(cub)),I).
\end{equation*}
We reached a contradiction, resulting
from assuming that $t\neq I$.
This shows that indeed we have~$x=y$ and $\kappa(a)=\kappa(b)$.

Symmetrically, if
$\kappa(a)\cdot x=\kappa(b)\cdot y$ holds,
then $x=y$ and $\kappa(a)=\kappa(b)$.
\end{proof}

\begin{Remark}
  \label{r:stonr-KR-cover-dependence-on-alphabet}
  In view of
  Proposition~\ref{p:StrongKR-covers-are-finitely-cancellable} and
  Remark~\ref{r:slc-independence-from-alphabet}, up to the name of
  generators, there can be only one injective generating mapping
  $\kappa:A\to S$  with respect to which the profinite semigroup $S$
  is a strong KR-cover.
\end{Remark}

Note that no finite semigroup is a strong KR-cover: indeed, finite
semigroups are epigroups and we already observed that epigroups are
not letter super-cancellative, while strong KR-covers of the trivial
semigroup are letter super-cancellative by
Proposition~\ref{p:StrongKR-covers-are-finitely-cancellable}. On the
other hand, there are several examples of finite KR-covers (see
Section~\ref{sec:kr-cover}).

More generally, for every pseudovariety of semigroups \pv V containing
\pv{Sl} and closed under two-sided Karnofsky--Rhodes expansion, if $S$
is the $\pv V$-coproduct of a nonempty finite family of finitely
generated pro-$\pv V$ semigroups which are KR-covers, with at least
one being an epigroup, then $S$ is a KR-cover which is not a strong
KR-cover, thanks to Theorem~\ref{t:coproduct-KR-covers} and also
Propositions~\ref{p:completely-simple-in-coproduct-non-cancellable}
and~\ref{p:StrongKR-covers-are-finitely-cancellable}.

Next is a complete characterization of the strong KR-covers.

\begin{Thm}
  \label{t:characterization-StrongKR}
  Let $S$ be a finitely generated profinite semigroup.
  The following conditions are equivalent:
  \begin{enumerate}
  \item $S$ is equidivisible and letter super-cancellative;\label{item:characterization-StrongKR-1}
  \item $S$ is a strong KR-cover;\label{item:characterization-StrongKR-2}
  \item $S$ is a KR-cover, and $S$ is a strong KR-cover of the trivial semigroup.\label{item:characterization-StrongKR-3}
  \end{enumerate}
\end{Thm}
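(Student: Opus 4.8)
The plan is to establish the cycle $(2)\Rightarrow(3)\Rightarrow(1)\Rightarrow(2)$, the first two implications being immediate consequences of results already proved and the whole difficulty being concentrated in $(1)\Rightarrow(2)$. For $(2)\Rightarrow(3)$ it suffices to recall that a strong KR-cover is, by definition, a strong KR-cover of each of its finite continuous homomorphic images, in particular of the trivial semigroup, and that by Remark~\ref{rmk:StrongKR-vs-KR} it is a KR-cover of each such image, hence a KR-cover. For $(3)\Rightarrow(1)$, a KR-cover is equidivisible by Theorem~\ref{t:equidiv-kar}, while a strong KR-cover of the trivial semigroup is letter super-cancellative by Proposition~\ref{p:StrongKR-covers-are-finitely-cancellable}.

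For $(1)\Rightarrow(2)$, assume $S$ is equidivisible and letter super-cancellative. By Remarks~\ref{r:slc-independence-from-alphabet} and~\ref{r:stonr-KR-cover-dependence-on-alphabet} I may fix a single injective generating mapping $\kappa\colon A\to S$, with $A$ finite, witnessing letter super-cancellativity, and aim to show that $S$ is a strong KR-cover of every finite image with respect to this $\kappa$. Fix a continuous onto homomorphism $\varphi\colon S\to T$ with $T$ finite and set $\psi=\varphi\circ\kappa$, regarded as a homomorphism $A^+\to T$, which is onto because $\kappa(A)$ generates a dense subsemigroup of $S$ and $T$ is finite. Let $\hat\kappa\colon\Om AS\to S$ be the continuous onto extension of $\kappa$ and let $\rho\colon\Om AS\to T_\psi^\krho$ be the continuous homomorphism extending $a\mapsto\psi^\krho(a)$. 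The map $\varphi_\kappa$ required in Diagram~\eqref{eq:definition-StrongKR-cover} exists if and only if $\rho$ factors through $\hat\kappa$, that is, $\ker\hat\kappa\subseteq\ker\rho$: the induced continuous homomorphism $\varphi_\kappa\colon S\to T_\psi^\krho$ then satisfies $\varphi_\kappa\circ\kappa=(\varphi\circ\kappa)^\krho$ on $A$, and since $\pi_\psi\circ\rho=\varphi\circ\hat\kappa$ with $\hat\kappa$ onto one gets $\pi_\psi\circ\varphi_\kappa=\varphi$, uniqueness following from the density of $\kappa(A)$. Thus everything reduces to the implication $\hat\kappa(u)=\hat\kappa(v)\Rightarrow\rho(u)=\rho(v)$ for $u,v\in\Om AS$.

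To prove this, write $s=\hat\kappa(u)=\hat\kappa(v)$. As $T_\psi^\krho$ is finite, $\rho(u)$ equals $\psi^\krho(w)$ for any word $w\in A^+$ close enough to $u$, and is thus determined by its image $\pi_\psi(\rho(u))=\varphi(s)$ in $T$ together with the set of transition edges of $\Gamma_\psi$ traversed by the path $p_w$; the same holds for $v$. Since the $T$-components coincide, it remains to show the two transition-edge sets agree. The first ingredient is that letter super-cancellativity, together with compactness and the finiteness of $A$, equips every $s\in S$ with a well-defined first letter $\pref(s)\in A$ and last letter $\suff(s)\in A$: a factorization $s=\kappa(a)\,s'$ with $a\in A$ and $s'\in S^I$ exists by extracting convergent subnets from words approximating $s$ and is unique by letter super-cancellativity, whence the maps $\pref,\suff\colon S\to A$ are continuous because $S$ is partitioned into the clopen sets $\kappa(a)S^I$ (and dually $S^I\kappa(a)$). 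The key lemma is that the transition edges traversed by $p_w$, for $w$ close to a pseudoword over $s$, can be described intrinsically in terms of $s$: each such edge arises from a factorization $s=p\,\kappa(a)\,q$ in $S$ (with $p,q\in S^I$ and $a\in A$) as the edge $(\varphi(p),\psi(a)\varphi(q))\xrightarrow{a}(\varphi(p)\psi(a),\varphi(q))$ of $\Gamma_\psi$, and whether it is a transition edge is a property of $s$, $\varphi$ and $\psi$ alone. Equidivisibility of $S$ is what guarantees that the factorizations of $s$ seen through $u$ and through $v$ admit common refinements, so that both pseudowords give rise to exactly the same family of such factorizations, while letter super-cancellativity pins down the letter $a$ occurring at each cut; hence $\rho(u)=\rho(v)$, as required.

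The main obstacle is precisely this last lemma: converting the combinatorial notion of transition edge of $\Gamma_\psi$ into an invariant of $s\in S$ and verifying that two pseudowords over $s$ yield the same invariant. The transparent instance is $T$ trivial, where $T_\psi^\krho$ records only the first and last letters of a word; there the required homomorphism is $s\mapsto\psi^\krho\bigl(\pref(s)\,\suff(s)\bigr)$, and $s\mapsto\psi^\krho(\pref(s))$ on $\kappa(A)$, which is well defined and continuous directly from the properties of $\pref$ and $\suff$ above, using that $\kappa(A)$ and $S\cdot S$ partition $S$ into clopen pieces; this already establishes that $S$ is a strong KR-cover of the trivial semigroup. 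The general case superimposes the transition structure of $\Gamma_\psi$ on this first/last-letter data, and it is in controlling which internal cuts produce transition edges, uniformly in the approximating word and simultaneously for $u$ and $v$, that equidivisibility and letter super-cancellativity must be used in tandem. Once the lemma is secured, Proposition~\ref{p:mario-branco-proposition} ensures compatibility of the expansions under change of generating alphabet, so that the construction does not depend on incidental choices, completing the proof that $S$ is a strong KR-cover.
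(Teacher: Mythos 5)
Your cycle $(2)\Rightarrow(3)\Rightarrow(1)\Rightarrow(2)$ is the right architecture, the two easy implications are handled exactly as in the paper, and your reduction of $(1)\Rightarrow(2)$ to the implication $\hat\kappa(u)=\hat\kappa(v)\Rightarrow\rho(u)=\rho(v)$ for $u,v\in\Om AS$ is also the paper's setup (with $\rho=\Phi^\krho$, $\Phi=\varphi\circ\kappa$). But there is a genuine gap: the entire technical content of the theorem is the ``key lemma'' that you state, motivate, and then explicitly defer (``Once the lemma is secured\ldots''). Saying that equidivisibility makes $u$ and $v$ ``give rise to exactly the same family of factorizations'' and that super-cancellativity ``pins down the letter at each cut'' is a description of what must be proved, not a proof of it: a priori the transition edges of $\Gamma_\Phi$ are defined via factorizations of the \emph{pseudowords} $u$ and $v$, which need not correspond under $\kappa$, and whether a given edge is a transition edge is a global property of the graph that does not obviously transfer along a common refinement in $S$. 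The one case you do carry out (trivial $T$, via $\pref$ and $\suff$) is correct but is precisely the case where the transition structure degenerates.

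For comparison, the paper's argument runs as follows. It orders the transition edges of $u$ and $v$ as sequences $(\varepsilon_i)_{i\le n}$, $(\delta_i)_{i\le m}$ and supposes $j$ is minimal with $\varepsilon_j\ne\delta_j$; minimality forces $\alpha(\varepsilon_j)$ and $\alpha(\delta_j)$ to lie in the same strongly connected component of $\Gamma_\Phi$. Lemma~\ref{l:factorization-around-a-transition-edge} produces factorizations $u=u_1au_2$ and $v=v_1bv_2$ realizing these edges; one then rules out $u_1=I$, $u_2=I$, etc., by super-cancellativity, applies equidivisibility in $S$ to $\kappa(u_1a)\kappa(u_2)=\kappa(v_1)\kappa(bv_2)$ to obtain a middle term $t$, and in each of the two resulting cases peels off letters from $t$ using super-cancellativity until one either exhibits a path in $\Gamma_\Phi$ from $\omega(\varepsilon_j)$ or $\omega(\delta_j)$ back into the common strongly connected component (contradicting the transition-edge property) or concludes $\varepsilon_j=\delta_j$. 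A final step uses $\omega(\varepsilon_n)=(\Phi(u),I)$ to get $m=n$. None of this case analysis, nor the device of comparing the \emph{first} differing transition edge so as to stay inside one strongly connected component, appears in your proposal; without it the claim that the two transition-edge sets agree is unsupported.
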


The proof of Theorem~\ref{t:characterization-StrongKR}, given in this
section, is inspired by the proof of~\cite[Theorem
8.3]{Almeida&ACosta:2017}.

Note that, for an equidivisible pseudovariety \pv V not contained
in~\pv{CS}, Property~(\ref{item:characterization-StrongKR-1}) of
Theorem~\ref{t:characterization-StrongKR} holds for \Om AV whenever
$A$ is finite (cf.\ Example~\ref{eg:equidivisible-pseudovariety}).
Thus, strong KR-covers may be seen as a generalization of finitely
generated free profinite semigroups over pseudovarieties closed under
two-sided Karnofsky--Rhodes expansion.

We recall the following definition used in~\cite{Almeida&ACosta:2017}.

\begin{Def}[Transition edge for a pseudoword]
  Let $A$ be a finite alphabet and $u\in\Om AS$.
Consider a continuous homomorphism $\varphi$ from $\Om AS$
onto a finite semigroup.
Suppose that $(u_n)_n$ is a sequence of elements of $A^+$
converging to $u$.
A \emph{transition edge} for $u$ in $\Gamma_\varphi$
is an edge of $\Gamma_\varphi$ which is a transition edge for $u_n$ in $\Gamma_\varphi$ for all sufficiently large $n$.

Moreover, a sequence of edges of $\Gamma_\varphi$ is said to be a
\emph{sequence of transition edges for $u$} in $\Gamma_\varphi$ if it
is the sequence of all transition edges of $u_n$ for all sufficiently
large $n$, where $(u_n)_n$ is a sequence of elements of $A^+$ such
that $u_n\to u$.
\end{Def}

Since $\varphi^\krho(u_n)\to\varphi^\krho(u)$,
the property of being a transition edge  (or of being a sequence of transition edges) for $u$ in $\Gamma_\varphi$ does not depend on the choice of the sequence $u_n$, it only depends on $\varphi$ and $u$.

For proving Theorem~\ref{t:characterization-StrongKR}, we need the
following property, contained in~\cite[Lemma 8.1]{Almeida&ACosta:2017}.

\begin{Lemma}
  \label{l:factorization-around-a-transition-edge}
  Let $\varphi$ be a continuous homomorphism from $\Om AS$ onto a
  finite semigroup, where $A$ is a finite alphabet. Let $u\in\Om AS$.
  If $((s_1,t_1),a,(s_2,t_2))$ is a transition edge for $u$
  in~$\Gamma_\varphi$, then there is a factorization $u=u_1au_2$
  of~$u$, with $u_1,u_2\in (\Om AS)^I$, such that $\varphi(u_1)=s_1$
  and $\varphi(u_2)=t_2$.
 \end{Lemma}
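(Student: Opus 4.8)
The plan is to approximate $u$ by finite words, locate the given edge as a single occurrence of the letter $a$ inside each approximation, and then pass to the limit by compactness.

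First I would fix a sequence $(u_n)_n$ of words in $A^+$ with $u_n\to u$. By the definition of a transition edge for $u$, together with the observation (recorded immediately after that definition) that this notion is independent of the chosen approximating sequence, the edge $\tau=((s_1,t_1),a,(s_2,t_2))$ belongs to $T(p_{u_n})$ for all sufficiently large $n$; in particular it occurs in the path $p_{u_n}$. Since $\tau$ is a transition edge, there is no directed path in $\Gamma_\varphi$ from $(s_2,t_2)$ to $(s_1,t_1)$; hence $\tau$ cannot occur twice in a single path (two occurrences would produce such a return path as the intervening subpath), so it occurs exactly once in $p_{u_n}$. Reading off that occurrence, I would write $u_n=w_{1,n}\,a\,w_{2,n}$ with $w_{1,n},w_{2,n}\in A^*$. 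By the description of the vertices of $p_{u_n}$, the source of this edge is $(\varphi(w_{1,n}),\varphi(aw_{2,n}))$ and its target is $(\varphi(w_{1,n}a),\varphi(w_{2,n}))$, so $\varphi(w_{1,n})=s_1$ and $\varphi(w_{2,n})=t_2$, with the convention that the empty word is sent to the adjoined identity $I$. Note that $w_{1,n}$ is empty precisely when $s_1=I$ and $w_{2,n}$ is empty precisely when $t_2=I$, uniformly in $n$, since $\varphi$ sends nonempty words into the finite semigroup, which does not contain $I$.

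Next I would invoke compactness, working throughout in the compact monoid $(\Om AS)^I$, in which $\{I\}$ is clopen and to which $\varphi$ extends continuously by $\varphi(I)=I$. Passing to a subsequence, I may assume $w_{1,n}\to u_1$ and $w_{2,n}\to u_2$ for some $u_1,u_2\in(\Om AS)^I$. Continuity of multiplication gives $u_n=w_{1,n}\,a\,w_{2,n}\to u_1au_2$, while $u_n\to u$ by the choice of the sequence; as $\Om AS$ is Hausdorff, we obtain $u=u_1au_2$. Continuity of the extended $\varphi$ then yields $\varphi(u_1)=\lim_n\varphi(w_{1,n})=s_1$ and, symmetrically, $\varphi(u_2)=t_2$, which is the desired factorization.

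I do not expect a genuine obstacle here. The only delicacy is the constant distinction between empty and nonempty approximating factors, and it is dispatched cleanly by carrying out the whole argument inside $(\Om AS)^I$ with $\varphi$ extended by $\varphi(I)=I$: then the equalities $\varphi(w_{1,n})=s_1$ and $\varphi(w_{2,n})=t_2$ hold for all large $n$ at once, even when $s_1$ or $t_2$ equals $I$. The conceptual heart of the proof is simply that a transition edge, being non-returnable, marks a single canonical position in each approximating path, which is exactly what makes the two families of factors converge to the sought-after $u_1$ and $u_2$.
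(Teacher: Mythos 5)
Your proof is correct. Note that the paper itself does not prove this lemma: it is imported by citation from \cite[Lemma~8.1]{Almeida&ACosta:2017}, so there is no in-paper argument to compare against. Your route --- approximate $u$ by words $u_n$, use the independence of the notion of transition edge from the approximating sequence to locate an occurrence of the edge in each $p_{u_n}$, read off the factorization $u_n=w_{1,n}\,a\,w_{2,n}$ with $\varphi(w_{1,n})=s_1$ and $\varphi(w_{2,n})=t_2$, and pass to a convergent subsequence in the compact monoid $(\Om AS)^I$ --- is exactly the standard argument one would expect for this statement, and every step checks out: the identification of the source and target vertices of the occurrence matches the definition of $p_{u_n}$, the clopenness of $\{I\}$ handles the empty-factor cases uniformly, and continuity of multiplication and of the extended $\varphi$ gives $u=u_1au_2$ with the required images. (The uniqueness of the occurrence, while true and correctly justified by the non-returnability of a transition edge, is not actually needed --- any single occurrence suffices --- but including it does no harm.)
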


\begin{proof}[Proof of Theorem~\ref{t:characterization-StrongKR}]
  The implication \eqref{item:characterization-StrongKR-2}
  $\Rightarrow$ \eqref{item:characterization-StrongKR-3} is trivial,
  and \eqref{item:characterization-StrongKR-3} $\Rightarrow$
  \eqref{item:characterization-StrongKR-1} follows from
  Theorem~\ref{t:equidiv-kar} and
  Proposition~\ref{p:StrongKR-covers-are-finitely-cancellable}. It
  remains to
  show~\mbox{\eqref{item:characterization-StrongKR-1}$\Rightarrow$
    \eqref{item:characterization-StrongKR-2}}. Suppose that $S$ is
  letter super-cancellative with respect to the finite set~$A$. Denote
  by $\kappa$ the continuous onto homomorphism $\Om AS\to S$ extending
  the inclusion of $A$ in~$S$.
  
  Let $\varphi:S\to T$ be a continuous homomorphism onto a finite
  semigroup. Let $\Phi=\varphi\circ\kappa$. We then have the following
  diagram, where the outer square commutes.
  \begin{equation}
    \label{eq:characterization-StrongKR}
    \begin{split}
      \xymatrix@C=14mm@R=11mm{
      \Om AS\ar[r]^{\kappa}
      \ar[d]_{\Phi^\krho}
      &S\ar[d]^{\varphi}\ar@{-->}[dl]_{\varphi_\kappa}
      \\
      **[r]{T_\Phi^\krho}
      \ar[r]_(.6){\pi_\Phi}
      &T
        }
    \end{split}
  \end{equation}
  Our aim is to show that there exists a continuous homomorphism
  $\varphi_\kappa$ such that the whole diagram commutes. For that
  purpose, take $u,v\in\Om AS$ such that $\kappa(u)=\kappa(v)$. We
  claim that $\Phi^\krho(u)=\Phi^\krho(v)$. Let $(\varepsilon_i)_{i\in
    \{1,\ldots,n\}}$ and $(\delta_i)_{i\in \{1,\ldots,m\}}$ be the
  sequences of transition edges in $\Gamma_{\Phi}$ respectively for
  $u$ and for $v$. Without loss of generality, we may assume
  that~$n\leq m$. For an edge $\varepsilon$ of the graph
  $\Gamma_\Phi$, $\alpha(\varepsilon)$ and $\omega(\varepsilon)$
  denote the beginning and end vertices of~$\varepsilon$,
  respectively.

    Suppose that the set
  \begin{equation}\label{eq:characterization-0}
       \{i\in \{1,\ldots,n\}\mid \varepsilon_i\neq \delta_i\}
  \end{equation}
  is nonempty, and let
  $j$ be its minimum.
  By Lemma~\ref{l:factorization-around-a-transition-edge},
  there are factorizations
  \begin{equation*}
  u=u_1au_2\quad\text{ and }\quad v=v_1bv_2
  \end{equation*}
  with $a,b\in A$ and $u_1,u_2,v_1,v_2\in (\Om AS)^I$,
  such that
  \begin{equation*}
  \varepsilon_j=((\Phi(u_1),\Phi(au_2)),a,(\Phi(u_1a),\Phi(u_2))    
  \end{equation*}
  and
  \begin{equation*}
  \delta_j=((\Phi(v_1),\Phi(bv_2)),b,(\Phi(v_1b),\Phi(v_2)).    
\end{equation*}
Note that $\alpha(\varepsilon_j)$ and $\alpha(\delta_j)$
belong to the same strongly connected component of $\Gamma_{\Phi}$, by the minimality of the index $j$.

If $u_1=I$, then $j=1$, which in turn implies that
$v_1=I$.
We then have $\kappa(au_2)=\kappa(u)=\kappa(v)=\kappa(bv_2)$.
Because $S$ is finitely cancellable with respect to $A$,
we deduce that $a=b$ and $\kappa(u_2)=\kappa(v_2)$, and
so we get that $\varepsilon_j=\delta_j$, a contradiction.
Hence we have $u_1\neq I$, and, analogously, $v_1\neq I$.

Similarly, if $u_2=v_2=I$, then $j=n=m$ and $\varepsilon_j=\delta_j$, a contradiction.

Suppose that $u_2=I$ and $v_2\neq I$.
We then have $\kappa(u_1a)=\kappa(v_1bv_2)$.
Since~$S$ is letter super-cancellative with respect to $A$,
it follows that there is a pseudoword $v_2'\in(\Om AS)^I$
such that $v_2=v_2'a$
and $\kappa(u_1)=\kappa(v_1bv_2')$.
This implies the existence of a path 
in $\Gamma_\Phi$
from 
$\omega(\delta_j)=(\Phi(v_1b),\Phi(v_2))$
to
$\alpha(\varepsilon_j)=(\Phi(v_1bv_2'),\Phi(a))$
labeled by a word $v_2''$ of $A^*$
such that $\Phi(v_2'')=\Phi(v_2')$.
As $\alpha(\varepsilon_j)$ and
  $\alpha(\delta_j)$ belong to the same strongly connected component
  of $\Gamma_\Phi$, we deduce that there is in $\Gamma_\Phi$
  a path from $\omega(\delta_j)$
  to $\alpha(\delta_j)$,
  contradicting the fact that $\delta_j$
  is a transition edge of $\Gamma_\Phi$.
  Therefore, we must have $u_2\neq I$.

Since $S$ is equidivisible,
and we have $\kappa(u_1a\cdot u_2)=\kappa(v_1\cdot bv_2)$
with none of the elements
$\kappa(u_1a)$, $\kappa(u_2)$, $\kappa(v_1)$, $\kappa(bv_2)$ being equal to
$I$, we know that  there is $t\in (\Om AS)^I$ such that
  \begin{equation}\label{eq:characterization-1}
    \kappa(u_1at)=\kappa(v_1)\quad \text{and}\quad
    \kappa(u_2)=\kappa(tbv_2),
  \end{equation}
  or
  \begin{equation}\label{eq:characterization-2}
    \kappa(v_1t)=\kappa(u_1a)\quad \text{and}\quad \kappa(bv_2)=\kappa(tu_2).
  \end{equation}

  If Case~\eqref{eq:characterization-1} holds, then there is
  in $\Gamma_\Phi$ a (possibly empty) path from
  $\omega(\varepsilon_j)$
  to
  $\alpha(\delta_j)$, labeled by a word
  $t_0\in A^\ast$ such that $\Phi(t_0)=\Phi(t)$.
  But,
  since
  $\alpha(\varepsilon_j)$
  and
  $\alpha(\delta_j)$
  are in the same strongly connected component,
  we reach a contradiction with the hypothesis
that $\varepsilon_j$ is a transition edge of $\Gamma_\Phi$.

    Therefore, Case~\eqref{eq:characterization-2} holds
  with $t\neq I$.
  Since $S$ is letter super-cancellative with respect to~$A$, there is $t'\in(\Om AS)^I$ with $t=t'a$
  and
  \begin{equation}\label{eq:characterization-3}
    \kappa(v_1t')=\kappa(u_1)\quad \text{and}\quad \kappa(bv_2)=\kappa(t'au_2).
  \end{equation}
  
  Suppose that $t'\neq I$. Again
  because $S$ is letter super-cancellative with respect
  to~$A$, it follows from~\eqref{eq:characterization-3}
  that 
  there is $t''\in(\Om AS)^I$ with $t'=bt''$
  and
  \begin{equation}\label{eq:characterization-4}
    \kappa(v_1b\cdot t'')=\kappa(u_1)
    \quad
    \text{and}
    \quad
    \kappa (v_2)=\kappa(t''\cdot au_2).
  \end{equation}
  This implies the existence of a path in $\Gamma_{\Phi}$
  from $\omega(\delta_j)$ to $\alpha(\varepsilon_j)$,
  which once more contradicts the definition of a transition edge.

  Therefore, we have $t'=I$, and so,
  once again because
  $S$ is letter super-cancellative with respect
  to $A$, from~\eqref{eq:characterization-3}
  we get $\kappa(v_1)=\kappa(u_1)$, $a=b$ and $\kappa(v_2)=\kappa(u_2)$.
  This yields $\varepsilon_j=\delta_j$, which contradicts the initial
  assumption.
  Therefore, the set~\eqref{eq:characterization-0} is empty.
  In particular, $\varepsilon_n=\delta_n$ holds.
  Since $\varepsilon_n$ is the last transition edge for $u$,
  we have $\omega(\delta_n)=(\Phi(u),I)$, which means that
  $\delta_n$ is the last transition edge for $v$,
  whence $m=n$ and $\varepsilon_i=\delta_i$ for every $i\in\{1,\ldots,n\}$.

  We have therefore established the claim that $\Phi^\krho(u)=\Phi^\krho(v)$
  holds whenever $\kappa(u)=\kappa(v)$,
  and so there is a unique continuous
  homomorphism $\varphi_\kappa:S\to T_\Phi^\krho$
  such that Diagram~\ref{eq:characterization-StrongKR}
  commutes, thus showing that $S$ is a strong KR-cover.
\end{proof}

\subsection{A strong KR-cover which is not relatively free}

Let $S$ be a finite semigroup and let $\varphi:A\to S$ be a generating mapping,
where $A$ is a finite alphabet. We define the onto homomorphism
$\varphi^{\krho^n}:A^+\to S_\varphi^{\krho^n}$ recursively by
\begin{equation*}
  \varphi^{\krho^0}=\varphi
  \quad
  \text{ and }
  \quad
  \varphi^{\krho^{n+1}}=(\varphi^{\krho^n})^{\krho}\quad (n\geq 0).
\end{equation*}
For $m\geq n$, let $\varrho_{m,n}$ be the unique (onto)
homomorphism $S_\varphi^{\krho^m}\to S_\varphi^{\krho^n}$
such that the  following diagram commutes:
\begin{equation*}
  \xymatrix@C=2mm{
    & A^+ \ar[dr]^{\varphi^{\krho^m}} \ar[dl]_{\varphi^{\krho^n}}&\\
    {S_\varphi^{\krho^n}}
    && {S_\varphi^{\krho^m} .} \ar[ll]^{\varrho_{m,n}}
  }
\end{equation*}
The family of homomorphisms $\{\varrho_{m,n}\mid m,n\in\mathbb
N,m\geq n\}$ defines an inverse system of $A$-generated semigroups.
Consider its inverse limit, the profinite semigroup
$S_\varphi^{\krho^\omega}=\varprojlim S_\varphi^{\krho^n}$, with
generating mapping $\varphi^{\krho^\omega}:A\to
S_\varphi^{\krho^\omega}$. The associated projection
$S_\varphi^{\krho^\omega}\to S_\varphi^{\krho^n}$, denoted
$\varrho_{n}$, is an onto continuous homomorphism, since the
connecting homomorphisms defining the inverse limit are onto (see,
for instance, \cite[Lemma~3.1.26]{Rhodes&Steinberg:2009qt}).

Using results that can be found in~\cite{Rhodes&Steinberg:2009qt}
(namely, Corollary~5.3.22 and Theorem~3.6.4), one may show that for an
arbitrary pseudovariety \pv V, the following equality holds, where
two-sided Karnofsky--Rhodes expansion needs to be extended to
profinite semigroups (as in \cite{Rhodes&Steinberg:2001}) when \pv V
is not locally finite:
\begin{displaymath}
  (\Om AV)^{\krho^\omega} = \Om A{(LI\malcev V)}.
\end{displaymath}

\begin{Prop}\label{p:infinite-KRexpansion-is-equidivisible}
  Let $S$ be a finite semigroup generated by $\varphi:A\to S$, where
  $A$ is a finite alphabet. The profinite semigroup
  $S_\varphi^{\krho^\omega}$ is a strong KR-cover with respect to the
  generating mapping $\varphi^{\krho^\omega}:A\to
  S_\varphi^{\krho^\omega}$.
\end{Prop}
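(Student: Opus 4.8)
The plan is to verify the defining property of a strong KR-cover directly, the engine being the functoriality of the two-sided Karnofsky--Rhodes expansion recorded in Proposition~\ref{p:mario-branco-proposition}. Write $P=S_\varphi^{\krho^\omega}$ and $\kappa=\varphi^{\krho^\omega}$. Fix a finite continuous homomorphic image $T$ of $P$ together with a continuous onto homomorphism $\psi\colon P\to T$, and set $\Phi=\psi\circ\kappa\colon A^+\to T$. Since $\kappa(A)$ generates a dense subsemigroup of $P$ and $T$ is finite (hence discrete), $\Phi$ is onto, so $T_\Phi^\krho$, $\Phi^\krho$ and $\pi_\Phi$ are all defined. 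The goal is to produce a continuous homomorphism $\psi_\kappa\colon P\to T_\Phi^\krho$ with $\psi_\kappa\circ\kappa=\Phi^\krho$ on $A$ and $\pi_\Phi\circ\psi_\kappa=\psi$, which is exactly the commutativity of Diagram~\eqref{eq:definition-StrongKR-cover}.

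First I would factor $\psi$ through a finite stage of the inverse limit. As $T$ is finite and $\psi$ is continuous and onto, there are $n\in\mathbb N$ and an onto homomorphism $\psi_n\colon S_\varphi^{\krho^n}\to T$ with $\psi=\psi_n\circ\varrho_n$ (see, for instance, \cite[Lemma~3.1.37]{Rhodes&Steinberg:2009qt}). Composing with $\kappa$ and using $\varrho_n\circ\kappa=\varphi^{\krho^n}$ on $A$, this gives the equality $\Phi=\psi_n\circ\varphi^{\krho^n}$ of homomorphisms $A^+\to T$.

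The key step is to apply Proposition~\ref{p:mario-branco-proposition} to the homomorphisms $\varphi^{\krho^n}\colon A^+\to S_\varphi^{\krho^n}$ and $\Phi\colon A^+\to T$, with $\lambda=\psi_n$ and $\alpha=\mathrm{Id}_{A^+}$; the required hypothesis $\lambda\circ\varphi^{\krho^n}=\Phi=\Phi\circ\alpha$ is the equality just obtained. Since $S_\varphi^{\krho^{n+1}}=(S_\varphi^{\krho^n})^\krho_{\varphi^{\krho^n}}$ and $(\varphi^{\krho^n})^\krho=\varphi^{\krho^{n+1}}$ by the recursive definitions, this furnishes a homomorphism $\Lambda\colon S_\varphi^{\krho^{n+1}}\to T_\Phi^\krho$ with
\begin{equation*}
  \Lambda\circ\varphi^{\krho^{n+1}}=\Phi^\krho
  \qquad\text{and}\qquad
  \pi_\Phi\circ\Lambda=\psi_n\circ\varrho_{n+1,n},
\end{equation*}
where I have used that the expansion projection $S_\varphi^{\krho^{n+1}}\to S_\varphi^{\krho^n}$ coincides with the connecting homomorphism $\varrho_{n+1,n}$ (both satisfy the same defining relation after precomposition with the onto map $\varphi^{\krho^{n+1}}$). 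I would then set $\psi_\kappa=\Lambda\circ\varrho_{n+1}$, which is continuous as a composite of continuous homomorphisms. Restricting to $A$ and using $\varrho_{n+1}\circ\kappa=\varphi^{\krho^{n+1}}$ yields $\psi_\kappa\circ\kappa=\Lambda\circ\varphi^{\krho^{n+1}}=\Phi^\krho$ on $A$, while $\pi_\Phi\circ\psi_\kappa=\psi_n\circ\varrho_{n+1,n}\circ\varrho_{n+1}=\psi_n\circ\varrho_n=\psi$ by compatibility of the projections. As $T$ and $\psi$ were arbitrary, this establishes that $P$ is a strong KR-cover with respect to $\kappa=\varphi^{\krho^\omega}$.

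There is no analytic obstacle here: the argument is entirely formal, and continuity is automatic once one has passed the projections, since the interesting maps live between finite semigroups. The one point requiring genuine thought — and the conceptual heart of the proof — is the recognition that one should climb one level in the tower, from stage $n$ to stage $n+1$, and feed the \emph{identity} on $A^+$ into Proposition~\ref{p:mario-branco-proposition}; this is precisely what converts the factorizing map $\psi_n\colon S_\varphi^{\krho^n}\to T$ into a map $S_\varphi^{\krho^{n+1}}\to T_\Phi^\krho$ of the associated expansions. The supporting identifications $S_\varphi^{\krho^{n+1}}=(S_\varphi^{\krho^n})^\krho_{\varphi^{\krho^n}}$ and $\varrho_{n+1,n}=\pi_{\varphi^{\krho^n}}$ are immediate from the recursive definition of the tower and the uniqueness clause defining the connecting homomorphisms.
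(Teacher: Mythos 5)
Your proposal is correct and follows essentially the same route as the paper: factor $\psi$ through a finite stage $\psi_n\circ\varrho_n$, apply Proposition~\ref{p:mario-branco-proposition} with $\alpha=\mathrm{Id}_{A^+}$ and $\lambda=\psi_n$ to obtain $\Lambda:S_\varphi^{\krho^{n+1}}\to T_{\psi\circ\kappa}^\krho$, and set $\psi_\kappa=\Lambda\circ\varrho_{n+1}$. The identification $\varrho_{n+1,n}=\pi_{\varphi^{\krho^n}}$, which you justify via the uniqueness after precomposition with the onto map $\varphi^{\krho^{n+1}}$, is exactly the point the paper uses implicitly.
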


\begin{proof}
  Let $\kappa=\varphi^{\krho^\omega}$ and $\psi$ be a continuous
  homomorphism from $S_\varphi^{\krho^\omega}$ onto a finite semigroup
  $T$. Choose an integer $n\geq 1$ for which $\psi$ has a
  factorization $\psi=\psi_n\circ\varrho_n$ for some homomorphism
  $\psi_n:S_\varphi^{\krho^n}\to T$ (its existence being guaranteed,
  for instance, by \cite[Lemma~3.1.37]{Rhodes&Steinberg:2009qt}).
  Observe that the non-dashed part of
  Diagram~\eqref{eq:infinite-KRexpansion-is-equidivisible-1} is
  commutative.
    \begin{equation}\label{eq:infinite-KRexpansion-is-equidivisible-1}
   \begin{split}
    \xymatrix@R=2mm@C=12mm{
      S_\varphi^{\krho^n}
      \ar[dddd]_{\psi_n}
      &&&
      S_\varphi^{\krho^{n+1}}
      \ar[lll]_{\varrho_{n+1,n}}
      \ar@{-->}[dddd]^{(\psi_n)^\krho}
      \\
    &&&\\
    &S_\varphi^{\krho^\omega}
    \ar[ldd]_\psi\ar[luu]^{\varrho_n}
    \ar[rruu]^(0.4){\varrho_{n+1}}
    &
    A^+\ar[l]^\kappa
    \ar[ruu]_{\varphi^{\krho^{n+1}}}
    \ar[rdd]^(.35){(\psi\circ\kappa)^{\krho}}
    &&\\
    &&&\\
    T&&&T_{\psi\circ\kappa}^\krho\ar[lll]_{\pi_{\psi\circ\kappa}}
  }
  \end{split}
  \end{equation}
  
  Applying Proposition~\ref{p:mario-branco-proposition}, we see that
  there is a unique homomorphism
  $(\psi_n)^\krho:S_\varphi^{\krho^{n+1}}\to
  T_{\psi\circ\kappa}^\krho$ for which
  Diagram~\eqref{eq:application-of-mario-branco-proposition-1}
  commutes.
  Since the topmost triangle in
  Diagram~\eqref{eq:application-of-mario-branco-proposition-1} is
  the rightmost triangle in
  Diagram~\eqref{eq:infinite-KRexpansion-is-equidivisible-1}, we
  deduce that the latter is commutative.
    \begin{equation}\label{eq:application-of-mario-branco-proposition-1}
  \begin{split}  
    \xymatrix@C=15mm@R=10mm{
      **[r]{S_\varphi^{\krho^{n+1}}}\ar[dd]_{\varrho_{n+1,n}}\ar@{-->}[rr]^{(\psi_n)^\krho}
      &
      &**[r]{T_{\psi\circ\kappa}^\krho}\ar[dd]^{\pi_{\psi\circ\kappa}}
      \\
      &A^+\ar[dl]_{\varphi^{\krho^n}}\ar[ul]_{\varphi^{\krho^{n+1}}}
      \ar[dr]^{\psi\circ\kappa}\ar[ur]^{(\psi\circ\kappa)^\krho}
      &
      \\
      **[r]{S_\varphi^{\krho^{n}}}\ar[rr]^{\psi_n}
      &
      &T
    }
  \end{split}    
  \end{equation}

  Note that $(\psi_n)^\krho$ is onto, as $(\psi\circ\kappa)^\krho$ is onto.
  Let $\psi_\kappa$
  be the onto continuous homomorphism 
  $(\psi_n)^\krho\circ\varrho_{n+1}$.
  The commutativity of Diagram~\eqref{eq:infinite-KRexpansion-is-equidivisible-1} entails in particular that
  \begin{equation*}
    (\psi\circ\kappa)^\krho=(\psi_n)^\krho\circ\varrho_{n+1}\circ\kappa
    =\psi_\kappa\circ\kappa
  \end{equation*}
  and so Diagram~\eqref{eq:infinite-KRexpansion-is-equidivisible-3}
  commutes.
\begin{equation}\label{eq:infinite-KRexpansion-is-equidivisible-3}
\begin{split}
  \xymatrix@C=12mm{
    A^+\ar[d]_{(\psi\circ\kappa)^\krho}\ar[r]^\kappa&**[r]{S_\varphi^{\krho^\omega}}
    \ar@{-->}[ld]_{\psi_\kappa}
    \ar[d]^{\psi}\\
    **[r]{T_{\psi\circ\kappa}^\krho}\ar[r]^(0.6){\pi_{\psi\circ\kappa}}&T
  }
\end{split}
\end{equation}
This establishes that $S_\varphi^{\krho^\omega}$
is a strong KR-cover.  
\end{proof}

\begin{Examp}
  \label{eg:Sl}
  Let $S=\{a,b\}$ be the 2-element semilattice, where $b$ is the
  minimum element. For the alphabet $A=\{a,b\}$, the description of
  $S$ determines an onto homomorphism $\varphi:A^+\to S$. By
  Proposition~\ref{p:infinite-KRexpansion-is-equidivisible}, the
  profinite semigroup $T=S_\varphi^{\krho^\omega}$ is a strong
  KR-cover with respect to the natural generating mapping
  $\varphi^{\krho^\omega}:\{a,b\}\to T$. Since
  $\varphi^{\krho^\omega}(a)\ne\varphi^{\krho^\omega}(b)$, we see
  $\{a,b\}$ as a subset of $T$.
\end{Examp}

In \cite{Almeida&ACosta&Costa&Zeitoun:2019}, some properties of the
equidivisible profinite semigroups which are letter
supper-cancellative were studied. But the question of whether such
semigroups may not be relatively free was left open (cf.~\cite[Section
9]{Almeida&ACosta&Costa&Zeitoun:2019}). Example~\ref{eg:Sl} provides
an answer to that question, as shown next.

\begin{Prop}
    The semigroup $T$
    from Example~\eqref{eg:Sl} is an equidivisible profinite semigroup
    which is letter super-cancellative but not
    a relatively free profinite semigroup.
  \end{Prop}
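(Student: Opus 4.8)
The first two properties are immediate from what has already been proved. By Proposition~\ref{p:infinite-KRexpansion-is-equidivisible}, $T=S_\varphi^{\krho^\omega}$ is a strong KR-cover with respect to $\kappa=\varphi^{\krho^\omega}$; in particular it is a strong KR-cover of the trivial semigroup, so $T$ is letter super-cancellative with respect to $\kappa(A)=\{a,b\}$ by Proposition~\ref{p:StrongKR-covers-are-finitely-cancellable}, and it is equidivisible by Theorem~\ref{t:equidiv-kar} together with Remark~\ref{rmk:StrongKR-vs-KR}. Being an inverse limit of the finite semigroups $S_\varphi^{\krho^n}$, it is profinite. Thus the entire content is to show that $T$ is \emph{not} relatively free.

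My plan is to separate $T$ from every relatively free profinite semigroup by means of its maximal semilattice quotient $T^{\pv{Sl}}$, an isomorphism invariant. Suppose, for contradiction, that $T\cong\Om XW$ for some pseudovariety $\pv W$ and some set~$X$. Since $T$ is topologically generated by the finite set $\{a,b\}$, so is $\Om XW$; and since $S$ is a finite continuous homomorphic image of $T$, we have $S\in\pv W$, whence $\pv{Sl}\subseteq\pv W$ because the two-element semilattice $S$ generates $\pv{Sl}$. Consequently $T^{\pv{Sl}}\cong(\Om XW)^{\pv{Sl}}=\Om X{Sl}$ is the free semilattice on~$X$; in particular $X$ must be finite, and then $\Om X{Sl}$ has exactly $2^{|X|}-1$ elements. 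It therefore suffices to prove that $T^{\pv{Sl}}$ has exactly two elements (namely that $T^{\pv{Sl}}\cong S$), since $2^{|X|}-1=2$ is impossible.

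To pin down $T^{\pv{Sl}}$ I would first establish the absorption identity $b^\omega a\,b^\omega=b^\omega$ in $T$, proving it in each $S_\varphi^{\krho^n}$ by induction on~$n$. For $n=0$ it reads $bab=b$ in $S$, which holds because $b$ is the minimum (the zero) of the semilattice $S$. For the inductive step, write $e=b^\omega$ for the relevant idempotent of $S_\varphi^{\krho^{n+1}}$ and recall from Remark~\ref{r:properties-of-the-projection} that the connecting morphism $\varrho_{n+1,n}=\pi_{\varphi^{\krho^n}}$ is an $\pv{LI}$-morphism. The induction hypothesis yields $\varrho_{n+1,n}(eae)=b^\omega a\,b^\omega=b^\omega=\varrho_{n+1,n}(e)$, so both $e$ and $eae$ lie in the subsemigroup $N=\varrho_{n+1,n}^{-1}(b^\omega)$, which belongs to $\pv{LI}$. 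Applying the defining pseudoidentity $x^\omega y\,x^\omega=x^\omega$ of $\pv{LI}$ with $x=e$ and $y=eae\in N$ gives $e(eae)e=e$; as $e$ is idempotent the left-hand side collapses to $eae$, so $eae=e$. Since $b^\omega a\,b^\omega$ and $b^\omega$ have equal images under every $\varrho_n$, they are equal in $T$.

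Finally, for any continuous homomorphism $g$ from $T$ onto a finite semilattice $L$, the identity $b^\omega a\,b^\omega=b^\omega$ gives $g(a)g(b)=g(b)$ in $L$ (using that $g(b)$ is idempotent and $L$ commutative), that is $g(ab)=g(b)$. Hence $\overline{ab}=\bar b$ in $T^{\pv{Sl}}$, which is a $2$-generated pro-$\pv{Sl}$ semigroup, hence a quotient of $\Om A{Sl}$; the relation $\overline{ab}=\bar b$ leaves only the two elements $\bar a,\bar b$, and since $T^{\pv{Sl}}$ maps onto $S$ it is isomorphic to $S$, completing the argument. The main obstacle is exactly this computation of $T^{\pv{Sl}}$: everything rests on the absorption $b^\omega a\,b^\omega=b^\omega$, which encodes the genuine asymmetry between the generators — $a$ being sent to the identity and $b$ to the zero of $S$ — that a relatively free profinite semigroup, whose generators are interchangeable by automorphisms, cannot display.
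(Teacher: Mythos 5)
Your proof is correct, and while it follows the paper's overall strategy of passing to a semilattice quotient, both of its key steps are carried out differently. For the absorption property, the paper proves the stronger statement that $[b^\omega w b^\omega]_n=[b^\omega]_n$ for all $w\in A^*$ (so that $\varphi^{\krho^\omega}(b^\omega)$ lies in the minimum ideal of~$T$) by an explicit analysis of the coterminal paths $p_{b^{2k}}$ and $p_{b^{2k}wb^{2k}}$ in the two-sided Cayley graph $\Gamma_{\varphi^{\krho^n}}$: the extra factor only contributes a circuit, hence no new transition edges. You instead prove only the single identity $b^\omega ab^\omega=b^\omega$, deducing the inductive step algebraically from the fact that $\varrho_{n+1,n}=\pi_{\varphi^{\krho^n}}$ is an \pv{LI}-morphism (Remark~\ref{r:properties-of-the-projection}) together with the pseudoidentity $x^\omega yx^\omega=x^\omega$ defining \pv{LI} applied inside the subsemigroup $\varrho_{n+1,n}^{-1}([b^\omega]_n)$; this is a clean and perfectly valid shortcut, and the weaker identity suffices because it already forces $\bar a\bar b=\bar b$ in every finite semilattice image. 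For the endgame, the paper works inside $\Om X{Sl}$ and observes that the complement of its minimum ideal would have to be contained in the singleton $\{\pi(a)\}$ while containing the at least two generators; you instead compute the full maximal pro-\pv{Sl} quotient $T^{\pv{Sl}}\cong S$ and note that no free (pro-)semilattice $\Om X{Sl}$ has exactly two elements, since it has $2^{|X|}-1$ elements for finite $X$ and is infinite otherwise. Both contradictions are sound; yours has the virtue of isolating a concrete isomorphism invariant ($|T^{\pv{Sl}}|=2$), while the paper's records the stronger structural fact that $b^\omega$ lies in the minimum ideal of~$T$ itself.
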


  \begin{proof}
    We already observed that $T$ is a strong KR-cover. It then follows
    from Theorem~\ref{t:characterization-StrongKR} that $T$ is
    equidivisible and letter super-cancellative. We proceed to show
    that $T$ is not a relatively free profinite semigroup.

    We claim that the idempotent $\varphi^{\krho^\omega}(b^\omega)\in
    T$ belongs to the minimum ideal of~$T$. To avoid overloaded
    notation, denote $\varphi^{\krho^n}(v)$ by $[v]_n$. We establish
    the claim by proving that the idempotent $[b^\omega]_n$ belongs to
    the minimum ideal of~$T_n=S_\varphi^{\krho^n}$, for every $n\geq
    0$. We do this by showing inductively on~$n$ that %
  \begin{equation}
    \label{eq:eg-Sl}
    \text{$[b^\omega wb^\omega]_n=[b^\omega]_n$ %
      \,
    for every $w\in A^*$.}
\end{equation}
The base case $n=0$ is immediate, as $\varphi^{\krho^0}=\varphi$.
Assume that \eqref{eq:eg-Sl} holds for a certain value of $n\geq 0$. 
  Let $k$ be an integer such that
  $[b^\omega]_{n+1}=[b^{k}]_{n+1}$.
  Note that, since $\varphi^{\krho^{n}}=\varrho_{n+1,n}\circ\varphi^{\krho^{n+1}}$, we also have
  $[b^\omega]_{n}=[b^{k}]_{n}$.
  In the graph $\Gamma=\Gamma_{\varphi^{\krho^n}}$, the path $p_{b^{2k}}$ is
  the concatenation of the path $q$ from $(I,[b^k]_n)$ to
  $([b^k]_n,[b^k]_n)$ labeled by $b^k$, and the path $q'$
  from the latter vertex to
  $([b^k]_n,I)$, which is also labeled by $b^k$.
  On the other hand, by the induction hypothesis, we
  know that, for every $w\in A^*$, the path
  $p_{b^{2k} w b^{2k}}$ of $\Gamma$ decomposes as $qrq'$
  where $r$ is the circuit at the vertex
  $([b^{k}]_n,[b^k]_n)$ labeled by $b^kwb^k$ (see Figure~\ref{fig:paths-in-the-example}).
  In particular, the two coterminal paths $p_{b^{2k}}$ and
  $p_{b^{2k} wb^{2k}}$ use the same transition edges of
  the graph $\Gamma$, and so the equality $[b^{2k}]_{n+1}=[b^{2k}wb^{2k}]_{n+1}$ holds. That is, we have \eqref{eq:eg-Sl} for
  $n+1$ in the place of~$n$. This concludes the inductive step of the proof,
  and shows that $b^\omega$ belongs to the minimum ideal of $T$.

\tikzset{every loop/.style={min distance=10mm,in=240,out=-60,looseness=5}}  

\begin{figure}[h]
  \centering
       \begin{tikzpicture}[shorten >=1pt, node distance=2.5cm and 4cm, on grid,initial text=,semithick,line width=0.5pt]
 \tikzstyle{state}=[rectangle,draw,minimum size=17pt,inner sep=3pt]
  \tikzset{every loop/.style={min distance=10mm,out=120,in=50,looseness=5}}  
 
  \node[state]   (1)   {$(I,[b^k]_{n})$};
  \node[state]   (2) [right=of 1]  {$([b^k]_{n},[b^k]_{n})$};
  \node[state]   (3) [right=of 2]  {$([b^k]_{n},I)$};
  \path[->]  (1)   edge  [bend left=20] node [above] {$b^k$} (2)
             (2)   edge  [bend left=20] node [above] {$b^k$} (3)
             (2)   edge [loop above]   node         {$b^kwb^k$} ();
\end{tikzpicture}
  
\caption{The path $p_{b^{2k} w b^{2k}}$
  of the graph $\Gamma_{\varphi^{\krho^n}}$.}
  \label{fig:paths-in-the-example}
\end{figure}
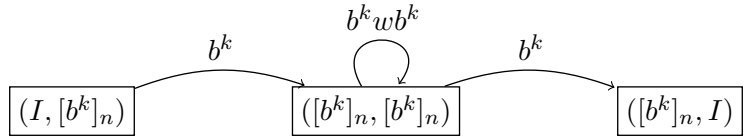

Arguing by contradiction, suppose that $T$ is a relatively free
profinite semigroup $\Om XV$, for some semigroup pseudovariety $\pv V$
and alphabet~$X$. As the semilattice $S=\{a,b\}$ is a continuous
homomorphic image of $T$, the set~$X$ has at least two elements and
$\pv V$ contains the pseudovariety $\pv {Sl}$ (of all finite
semilattices). Let $\pi$ be the unique continuous homomorphism $\Om
XV\to \Om X{Sl}$ mapping each generator to itself. Then
$\pi(b)=\pi(b^\omega)$ belongs to the minimum ideal $K$ of $\Om X{Sl}$
by the claim, and since $\{a,b\}$ generates $T=\Om XV$, we conclude
that $\Om X{Sl}\setminus K\subseteq \pi(a^*)=\{\pi(a)\}$. But since
$X$ has at least two elements, which belong to the set $\Om
X{Sl}\setminus K$,
  we reach a contradiction.
  To avoid the contradiction, the only possibility is that $T$ is not a relatively free profinite semigroup.
  \end{proof}

\section{Profinite coproducts of letter super-cancellative equidivisible profinite semigroups}

The proof of the following proposition is an adaptation
of the proof of Theorem~\ref{t:coproduct-KR-covers}.

\begin{Prop}
  \label{p:coproduct-StrongKR-trivial-covers}
  If \pv V is a semigroup pseudovariety containing \pv{LI},
  then the class of all pro-$\pv V$
  strong KR-covers of the trivial
  semigroup is closed under finite \pv V-coproducts.
\end{Prop}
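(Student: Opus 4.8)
The plan is to follow the proof of Theorem~\ref{t:coproduct-KR-covers} almost verbatim, making three adaptations: the finite continuous image under consideration is fixed to be the trivial semigroup, the generating mapping of the coproduct is assembled once and for all from those of the factors (which is what forces the restriction to \emph{finite} coproducts), and the membership in~\pv V of the relevant two-sided Karnofsky--Rhodes expansion is deduced from the hypothesis $\pv{LI}\subseteq\pv V$ rather than from closure under the expansion. In fact this last point is precisely where the weaker hypothesis suffices: by Remark~\ref{r:properties-of-the-projection}, the projection from the expansion of the trivial semigroup is a \pv W-morphism for $\pv W=\op xyz=xz\cl$, so that expansion always lies in $\pv W\subseteq\pv{LI}$.

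Concretely, let $(S_i)_{i\in I}$ be a finite family of pro-\pv V strong KR-covers of the trivial semigroup, where each $S_i$ is such a cover with respect to a generating mapping $\kappa_i\colon A_i\to S_i$ with $A_i$ finite, and let $S=\coprod^\pv V_{i\in I}S_i$ with natural embeddings $\varphi_i\colon S_i\to S$. Since $I$ and each $A_i$ are finite, the disjoint union $A=\biguplus_{i\in I}A_i$ is finite, and I would define $\kappa\colon A\to S$ by $\kappa|_{A_i}=\varphi_i\circ\kappa_i$. First I would check that $\kappa$ is a generating mapping: the subsemigroup $D$ generated by $\kappa(A)=\bigcup_{i\in I}\varphi_i(\kappa_i(A_i))$ has closure containing each closed factor $\varphi_i(S_i)=\overline{\varphi_i(\langle\kappa_i(A_i)\rangle)}$, hence contains $\bigcup_i\varphi_i(S_i)$, whence $\overline D=S$. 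The finiteness of $A$, hence of $I$, is exactly the place where the finiteness of the coproduct is used.

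Next, write $\varphi\colon S\to\{1\}$ and $\varphi^{(i)}\colon S_i\to\{1\}$ for the unique continuous onto homomorphisms to the trivial semigroup, and set $\Phi=\varphi\circ\kappa$ and $\Phi_i=\varphi^{(i)}\circ\kappa_i$. Each factor being a strong KR-cover with respect to $\kappa_i$ supplies a continuous homomorphism $\varphi^{(i)}_{\kappa_i}\colon S_i\to\{1\}_{\Phi_i}^\krho$ with $\varphi^{(i)}_{\kappa_i}\circ\kappa_i=\Phi_i^\krho$. Applying Proposition~\ref{p:mario-branco-proposition} to the inclusion $\alpha_i\colon A_i^+\hookrightarrow A^+$ and to $\mathrm{Id}_{\{1\}}$, which satisfy $\mathrm{Id}_{\{1\}}\circ\Phi_i=\Phi\circ\alpha_i$, yields homomorphisms $\eta_i\colon\{1\}_{\Phi_i}^\krho\to\{1\}_\Phi^\krho$ with $\eta_i\circ\Phi_i^\krho=\Phi^\krho\circ\alpha_i$. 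As noted above, $\{1\}_\Phi^\krho=\pi_\Phi^{-1}(1)\in\pv W\subseteq\pv{LI}\subseteq\pv V$, so the universal property of the \pv V-coproduct assembles the family $(\eta_i\circ\varphi^{(i)}_{\kappa_i})_{i\in I}$ into a single continuous homomorphism $\varphi_\kappa\colon S\to\{1\}_\Phi^\krho$ with $\varphi_\kappa\circ\varphi_i=\eta_i\circ\varphi^{(i)}_{\kappa_i}$ for each $i\in I$.

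It then remains to verify that $\varphi_\kappa$ completes Diagram~\eqref{eq:definition-StrongKR-cover} with $T=\{1\}$. The relation $\pi_\Phi\circ\varphi_\kappa=\varphi$ is automatic, since both are homomorphisms into the trivial semigroup. For the remaining triangle I would evaluate on a generator $a\in A_i$ and chain the three identities already in hand:
\[
\varphi_\kappa(\kappa(a))=\eta_i\bigl(\varphi^{(i)}_{\kappa_i}(\kappa_i(a))\bigr)=\eta_i\bigl(\Phi_i^\krho(a)\bigr)=\Phi^\krho(\alpha_i(a))=\Phi^\krho(a),
\]
so $\varphi_\kappa\circ\kappa=\Phi^\krho=(\varphi\circ\kappa)^\krho$ on the generating set~$A$, exhibiting $S$ as a strong KR-cover of the trivial semigroup with respect to~$\kappa$. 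I expect the only genuinely delicate points to be the two bookkeeping observations that $\kappa$ is a generating mapping and that the disjoint union keeps the factor alphabets from interfering, together with the identification $\{1\}_\Phi^\krho\in\pv{LI}$ that lets the hypothesis $\pv{LI}\subseteq\pv V$ replace closure under Karnofsky--Rhodes expansion; everything else is a direct transcription of the argument for Theorem~\ref{t:coproduct-KR-covers}.
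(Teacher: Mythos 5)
Your proposal is correct and follows essentially the same route as the paper's proof: fix the trivial target, glue the factor alphabets into one finite generating mapping, transport each factor's lifting through Proposition~\ref{p:mario-branco-proposition} via the inclusions $A_i^+\hookrightarrow A^+$, note that the expansion of the trivial semigroup lies in $\pv W\subseteq\pv{LI}\subseteq\pv V$ by Remark~\ref{r:properties-of-the-projection}, and assemble the lift by the universal property of the coproduct, checking commutativity on generators. The only difference is cosmetic: you spell out the verification that $\kappa$ is a generating mapping, which the paper takes for granted.
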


\begin{proof}
  We start by observing that, by
  Remark~\ref{r:properties-of-the-projection}, \pv V contains all
  two-sided Karnofsky--Rhodes expansions of the trivial semigroup.
  
  Let $(S_i)_{i\in I}$ be a finite family of pro-\pv V
  strong KR-covers of the trivial semigroup $T$.
  Let $S$ be their \pv V-coproduct,
  with associated continuous
  homomorphisms $\varphi_i:S_i\to S$.
  Consider the unique mapping
  $\psi:S\to T$ and let
  $\psi_i=\psi\circ\varphi_i$.
  Since $S_i$ is a
  strong KR-cover of $T$,
  we know that there are a finite set $A_i$,
  a generating mapping $\kappa_i:A_i\to S_i$,
  and a continuous homomorphism $\beta_i$ such that
  the diagram
  \begin{equation*}
    \xymatrix@C=14mm@R=11mm{
      A_i^+\ar[r]^{\kappa_i}
      \ar[d]_(.4){(\psi_i\circ\kappa_i)^\krho}
      &S_i\ar[d]^(.4){\psi_i}\ar@{-->}[dl]_{\beta_i}
      \\
      **[r]{{T}_{\psi_i\circ\kappa_i}^\krho}
      \ar[r]_(.6){\pi_{\psi_i\circ\kappa_i}}
      &T
    }
  \end{equation*}
  commutes.
  We may assume that
  the sets $A_i$
  are pairwise disjoint.
  Let
  $A=\bigcup_{i\in I}A_i$.
  The union $\kappa=\bigcup_{i\in I}\kappa_i$
  is then a generating mapping $A\to S$
  with finite domain.
  
  Applying Proposition~\ref{p:mario-branco-proposition},
  we see that for each $i\in I$
  there is a homomorphism 
  $\eta_i:T_{\psi_i\circ\kappa_i}^\krho\to T_{\psi\circ\kappa}^\krho$
  such that Diagram~\eqref{eq:eta-i}
  commutes.
  \begin{equation}\label{eq:eta-i}
  \begin{split}  
    \xymatrix@C=14mm@R=10mm{
      **[r]{T_{\psi_i\circ\kappa_i}^\krho}\ar[dd]_{\pi_{\psi_i\circ\kappa_i}}\ar@{-->}[rrr]^{\eta_i}
      &
      &
      &**[r]{T_{\psi\circ\kappa}^\krho}\ar[dd]^{\pi_{\psi\circ\kappa}}
      \\
      &A_i^+\ar[dl]_{\psi_i\circ\kappa_i}\ar@{^{((}->}[r]\ar[ul]_(0.4){{\psi_i\circ\kappa_i}^\krho}
      &A^+\ar[dr]^{\psi\circ\kappa}\ar[ur]^(0.4){(\psi\circ\kappa)^\krho}
      &
      \\
      T\ar@{=}[rrr]^{\mathrm{Id}_{T}}
      &
      &
      &T
    }
  \end{split}    
  \end{equation}
  Therefore, the non-dashed part
  of Diagram~\eqref{eq:StrongKR-trivial-slicing-T} commutes.
  \begin{equation}
  \begin{split}
    \label{eq:StrongKR-trivial-slicing-T}
    \xymatrix@C=15mm@R=8mm{
      A_i^+
      \ar[rd]^{\kappa_i}
      \ar[ddd]_{(\psi_i\circ\kappa_i)^\krho}
      \ar@{^{((}->}[rrr]
      &
      &
      &
      A^+
      \ar[ddd]^{(\psi\circ\kappa)^\krho}
      \ar[ld]_{\kappa}
      \\
      & S_i
      \ar[ldd]_(.4){\beta_i}
      \ar[d]^{\psi_i}
      \ar[r]^{\varphi_i}
      & S
      \ar[dl]^\psi
      \ar@{-->}[ddr]^{\beta}&\\
      & T
      &&
      \\
      **[r]{{T}_{\psi_i\circ\kappa_i}^\krho}
      \ar[rrr]^{\eta_i}
      \ar[ru]_(.6){\pi_{\psi_i\circ\kappa_i}}
      &
      &
      &
      **[r]{T_{\psi\circ\kappa}^\krho}
      \ar[ull]^{\pi_{\psi\circ\kappa}}
    }
  \end{split}  
\end{equation}
From the observation at the beginning of the proof, we know that
$T_{\psi\circ\kappa}^\krho$ belongs to~\pv V. Therefore, by the
definition of $\pv V$-coproduct, there is a unique continuous
homomorphism $\beta:S\to T_{\psi\circ\kappa}^\krho$ such that
$\beta\circ\varphi_i=\eta_i\circ\beta_i$ for each~$i\in I$, thus
completing Diagram~\eqref{eq:StrongKR-trivial-slicing-T}, which we
next show to be commutative. Indeed, using the already established
commutativity of the non-dashed part of
Diagram~\eqref{eq:StrongKR-trivial-slicing-T}, we see that for every
$i\in I$ and every $a\in A_i$, the following chain of equalities
holds:
  \begin{equation*}
    \beta\circ\kappa(a)=\beta\circ\varphi_i\circ\kappa_i(a)=
    \eta_i\circ\beta_i\circ\kappa_i(a)
    =\eta_i\circ(\psi_i\circ\kappa_i)^\krho(a)
    =(\psi\circ\kappa)^\krho(a).
  \end{equation*}
  This shows that $\beta\circ\kappa=(\psi\circ\kappa)^\krho$.
  In particular, we conclude that Diagram~\eqref{eq:conclusion-of-the-proof}
  \begin{equation}\label{eq:conclusion-of-the-proof}
    \begin{split}
      \xymatrix@C=14mm@R=11mm{
        A^+
        \ar[r]^{\kappa}
        \ar[d]_(.4){(\psi\circ\kappa)^\krho}
        &
        S
        \ar[d]^(.4){\psi}
        \ar@{-->}[dl]_{\beta}
        \\
        **[r]{{T}_{\psi\circ\kappa}^\krho}
        \ar[r]_(.6){\pi_{\psi\circ\kappa}}
        &T }
    \end{split}
  \end{equation}
  commutes, thereby completing the proof that $S$ is a
  strong KR-cover of the trivial semigroup~$T$.
\end{proof}

We are ready to deduce the following theorem.

\begin{Thm}\label{t:coproduct-StrongKR}
  For every pseudovariety of semigroups \pv V closed under two-sided
  Karnofsky--Rhodes expansion, the class of letter super-cancellative
  equidivisible finitely generated pro-$\pv V$ semigroups is closed
  under finite \pv V-coproducts.
\end{Thm}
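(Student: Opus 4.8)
The plan is to reduce the statement to the machinery of (strong) KR-covers by way of the characterization in Theorem~\ref{t:characterization-StrongKR}, exploiting the fact that the two halves appearing in Condition~\eqref{item:characterization-StrongKR-3}---being a KR-cover and being a strong KR-cover of the trivial semigroup---have each \emph{already} been shown to be preserved under finite $\pv V$-coproducts, by Theorem~\ref{t:coproduct-KR-covers} and Proposition~\ref{p:coproduct-StrongKR-trivial-covers} respectively. Concretely, let $(S_i)_{i\in I}$ be a finite family of finitely generated pro-$\pv V$ semigroups that are equidivisible and letter super-cancellative, and set $S=\coprod^{\pv V}_{i\in I}S_i$.

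First I would apply the implications \eqref{item:characterization-StrongKR-1} $\Rightarrow$ \eqref{item:characterization-StrongKR-2} $\Rightarrow$ \eqref{item:characterization-StrongKR-3} of Theorem~\ref{t:characterization-StrongKR} to each $S_i$: each $S_i$ is a strong KR-cover, hence in particular a KR-cover (Remark~\ref{rmk:StrongKR-vs-KR}) and a strong KR-cover of the trivial semigroup. Next I would record the two structural facts about $S$ that the closure results require. Since $I$ is finite and each $S_i$ is finitely generated, the union of finite generating sets of the $S_i$ maps onto a finite generating set of $S$, so $S$ is a finitely generated pro-$\pv V$ semigroup, whence a finitely generated profinite semigroup to which Theorem~\ref{t:characterization-StrongKR} may be applied. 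Moreover, by Theorem~\ref{t:characterization-of-pseudovarieties-closed-under-KR-expansion} the hypothesis on $\pv V$ means $\pv V=\pv{LI}\malcev\pv V$, and since every $R\in\pv{LI}$ admits the $\pv{LI}$-morphism onto the trivial semigroup (its unique idempotent having full preimage $R\in\pv{LI}$), we have $\pv{LI}\subseteq\pv{LI}\malcev\pv V=\pv V$; thus the hypotheses of both closure results are met.

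With this in place, the proof assembles directly. Because each $S_i$ is a pro-$\pv V$ KR-cover and $\pv V$ is closed under two-sided Karnofsky--Rhodes expansion, Theorem~\ref{t:coproduct-KR-covers} gives that $S$ is a KR-cover. Because each $S_i$ is a pro-$\pv V$ strong KR-cover of the trivial semigroup and $\pv{LI}\subseteq\pv V$, Proposition~\ref{p:coproduct-StrongKR-trivial-covers} gives that $S$ is a strong KR-cover of the trivial semigroup. Hence $S$ is a finitely generated profinite semigroup satisfying Condition~\eqref{item:characterization-StrongKR-3} of Theorem~\ref{t:characterization-StrongKR}, and the implication \eqref{item:characterization-StrongKR-3} $\Rightarrow$ \eqref{item:characterization-StrongKR-1} yields that $S$ is equidivisible and letter super-cancellative, completing the argument.

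Since the two nontrivial closure statements are already established, there is no genuinely hard analytic step; the work lies entirely in aligning the hypotheses. The point I expect to require the most care---and thus to be the main, if modest, obstacle---is verifying that $\pv V\supseteq\pv{LI}$ so that Proposition~\ref{p:coproduct-StrongKR-trivial-covers} is applicable, together with confirming that finite generation really is inherited by the coproduct, since it is precisely that hypothesis which licenses the final application of Theorem~\ref{t:characterization-StrongKR} to $S$ itself.
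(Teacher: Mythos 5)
Your proposal is correct and follows essentially the same route as the paper: both decompose the target class as the intersection of the pro-$\pv V$ KR-covers and the pro-$\pv V$ strong KR-covers of the trivial semigroup via Theorem~\ref{t:characterization-StrongKR}, and then invoke Theorem~\ref{t:coproduct-KR-covers} and Proposition~\ref{p:coproduct-StrongKR-trivial-covers} for closure of each piece. Your explicit checks that the coproduct is finitely generated and that $\pv{LI}\subseteq\pv V$ are details the paper passes over more quickly, but they are correct and welcome.
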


\begin{proof}
  Let $\mathcal C$ be the class of all pro-\pv V KR-covers, and let
  $\mathcal D$ be the class of all pro-\pv V strong KR-covers of the
  trivial semigroup. By
  Theorem~\ref{t:characterization-of-pseudovarieties-closed-under-KR-expansion},
  \pv V contains~\pv{LI}.\footnote{In fact, we only need that \pv V
    contains~\pv{LI} to invoke
    Proposition~\ref{p:coproduct-StrongKR-trivial-covers}, where the
    proof starts by observing that it follows that \pv V contains all
    two-sided Karnofsky--Rhodes expansions of the trivial semigroup.
    So, one could avoid applying
    Theorem~\ref{t:characterization-of-pseudovarieties-closed-under-KR-expansion}
    here. We preferred to keep the statement of
    Proposition~\ref{p:coproduct-StrongKR-trivial-covers} the simplest
    possible.}
  Both $\mathcal C$ and $\mathcal D$ are closed under
  taking finite \pv V-coproducts, by
  Theorem~\ref{t:coproduct-KR-covers} and
  Proposition~\ref{p:coproduct-StrongKR-trivial-covers}, respectively.
  By Theorem~\ref{t:characterization-StrongKR}, the class of letter
  super-cancellative equidivisible pro-$\pv V$ semigroups is the
  intersection $\mathcal C\cap\mathcal D$. Combining these
  observations, we immediately obtain the theorem.
\end{proof}

A natural question arising from Theorem~\ref{t:coproduct-StrongKR}
is the following problem, which we leave open.

\begin{Problem}
  Suppose that $\pv V$ is a pseudovariety of semigroups closed under
  two-sided Karnofsky--Rhodes expansion. Is it true that the class of
  all equidivisible pro-\pv V semigroups is closed under \pv
  V-coproducts? A perhaps simpler question is the following: is the
  class of all finitely generated equidivisible pro-\pv V semigroups
  closed under finite \pv V-coproducts?
\end{Problem}

It would also be interesting to have a complete characterization 
of the KR-covers, in the spirit  of the characterization
of the strong KR-covers, established in
Theorem~\ref{t:characterization-StrongKR}.

\end{document}